\numberwithin{equation}{section}
\theoremstyle{remark}
\newtheorem{rem}{\protect\remarkname}
\theoremstyle{plain}
\newtheorem{lem}{\protect\lemmaname}
\theoremstyle{definition}
\newtheorem{defn}{\protect\definitionname}
\theoremstyle{plain}
\newtheorem{thm}{\protect\theoremname}
\theoremstyle{definition}
 \newtheorem{example}{\protect\examplename}
\theoremstyle{plain}
\newtheorem{cor}{\protect\corollaryname}
\theoremstyle{plain}
\newtheorem{prop}{\protect\propositionname}
\providecommand{\corollaryname}{Corollary}
\providecommand{\definitionname}{Definition}
\providecommand{\examplename}{Example}
\providecommand{\lemmaname}{Lemma}
\providecommand{\propositionname}{Proposition}
\providecommand{\remarkname}{Remark}
\providecommand{\theoremname}{Theorem}
\begin{document}
\title[$L_{p}-$ solvability of integro-Differential equations]{\noindent on $L_{p}-$ theory for integro-differential operators
with spatially dependent coefficients }
\author{\noindent sutawas janreung, Tatpon siripraparat, and chukiat saksurakan{*}}
\begin{abstract}
The parabolic integro-differential Cauchy problem with spatially dependent
coefficients is considered in generalized Bessel potential spaces
where smoothness is defined by Lévy measures with O-regularly varying
profile. The coefficients are assumed to be bounded and Hölder continuous
in the spatial variable. Our results can cover interesting classes
of Lévy measures that go beyond those comparable to $dy/\left|y\right|^{d+\alpha}.$ 
\end{abstract}

\subjclass[2000]{{45K05, 60J75, 35B65.} }
\keywords{non-local parabolic equations, Lévy processes, Bessel potential space}
\address{\noindent Department of Social and Applied Science, College of Industrial
Technology, King Mongkut\textquoteright s University of Technology
North Bangkok, 1518 Pracharat 1 Road, Wongsawang, Bangsue, Bangkok
10800 Thailand.}
\email{sutawas.j@cit.kmutnb.ac.th}
\address{\noindent Department of Social and Applied Science, College of Industrial
Technology, King Mongkut\textquoteright s University of Technology
North Bangkok, 1518 Pracharat 1 Road, Wongsawang, Bangsue, Bangkok
10800 Thailand.}
\email{tatpon.s@cit.kmutnb.ac.th }
\address{Center of Sustainable Energy and Engineering Materials (SEEM), College
of Industrial Technology, King Mongkut\textquoteright s University
of Technology North Bangkok, 1518 Pracharat 1 Road, Wongsawang, Bangsue,
Bangkok 10800 Thailand.}
\email{chukiat.s@cit.kmutnb.ac.th}
\thanks{\noindent This research was funded by College of Industrial Technology,
King Mongkut's University of Technology North Bangkok (Grant No. Res-CIT320/2023)}
\thanks{{*}Corresponding author, email: chukiat.s@cit.kmutnb.ac.th}
\keywords{non-local parabolic integro-differential equations, Lévy processes,
spatially dependent variables}
\maketitle

\section{Introduction}

Let $\sigma\in\left(0,2\right)$ and $\mathfrak{A}^{\sigma}$ be the
class of all non-negative measures $\nu$ on $\mathbf{R}_{0}^{d}=\mathbf{R}^{d}\backslash\left\{ 0\right\} $
such that $\int\left(\left\vert y\right\vert ^{2}\wedge1\right)\nu\left(dy\right)<\infty$
and 
\[
\sigma=\inf\left\{ \alpha<2:\int_{\left\vert y\right\vert \leq1}\left\vert y\right\vert ^{\alpha}\nu\left(dy\right)<\infty\right\} .
\]

Let $\nu\in\mathfrak{A}^{\sigma}$, we consider the non-local parabolic
Cauchy problem with spatially dependent coefficients, 
\begin{eqnarray}
du\left(t,x\right) & = & \left[L^{m,\nu}u\left(t,x\right)-\lambda u\left(t,x\right)+f\left(t,x\right)\right]dt\label{eq:mainEq}\\
u\left(0,x\right) & = & 0,\left(t,x\right)\in E=\left[0,T\right]\times\mathbf{R}^{d}\nonumber 
\end{eqnarray}
with $\lambda\geq0$ and integro-differential operator $L^{m,\nu}\varphi\left(x\right)=L_{t,x}^{m,\nu}\varphi\left(x\right)=L_{t,z}^{m,\nu}|_{z=x}\varphi\left(x\right)$
where $L_{t,z}^{m,\nu}$ is given by
\[
L=L_{t,z}^{m,\nu}=\int\left[\varphi\left(x+y\right)-\varphi\left(x\right)-\chi_{\sigma}\left(y\right)y\cdot\nabla\varphi\left(x\right)\right]m\left(t,z,y\right)\nu\left(dy\right),
\]
for $\varphi\in C_{0}^{\infty}\left(\mathbf{R}^{d}\right)$ and $\chi_{\sigma}\left(y\right)=0$
if $\sigma\in\left[0,1\right),\chi_{\sigma}\left(y\right)=1_{\left\{ \left\vert y\right\vert \leq1\right\} }\left(y\right)$
if $\sigma=1$ and $\chi_{\sigma}\left(y\right)=1$ if $\sigma\in\left(1,2\right).$ 

\medskip

For $\nu\in\mathfrak{A}^{\sigma}$, we define its radial distribution
function 
\[
\delta\left(r\right)=\delta_{\nu}\left(r\right)=\nu\left(x\in\mathbf{R}^{d}:\left\vert x\right\vert >r\right),r>0,
\]
and

\[
w\left(r\right)=w_{\nu}\left(r\right)=\delta_{\nu}\left(r\right)^{-1},r>0.
\]

Let $w$ be continuous and positive. Our main assumption is that $w_{\nu}\left(r\right)$
is an O-RV function at both infinity and at zero. That is 
\[
r_{1}\left(x\right)=\limsup_{\epsilon\rightarrow0}\frac{w_{\nu}\left(\epsilon x\right)}{w_{\nu}\left(\epsilon\right)}<\infty,\hspace{1em}r_{2}\left(x\right)=\limsup_{\epsilon\rightarrow\infty}\frac{w_{\nu}\left(\epsilon x\right)}{w_{\nu}\left(\epsilon\right)}<\infty,x>0.
\]

\noindent The regular variation functions were introduced in \cite{Kar}
and used in tauberian theorems which were also extended to O-RV functions
(e.g. \cite{AA,BGT} and references therein.) This assumption yields
several nice properties regarding asymptotic integrals and scaling
of $w_{\nu}$ (see Appendix) which are convenient for verifying Hörmander
conditions in Calderón--Zygmund theory of singular integrals. It
is worth noting that this type of scaling properties are widely used
in estimates of heat kernel associated to Markov processes (e.g. \cite{KR,CKK})
and also in relevant literature studying (\ref{eq:mainEq}) which
we now elaborate. 

The $L_{p}-$regularity for (\ref{eq:mainEq}) when $\nu\left(dy\right)$
is comparable to $dy/\left|y\right|^{d+\alpha}$ and $m$ is not a
constant was first studied by \cite{MPr92} in which $m$ is Hölder
continuous in $x$, homogeneous of order $0$ and smooth in $y$.
In \cite{DK12}, the authors studied the elliptic problem when $m\left(y\right)$
is independent of the spatial variable. They showed the boundedness
of $L^{\nu}$ by using Hölder norm estimates and only required $m\left(y\right)$
to be bounded and measurable. Later in \cite{MPr14}, the authors
of \cite{MPr92} weaken their assumptions and requires regularity
for $y$ to hold for a lower-bound coefficient $m_{0}\left(t,y\right)\leq m\left(t,x,y\right)$
instead of $m$. Their results cover $p>d/\beta$ where $\beta\in\left(0,1\right)$
is the order of the modulus of continuity (see the definition in Assumption
\textbf{C} below.) A related martingale problem was also studied.
The restriction to large $p$ stems directly from Sobolev embedding
theorem. Later, results in \cite{MPr14} were improved by \cite{DK}
to cover all $p>1$ and weighted $L_{p}$ spaces based on the Rubio
de Francia extrapolation theorem and weighted continuity of operator
$L^{\nu}$. We also mention that a relevant result for all $p>1$
was derived in \cite{Zha12} where less regularity namely $\beta=0$
is required in $x$, however, the modulus of continuity is also required
in $y$. 

From a broader perspective, there has been considerable research on
(\ref{eq:mainEq}) when $m$ is a constant with more a generalized
measure $\nu\left(dy\right)$ other than $dy/\left|y\right|^{d+\alpha}$.
As an example, a measure $\nu\in$$\mathfrak{A}$$^{\sigma}$of the
following form defined in radial and angular coordinates $r=\left\vert y\right\vert ,w=y/r,$
as 
\begin{equation}
\nu\left(\Gamma\right)=\int_{0}^{\infty}\int_{\left\vert z\right\vert =1}\chi_{\Gamma}\left(rz\right)a\left(r,z\right)j\left(r\right)r^{d-1}S\left(dz\right)dr,\Gamma\in\mathcal{B}\left(\mathbf{R}_{0}^{d}\right),\label{ex0}
\end{equation}
where $S\left(dz\right)$ is a finite measure on the unit sphere on
$\mathbf{R}^{d}$ has been investigated. Specifically, the case of
$a=1,j\left(r\right)=r^{-d-\sigma}$ with relatively general non-degenerate
$S$ has been studied in \cite{Zha13} while the case of $S\left(dz\right)=dz$
being a Lebesgue measure on the unit sphere in $\mathbf{R}^{d}$ with
some technical assumptions on more general $j\left(r\right)$ has
been studied in \cite{KK16}. More recently, an anisotropic measure
with potentially different orders of the following form with some
technical assumptions on $j_{i}\left(y_{i}\right)$ was studied in
\cite{CKP},
\begin{equation}
\nu\left(dy\right)=\sum_{i=1}^{d}j_{i}\left(y_{i}\right)dy_{i}\epsilon_{0}^{i}\left(dy_{1},...,dy_{i-1},dy_{i+1},...,dy_{d}\right)\label{ex1}
\end{equation}

\noindent where $\epsilon_{0}^{i}$ is the Dirac measure in $\mathbf{R}^{d-1}$
concentrated at the center. 

The authors of \cite{MPh3} has proposed a set of assumptions that
can cover (\ref{ex0}) when both $j\left(r\right)$ can be more general
than $r^{-d-\sigma}$ and $S\left(dz\right)$ can be more general
than a Lebesgue measure. To the best of our knowledge this set of
assumptions remain the only one that satisfy such requirement. These
assumptions can also cover a partial case of (\ref{ex1}) when $j_{i}$
are the same for all $i$ such as when $j_{i}\left(y_{i}\right)=\left|y_{i}\right|^{-1-\alpha}$(see
Example \ref{example:aniso} below.)

There has been limited research on (\ref{eq:mainEq}) with generalized
measure $\nu\left(dy\right)$ with spatially dependent coefficient
$m\left(t,x,y\right)$. As far as we know, even in the case that $m$
only depends on $t$ and $y$ only \cite{KKK} deal with non $\alpha-$stable
measures. In fact, they allow $\nu_{t}\left(dy\right)$ to depend
on time directly. Motivated by this gap in research we investigate
(\ref{eq:mainEq}) with the set assumptions from \cite{MPh3} due
to its relative generality (see Examples.) We adapt assumptions and
some methods from \cite{MPr92,DK} using freezing coefficient and
continuation by parameter arguments. However, since $\nu\left(dy\right)$
is more general than in our case we need a more generalized analysis
on our transitional probabilities. Our result is new even when $m$
only depends on $t$ and $y$. 

We would like to mention another direction of research which studies
(\ref{eq:mainEq}) with the Caputo derivative $\partial_{t}^{\alpha}$
for some $\alpha\in\left(0,1\right)$ instead of the standard derivative
for the time variable. The reader may consult \cite{KP,DL23} regarding
this topic. 

\section{Notations, Assumptions, Main Results, Examples}

\subsection{Notations}

\noindent The following notation will be used in the paper.
\begin{itemize}
\item We write $E=\left[0,T\right]\times\mathbf{R}^{d}.$ 
\item We write $f\asymp g$ if there exists $c>0$ so that $c^{-1}g\left(x\right)\leq f\left(x\right)\leq cg\left(x\right)$
for all $x$ in the domain of $f$ and $g$. We will also use $f\preceq g$
(resp. $f\succeq g$) when $f\leq cg$ (resp. $f\geq cg$) with some
$c>0$ for all $x$ in the domain of $f,g.$ 
\item For any real-valued function $f$, we denote $f^{+}=f\vee0,f^{-}=f\wedge0.$
\item Let $\mathbf{N}=\{1,2,\ldots\},\mathbf{N}_{0}=\left\{ 0,1,\ldots\right\} ,\mathbf{R}_{0}^{d}=\mathbf{R}^{d}\backslash\{0\}.$
If $x,y\in\mathbf{R}^{d},$\ we write 
\[
x\cdot y=\sum_{i=1}^{d}x_{i}y_{i},\,|x|=\sqrt{x\cdot x}.
\]
\item We denote by $C_{0}^{\infty}\left(\mathbf{R}^{d}\right)$ the set
of all infinitely differentiable functions on $\mathbf{R}^{d}$ with
compact support. As usual, we denote $\mathcal{S}\left(\mathbf{R}^{d}\right)$
the Schwartz function space on $\mathbf{R}^{d}$. 
\item We denote the partial derivatives in $x$ of a function $u\left(t,x\right)$
on $\mathbf{R}^{d+1}$ by $\partial_{i}u=\partial u/\partial x_{i}$,
$\partial_{ij}^{2}u=\partial^{2}u/\partial x_{i}\partial x_{j}$,
etc.; $Du=\nabla u=\left(\partial_{1}u,\ldots,\partial_{d}u\right)$
denotes the gradient of $u$ with respect to $x$; for a multiindex
$\gamma\in\mathbf{N}_{0}^{d}$ we denote 
\[
D_{x}^{\gamma}u\left(t,x\right)=\frac{\partial^{|\gamma|}u\left(t,x\right)}{\partial x_{1}^{\gamma_{1}}\ldots\partial x_{d}^{\gamma_{d}}}.
\]
If $\left|\gamma\right|=1$, we use $D_{x}u\left(t,x\right)$ to denote
any first order derivative.
\item For a function one variable we denote by $f^{\left(j\right)}$ the
$j$-th order derivative of $f.$
\item For $\alpha\in\left(0,2\right]$ and a function $u\left(t,x\right)$
on $\mathbf{R}^{d+1}$, we write 
\[
\partial^{\alpha}u\left(t,x\right)=-\mathcal{F}^{-1}\left[|\xi|^{\alpha}\mathcal{F}u\left(t,\xi\right)\right]\left(x\right),
\]
where the Fourier transform and the inverse Fourier transform are
given by
\begin{align*}
\mathcal{F}h\left(t,\xi\right) & =\hat{h}\left(\xi\right)=\int_{\mathbf{R}^{d}}\mathrm{e}^{-i2\pi\xi\cdot x}h\left(t,x\right)dx,\\
\mathcal{F}^{-1}h\left(t,\xi\right) & =\int_{\mathbf{R}^{d}}\mathrm{e}^{i2\pi\xi\cdot x}h\left(t,\xi\right)d\xi,
\end{align*}
respectively.
\item For a radial function $f:\mathbf{R}^{d}\rightarrow\mathbf{R}$, we
write with a slight abuse of notation $f\left(y\right)=f\left(\left|y\right|\right)=f\left(r\right)$
when $\left|y\right|=r.$
\item We shall write throughout this paper that $L^{\nu}=L^{1,\nu}$ for
the case of $m=1.$
\item For $\nu\in$$\mathfrak{A}$$^{\sigma}$, we denote $Z_{t}^{\nu},t\geq0,$
the Lévy process associated to $L^{\nu}$, i.e., $Z_{t}^{\nu}$ is
cadlag with independent increments and its characteristic function
\[
\mathbf{E}e^{i2\pi\xi\cdot Z_{t}^{\nu}}=\exp\left\{ \psi^{\nu}\left(\xi\right)t\right\} ,\xi\in\mathbf{R}^{d},t\geq0.
\]
\item The letters $C=C\left(\cdot,\ldots,\cdot\right),N=N\left(\cdot,...,\cdot\right)$
and $c=c\left(\cdot,\ldots,\cdot\right)$ with and without subscriptions
denote constants depending only on quantities appearing in parentheses.
In a given context the same letter will generally be used to denote
different constants depending on the same set of arguments.
\item We will also use these transformations of $\nu$ and $m$.

\noindent (i) $\nu^{\ast}\left(dy\right)=\nu\left(-dy\right)$ and
$m^{\ast}\left(t,y\right)=m\left(t,-y\right),t\geq0,y\in\mathbf{R}^{d}$

\noindent (ii) $\nu_{sym}\left(dy\right)=\frac{\nu\left(dy\right)+\nu\left(-dy\right)}{2},$
a symmetrization of $\nu$

\noindent (iii) $\tilde{\nu}_{R}\left(dy\right)=w_{\nu}\left(R\right)\nu\left(Rdy\right),R>0$

\noindent (iv) $m_{R}\left(t,y\right)=m\left(t,Ry\right),t\geq0,R>0$
\end{itemize}

\subsection{Assumptions }

\noindent We will study (\ref{eq:mainEq}) under the same set of assumptions
from \cite{MPh3} due to its relative generality which we justify
in the next section (see Examples.) 

We set for $\nu\in\mathfrak{A}^{\sigma}$ 
\[
\delta\left(r\right)=\delta_{\nu}\left(r\right)=\nu\left(x\in\mathbf{R}^{d}:\left|x\right|>r\right),r>0
\]

\noindent 
\[
w\left(r\right)=w_{\nu}\left(r\right)=\delta_{\nu}\left(r\right)^{-1},r>0.
\]

Our main assumption is that $w=w_{\nu}\left(r\right)$ is an O-RV
function at both infinity and at zero. That is 
\begin{equation}
r_{1}\left(x\right)=\limsup_{\delta\rightarrow0}\frac{w\left(\delta x\right)}{w\left(\delta\right)}<\infty,\hspace{1em}r_{2}\left(x\right)=\limsup_{\delta\rightarrow\infty}\frac{w\left(\delta x\right)}{w\left(\delta\right)}<\infty,x>0.\label{eq:def_r}
\end{equation}

Due to Theorem 2 in \cite{AA}, the following limit exist: 
\begin{equation}
p_{1}=p_{1}^{w}=\lim_{\delta\rightarrow0}\frac{\log r_{1}\left(\delta\right)}{\log\delta}\leq q_{1}=q_{1}^{w}=\lim_{\delta\rightarrow\infty}\frac{\log r_{1}\left(\delta\right)}{\log\left(\delta\right)}\label{1}
\end{equation}
and 
\begin{equation}
p_{2}=p_{2}^{w}=\lim_{\delta\rightarrow0}\frac{\log r_{2}\left(\delta\right)}{\log\delta}\leq q_{2}=q_{2}^{w}=\lim_{\delta\rightarrow\infty}\frac{\log r_{2}\left(\delta\right)}{\log\left(\delta\right)}.\label{2}
\end{equation}
Note that $p_{1}\leq\sigma\leq q_{1}$ (see \cite{MF}.) We assume
throughout this paper that $p_{1},p_{2},q_{1},q_{2}>0$. The numbers
$p_{1},p_{2}$ are called lower indices and $q_{1},q_{2}$ are called
upper indices of O-RV function. When there is no chance of confusion,
for a function $f$ which is both O-RV at zero and infinity, we always
denote its lower and upper indices at zero by $p_{1},q_{1}$ respectively,
and its lower and upper indices at infinity by $p_{2},q_{2}$ respectively.
We also write $r_{1},r_{2}$ as given in (\ref{eq:def_r}). We denote
by $p_{1}^{f},p_{2}^{f},q_{1}^{f},q_{2}^{f},r_{1}^{f},r_{2}^{f}$
when an explicit reference to $f$ is needed. 

We assume throughout this paper without further mentioning that for
$\nu\in\mathfrak{A}^{1}$ ($\sigma=1$) 
\begin{eqnarray*}
\int_{r<\left\vert y\right\vert \leq R}y\nu\left(dy\right) & = & 0\text{ for all }0<r<R<\infty,\text{ }
\end{eqnarray*}

\noindent and the coefficient $m$ is always measurable. 

\noindent Our main assumptions are

\medskip

\noindent \textbf{Assumption A.}

\noindent For $i=1,2$ 
\begin{eqnarray*}
0 & < & p_{i}\leq q_{i}<1\text{ if }\sigma\in\left(0,1\right),0<p_{i}\leq1\leq q_{i}<2\text{ if }\sigma=1,\\
1 & < & p_{i}\leq q_{i}<2\text{ if \ensuremath{\sigma}}\in\left(1,2\right).
\end{eqnarray*}

\medskip

\noindent \textbf{Assumption B.}
\[
\inf_{R\in\left(0,\infty\right),\left\vert \hat{\xi}\right\vert =1}\int_{\left\vert y\right\vert \leq1}\left\vert \hat{\xi}\cdot y\right\vert ^{2}\tilde{\nu}_{R}\left(dy\right)>0.
\]

\medskip

\noindent The next assumption will be used for the coefficient $m=m\left(t,x,y\right),\left(t,x\right)\in E,y\in\mathbf{R}^{d}$.
We shall assume that $m$ is bounded and Hölder continuous with the
modulus $\kappa$ in the following sense: 

\medskip

\noindent \textbf{Assumption C. }

\noindent (i) For all $x,y\in\mathbf{R}^{d}$ and $t\in\left[0,T\right]$,
\[
0<k\leq m\left(t,x,y\right)\leq K.
\]

\noindent (ii) There exists $\beta\in\left(0,1\right)$ and a continuous
increasing function $\kappa\left(\tau\right),\tau>0$ such that 
\[
\left|m\left(t,x_{1},y\right)-m\left(t,x_{2},y\right)\right|\leq\kappa\left(\left|x_{1}-x_{2}\right|\right)
\]

\noindent for $t\in\left[0,T\right],x_{1},x_{2},y\in\mathbf{R}^{d}$
and 

\noindent 
\[
\int_{\left|y\right|\leq1}\kappa\left(\left|y\right|\right)\frac{dy}{\left|y\right|^{d+\beta}}<\infty.
\]

\noindent In addition if $\sigma=1$ then for any $\left(t,x\right)\in E$
and $0<r<R<\infty,$

\noindent 
\[
\int_{r\leq\left|y\right|\leq R}ym\left(t,x,y\right)\nu\left(dy\right)=0.
\]

\medskip
\begin{rem}
Assumption \textbf{C }implies $\lim_{\delta\rightarrow0}\kappa\left(\delta\right)\delta^{-\beta}=0$
(see \cite[Remark 2.2]{DK}.)
\end{rem}
\begin{rem}
\noindent We will pay attention to a special case when $\nu$ is an
isotropic unimodal measure namely $\nu\left(dy\right)=j_{d}\left(\left|y\right|\right)dy$
where $j_{d}\left(r\right)\asymp r^{-d}\gamma\left(r\right)^{-1},r>0$
and $\gamma$ is an O-RV function. We note that in this case Assumption
$\textbf{B}$ can be easily verified. To exploit gradient estimates
of transitional probability provided in \cite[Theorem 1.5]{KR}, we
will impose the following assumption on $j_{d}$. 

\medskip
\end{rem}
\noindent \textbf{Assumption D.} $j_{d}:\left(0,\infty\right)\rightarrow\left(0,\infty\right)$
is differentiable and 

\noindent (i) $j_{d}\left(r\right)$ and $-\frac{1}{r}j_{d}^{\left(1\right)}\left(r\right)$
are non-increasing.

\noindent (ii) There exists a non-decreasing continuous O-RV function
$\gamma:\left(0,\infty\right)\rightarrow\left(0,\infty\right)$ satisfying
Assumption \textbf{A} and constants $c_{1},c_{2}>0$ such that for
$n=0,1$
\[
c_{1}r^{-d-n}\gamma\left(r\right)^{-1}\leq\left(-1\right)^{j}j_{d}^{\left(n\right)}\left(r\right)\leq c_{2}r^{-d-n}\gamma\left(r\right)^{-1},r>0.
\]

We now mention a useful Lemma regarding moment estimates of $\nu$
with scaling.

\medskip
\begin{lem}
\label{lem:alpha12_integral}(\cite[Lemma 4]{MPh3}) Let $\nu\in\mathfrak{A}^{\sigma}$,$w=w_{\nu}$
be an O-RV at zero and infinity with indices $p_{1},q_{1},p_{2},q_{2}$
defined in (\ref{1}), (\ref{2}). Then for any $\alpha_{1}>q_{1}\vee q_{2}$
and $0<\alpha_{2}<p_{1}\wedge p_{2}$, there is $C=C\left(w,\alpha_{1},\alpha_{2}\right)>0$
such that 
\[
\int_{\left\vert y\right\vert \leq1}\left\vert y\right\vert ^{\alpha_{1}}\tilde{\nu}_{R}\left(dy\right)+\int_{\left\vert y\right\vert >1}\left\vert y\right\vert ^{\alpha_{2}}\tilde{\nu}_{R}\left(dy\right)\leq C,R>0,
\]

\noindent where $\tilde{\nu}_{R}\left(dy\right)=w_{\nu}\left(R\right)\nu\left(Rdy\right),R>0.$
\end{lem}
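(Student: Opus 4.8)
The plan is to undo the rescaling in $\tilde{\nu}_{R}$, apply Fubini--Tonelli in the form of the ``layer cake'' identity to express the two integrals through the radial profile $w=w_{\nu}$, and then close the estimate with the uniform (in $R$) scaling bounds for O-RV functions collected in the Appendix. First I would change variables $z=Ry$ to write
\[
\int_{\left|y\right|\leq1}\left|y\right|^{\alpha_{1}}\tilde{\nu}_{R}\left(dy\right)=\frac{w\left(R\right)}{R^{\alpha_{1}}}\int_{\left|z\right|\leq R}\left|z\right|^{\alpha_{1}}\nu\left(dz\right),\qquad\int_{\left|y\right|>1}\left|y\right|^{\alpha_{2}}\tilde{\nu}_{R}\left(dy\right)=\frac{w\left(R\right)}{R^{\alpha_{2}}}\int_{\left|z\right|>R}\left|z\right|^{\alpha_{2}}\nu\left(dz\right),
\]
so that the claim reduces to the two bounds $\int_{\left|z\right|\leq R}\left|z\right|^{\alpha_{1}}\nu\left(dz\right)\preceq R^{\alpha_{1}}/w\left(R\right)$ and $\int_{\left|z\right|>R}\left|z\right|^{\alpha_{2}}\nu\left(dz\right)\preceq R^{\alpha_{2}}/w\left(R\right)$, uniformly for $R>0$.

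Next, using $\left|z\right|^{\alpha_{1}}=\alpha_{1}\int_{0}^{\left|z\right|}s^{\alpha_{1}-1}\,ds$ and, for $\left|z\right|>R$, $\left|z\right|^{\alpha_{2}}=R^{\alpha_{2}}+\alpha_{2}\int_{R}^{\left|z\right|}s^{\alpha_{2}-1}\,ds$, Tonelli's theorem together with $\nu\left(\left\{ \left|z\right|>s\right\} \right)=\delta_{\nu}\left(s\right)=1/w\left(s\right)$ gives
\[
\int_{\left|z\right|\leq R}\left|z\right|^{\alpha_{1}}\nu\left(dz\right)\leq\alpha_{1}\int_{0}^{R}\frac{s^{\alpha_{1}-1}}{w\left(s\right)}\,ds,\qquad\int_{\left|z\right|>R}\left|z\right|^{\alpha_{2}}\nu\left(dz\right)=\frac{R^{\alpha_{2}}}{w\left(R\right)}+\alpha_{2}\int_{R}^{\infty}\frac{s^{\alpha_{2}-1}}{w\left(s\right)}\,ds.
\]
I would then substitute $s=Rt$ in each integral, so the matter becomes showing that $\int_{0}^{1}t^{\alpha_{1}-1}\,w\left(R\right)/w\left(Rt\right)\,dt$ and $\int_{1}^{\infty}t^{\alpha_{2}-1}\,w\left(R\right)/w\left(Rt\right)\,dt$ are bounded uniformly in $R$. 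For the first, since $\alpha_{1}>q_{1}\vee q_{2}$ one picks $\varepsilon>0$ with $q_{1}\vee q_{2}+\varepsilon<\alpha_{1}$ and uses the upper O-RV scaling bound $w\left(R\right)/w\left(Rt\right)\leq Ct^{-\left(q_{1}\vee q_{2}+\varepsilon\right)}$ valid for $0<t\leq1$ and all $R>0$; the exponent $\alpha_{1}-1-\left(q_{1}\vee q_{2}+\varepsilon\right)>-1$ makes the $t$-integral finite. For the second, since $0<\alpha_{2}<p_{1}\wedge p_{2}$ one picks $\varepsilon<p_{1}\wedge p_{2}-\alpha_{2}$ and uses the lower O-RV scaling bound $w\left(R\right)/w\left(Rt\right)\leq Ct^{-\left(p_{1}\wedge p_{2}-\varepsilon\right)}$ for $t\geq1$ and all $R>0$; now $\alpha_{2}-1-\left(p_{1}\wedge p_{2}-\varepsilon\right)<-1$, so the integral over $\left[1,\infty\right)$ converges. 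Plugging these back proves the two reduced bounds, hence the lemma.

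The one point requiring care is the \emph{uniformity in $R$} of the two scaling inequalities for $w$. The Potter-type bounds for an O-RV function are intrinsically local statements (valid only as the argument tends to $0$, respectively to $\infty$), whereas here $R$ and $Rt$ range over all of $\left(0,\infty\right)$ and may straddle the ``bulk'' interval $\left[\delta_{0},X_{0}\right]$ on which $w$ is, by continuity and positivity, trapped between two positive constants. Patching the small-, bulk-, and large-scale regimes to upgrade the local Potter bounds into the global estimates $w\left(R\right)/w\left(Rt\right)\preceq t^{-\left(q_{1}\vee q_{2}+\varepsilon\right)}$ for $t\leq1$ and $w\left(R\right)/w\left(Rt\right)\preceq t^{-\left(p_{1}\wedge p_{2}-\varepsilon\right)}$ for $t\geq1$ is exactly what the O-RV scaling lemmas of the Appendix provide, so I expect that to be the main, though essentially routine, obstacle.
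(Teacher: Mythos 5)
Your argument is correct and complete. The paper itself gives no proof of this lemma -- it is imported verbatim as \cite[Lemma 4]{MPh3} -- so there is nothing internal to compare against, but your route (undo the scaling, layer-cake the moments into $\int s^{\alpha_1-1}\delta_\nu(s)\,ds$ and $\int s^{\alpha_2-1}\delta_\nu(s)\,ds$, substitute $s=Rt$, and absorb $w(R)/w(Rt)$ by Potter-type bounds) is the natural one and is exactly in the spirit of the source. The one issue you flag, uniformity in $R$ of the scaling bounds, is not actually an obstacle here: Lemma \ref{lem:alpha12_ratio} in the Appendix (i.e.\ \cite[Lemma 1]{MPh3}) already gives the \emph{global} two-sided estimate $c_{1}\left(y/x\right)^{\alpha_{2}'}\leq w\left(y\right)/w\left(x\right)\leq c_{2}\left(y/x\right)^{\alpha_{1}'}$ for all $0<x\leq y<\infty$, for any $\alpha_{1}'>q_{1}\vee q_{2}$ and $0<\alpha_{2}'<p_{1}\wedge p_{2}$ (using $p_{1},p_{2}>0$, which the paper assumes throughout, together with continuity and positivity of $w$), so no separate patching of small, bulk and large scales is needed -- just choose $\alpha_{1}'\in\left(q_{1}\vee q_{2},\alpha_{1}\right)$ and $\alpha_{2}'\in\left(\alpha_{2},p_{1}\wedge p_{2}\right)$ and your two $t$-integrals converge uniformly in $R$, with the boundary term $R^{\alpha_{2}}\delta_\nu\left(R\right)$ contributing exactly $1$ after rescaling.
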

With Lemma \ref{lem:alpha12_integral} in mind, we also introduce
a notation for an auxiliary measure $\pi\in\mathfrak{A}^{\sigma}$
whose scaling is controlled by $\nu\in\mathfrak{A}^{\sigma}$ (with
the same $\sigma$.)

Let

\noindent 
\[
\alpha_{1}>q_{1}^{w_{\nu}}\vee q_{2}^{w_{\nu}},0<\alpha_{2}<p_{1}^{w_{\nu}}\wedge p_{2}^{w_{\nu}},
\]
and 
\begin{eqnarray*}
\alpha_{2} & > & 1\text{ if }\sigma\in\left(1,2\right),\\
\alpha_{1} & \leq & 1\text{ if }\sigma\in\left(0,1\right),\alpha_{1}\leq2\text{ if }\sigma\in\left[1,2\right).
\end{eqnarray*}

\noindent if for some $M>0$,
\[
\int_{\left\vert y\right\vert \leq1}\left\vert y\right\vert ^{\alpha_{1}}\tilde{\pi}_{R}\text{\ensuremath{\left(dy\right)}}+\int_{\left\vert y\right\vert >1}\left\vert y\right\vert ^{\alpha_{2}}\tilde{\pi}_{R}\left(dy\right)\leq M,R>0.
\]

\noindent Then we write 
\[
\pi\prec\nu\left(M,\alpha_{1},\alpha_{2}\right)
\]
 and simply $\pi\prec\nu$ if $\pi\prec\nu\left(M,\alpha_{1},\alpha_{2}\right)$
for some $M,\alpha_{1},\alpha_{2}$. 

\noindent 

\subsection{Function Spaces and Main Results}

It is well known at this point that in order to capture optimal regularity
consistent with the classical theory of non-degenerate parabolic equations,
one need to study (\ref{eq:mainEq}) in a generalized Bessel potential
space defined with respect to the symbol of $L^{\nu}$ itself. As
far as we know, these spaces were first used for a non-local parabolic
equation in \cite{MPh3} where some basic properties and characterizations
on the scale were also rigorously investigated. Similar generalized
spaces were also later used in \cite{KP,CKP,KK23} under varying assumptions
about $\nu$. 

We use standard notation $\mathcal{S}\left(\mathbf{R}^{d}\right)$
for the Schwartz space of real valued rapidly decreasing functions,
and $L_{p}=L_{p}\left(\mathbf{R}^{d}\right),p\in\left[1,\infty\right)$
for the space of locally integrable functions with the finite norm
\[
\left|f\right|_{L_{p}}=\left(\int\left|f\left(x\right)\right|^{p}dx\right)^{1/p}.
\]

On $E=\left[0,T\right]\times\mathbf{R}^{d}$, we write $L_{p}\left(E\right)$
the space of locally integrable functions with the finite norm 
\[
\left(\int_{0}^{T}\left|f\right|_{L_{p}}^{p}dt\right)^{1/p}.
\]

Let 
\[
Jf=J_{\nu}f=\left(I-L^{\nu_{sym}}\right)f=f-L^{\nu_{sym}}f,f\in\mathcal{S}\left(\mathbf{R}^{d}\right).
\]
For $s\in\mathbf{R}$ set 
\[
J^{s}f=\left(I-L^{\nu_{sym}}\right)^{s}f=\mathcal{F}^{-1}\left[\left(1-\psi^{\nu_{sym}}\right){}^{s}\hat{f}\right],f\in\mathcal{S}\left(\mathbf{R}^{d}\right).
\]

\noindent 
\[
L^{\nu;s}f=\mathcal{F}^{-1}\left(-\left(-\psi^{\nu_{sym}}\right)^{s}\hat{f}\right),f\in\mathcal{S}\left(\mathbf{R}^{d}\right).
\]

\noindent Note that $L^{\nu;1}f=L^{\nu_{sym}}f,f\in\mathcal{S}\left(\mathbf{R}^{d}\right).$

For $p\in\left[1,\infty\right),s\in\mathbf{R,}$ we define, following
\cite{FJS}, the Bessel potential space $H_{p}^{\nu;s}=H_{p}^{\nu;s}\left(\mathbf{R}^{d}\right)$
as the closure of $\mathcal{S}\left(\mathbf{R}^{d}\right)$ in the
norm 
\begin{eqnarray*}
\left\vert f\right\vert _{H_{p}^{\nu;s}} & = & \left\vert J^{s}f\right\vert _{L_{p}}=\left\vert \mathcal{F}^{-1}\left[\left(1-\psi^{\nu_{sym}}\right){}^{s}\hat{f}\right]\right\vert _{L_{p}}\\
 & = & \left\vert \left(I-L^{\nu_{sym}}\right)^{s}f\right\vert _{L_{p}},f\in\mathcal{S}\left(\mathbf{R}^{d}\right).
\end{eqnarray*}

Clearly, $H_{p}^{\nu;0}\left(\mathbf{R}^{d}\right)=L_{p}\left(\mathbf{R}^{d}\right)$
and $H_{p}^{\nu;s}$ is a Banach space.
\begin{rem}
\noindent \label{rem:gen_bessel} We comment on some basic facts regarding
$H_{p}^{\nu;s}.$

\noindent (i) By \cite[Theorem 2.3.1]{FJS}, if $p\in\left(1,\infty\right),s<t$
then $H_{p}^{\nu;t}\left(\mathbf{R}^{d}\right)\subseteq H_{p}^{\nu;s}\left(\mathbf{R}^{d}\right)$
is continuously embedded. 

\noindent (ii) By \cite[Theorem 2.2.7]{FJS}, for $s\geq0,p\in\left[1,\infty\right),$
the norm $\left\vert f\right\vert _{H_{p}^{\nu;s}}$ is equivalent
to 
\begin{align*}
\left\vert f\right\vert _{H_{p}^{\nu;s}} & =\left\vert f\right\vert _{L_{p}}+\left\vert \mathcal{F}^{-1}\left[\left(-\psi^{\nu_{sym}}\right){}^{s}\mathcal{F}f\right]\right\vert _{L_{p}}\\
 & =\left\vert f\right\vert _{L_{p}}+\left\vert L^{\nu;s}f\right\vert _{L_{p}}.
\end{align*}
\end{rem}
We define $H_{p}^{\nu;s}\left(E\right)$ the space of locally integrable
functions with the finite norm, 
\[
\left\vert f\right\vert _{H_{p}^{\nu;s}\left(E\right)}=\left(\int_{0}^{T}\left|f\right|_{H_{p}^{\nu;s}}^{p}\right)^{1/p}.
\]

For the special case where $s=1$, we denote for simplicity of notation
$H_{p}^{\nu}=H_{p}^{\nu;1}$ and respectively $H_{p}^{\nu}\left(E\right)=H_{p}^{\nu;1}\left(E\right)$.
In this case, $\left\vert f\right\vert _{H_{p}^{\nu}}=\left\vert f\right\vert _{L_{p}}+\left\vert L^{\nu_{sym};1}f\right\vert _{L_{p}}.$
As we will show later in Lemma \ref{lem:(Continuity-of-Operators)}
(see also Remark \ref{rem:norm_equi}) that under Assumptions \textbf{A}
and \textbf{B}, $\left\vert L^{\nu_{sym};1}f\right\vert _{L_{p}}\asymp\left\vert L^{\nu}f\right\vert _{L_{p}}$
which implies that the norm $\left\vert f\right\vert _{H_{p}^{\nu}}$
is in fact equivalent to 
\[
\left\vert f\right\vert _{H_{p}^{\nu}}=\left\vert f\right\vert _{L_{p}}+\left\vert L^{\nu}f\right\vert _{L_{p}}.
\]

The notation $H_{p}^{\alpha}\left(\mathbf{R}^{d}\right)$ and $H_{p}^{\alpha}\left(E\right)$
are used when $\nu\left(dy\right)=dy/\left|y\right|^{d+\alpha},\alpha\in\left(0,2\right).$
They are consistent with classical Bessel potential spaces and in
this case $L^{\nu}=c\left(\alpha,d\right)\left(-\triangle\right)^{\alpha/2}$
where $\left(-\triangle\right)^{\alpha/2}$ is a fractional Laplacian. 

We will also work with another space of smooth functions $\tilde{C}^{\infty}\left(\mathbf{R}^{d}\right)$
and $\tilde{C}^{\infty}\left(E\right)$ the space of locally integrable
functions such that for all $1\leq p<\infty$ and multiindex $\gamma\in\mathbf{N}_{0}^{d}$,
\begin{equation}
\sup_{x}\left|D^{\gamma}f\left(x\right)\right|+\left|D^{\gamma}f\right|_{L_{p}}<\infty\label{eq:p0}
\end{equation}

\noindent and 
\begin{equation}
\int_{0}^{T}\sup_{x}\left|D^{\gamma}f\left(t,x\right)\right|dt+\left|D^{\gamma}f\right|_{L_{p}\left(E\right)}<\infty\label{eq:p1}
\end{equation}

\noindent respectively. For a fixed $1\leq p<\infty$, we use $\tilde{C}_{p}^{\infty}\left(\mathbf{R}^{d}\right)$
(resp. $\tilde{C}_{p}^{\infty}\left(E\right)$) the space of locally
integrable functions with finite norms (\ref{eq:p0}) and (\ref{eq:p1})
respectively. 

We now define the space in which we study (\ref{eq:mainEq}). Let
$\mathcal{H}_{p}^{\nu}\left(E\right)$ be the space of all functions
$u\in H_{p}^{\nu}\left(E\right)$ such that 
\[
u\left(t,x\right)=\int_{0}^{t}F\left(s,x\right)ds,0\leq t\leq T
\]

\noindent for some $F\in L_{p}\left(E\right).$ It is a Banach space
equipped with the norm 

\noindent 
\[
\left|u\right|_{\mathcal{H}_{p}^{\nu}\left(E\right)}=\left|u\right|_{H_{p}^{\nu}\left(E\right)}+\left|F\right|_{L_{p}\left(E\right)}.
\]

\begin{defn}
Let $f\in L_{p}\left(E\right)$. We say that $u\in\mathcal{H}_{p}^{\nu}\left(E\right)$
is a solution of (\ref{eq:mainEq}) if $L^{m,\nu}u\in L_{p}\left(E\right)$
and
\[
u\left(t\right)=\int_{0}^{t}\left[L^{m,\nu}u\left(s\right)-\lambda u\left(s\right)+f\left(s\right)\right]ds,\hspace{1em}0\leq t\leq T
\]
\end{defn}
\noindent holds in $L_{p}\left(\mathbf{R}^{d}\right).$

We are now ready to state our main results. 
\begin{thm}
\noindent \label{Thm:main} Let $\nu\in\mathfrak{A}^{\sigma},w=w_{\nu}$
be a continuous O-RV function and \textbf{A, B, C} hold. Let $\beta\in\left(0,1\right),p>d/\beta$.
Then for any $f\in L_{p}\left(E\right)$ there exists a unique solution
$u\in\mathcal{H}_{p}^{\nu}\left(E\right)$ to (\ref{eq:mainEq}).
Moreover, there exist $\lambda_{0}=\lambda_{0}\left(d,p,\nu,\beta,\kappa,k,K\right)>0$
and $N=N\left(d,p,\nu,\beta,\kappa,k,K\right)>0$ such that for any
$\lambda\geq\lambda_{0},$
\begin{align*}
\left|\partial_{t}u\right|_{L_{p}\left(E\right)}+\left\vert L^{\nu}u\right\vert _{L_{p}\left(E\right)} & \leq N\left\vert f\right\vert _{L_{p}\left(E\right)}\\
\left\vert u\right\vert _{L_{p}\left(E\right)} & \leq N\rho_{\lambda}\left\vert f\right\vert _{L_{p}\left(E\right)}
\end{align*}
where $\rho_{\lambda}=T\wedge\lambda^{-1}.$ In addition, there exists
$N=N\left(d,p,\nu,\beta,\kappa,k,K,T\right)>0$ such that for any
$\lambda\geq0$,
\begin{align*}
\left\vert u\right\vert _{H_{p}^{\nu}\left(E\right)} & \leq N\left\vert f\right\vert _{L_{p}\left(E\right)}.
\end{align*}
\end{thm}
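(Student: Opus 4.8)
The plan is to follow the classical Krylov-style scheme of freezing coefficients combined with the method of continuity, adapted to the non-local setting with general L\'evy measure $\nu$. The argument splits naturally into two parts: a priori estimates and existence/uniqueness.

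\textbf{Step 1: The constant-coefficient model problem.} First I would establish the result when $m\equiv m_0(t,y)$ depends only on $t$ and $y$ (no spatial dependence), i.e. for the operator $L^{m_0,\nu}$. Here one uses the Fourier/multiplier approach: the solution operator is given in terms of the symbol $\psi^{m_0,\nu}$, and one shows that the associated parabolic solution operator maps $L_p(E)$ into $\mathcal H_p^{\nu}(E)$ with the stated bounds. The key analytic input is that $L^{\nu}\,(\lambda - \partial_t + L^{m_0,\nu})^{-1}$ is an $L_p$-bounded operator, which should follow from Calder\'on--Zygmund theory for singular integrals: one needs H\"ormander-type cancellation estimates on the kernel of the corresponding operator, and these are exactly where the O-RV scaling properties of $w_\nu$ (collected in the Appendix, together with Lemma \ref{lem:alpha12_integral} and the auxiliary comparison $\pi\prec\nu$) enter. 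Assumption \textbf{B} provides the needed non-degeneracy of the symbol so that $(1-\psi^{\nu_{sym}})$ and $-\psi^{m_0,\nu}$ are comparable and the resolvent is well-behaved. The equivalence $|L^{\nu_{sym};1}f|_{L_p}\asymp|L^{\nu}f|_{L_p}$ from Lemma \ref{lem:(Continuity-of-Operators)} lets us phrase everything in terms of $|L^{\nu}u|_{L_p(E)}$.

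\textbf{Step 2: Freezing coefficients and the a priori estimate.} With the model estimate in hand, I would treat the full spatially dependent operator $L^{m,\nu}$ by writing, for $u$ supported near a point $x_0$, $L^{m,\nu}u = L^{m(\cdot,x_0,\cdot),\nu}u + (L^{m,\nu}-L^{m(\cdot,x_0,\cdot),\nu})u$, where the second term is a perturbation controlled by the H\"older modulus $\kappa$. The crucial point is that the commutator-type term $(L^{m,\nu}-L^{m(\cdot,x_0,\cdot),\nu})u$ is, by Assumption \textbf{C}(ii) and the integrability $\int_{|y|\le1}\kappa(|y|)|y|^{-d-\beta}dy<\infty$, bounded in $L_p$ by a small constant (on a small ball) times $|u|_{H_p^{\nu}}$ plus lower-order terms, provided $p>d/\beta$ so that Sobolev embedding $H_p^{\nu}\hookrightarrow C^{\beta}$-type control is available --- this is precisely why the hypothesis $p>d/\beta$ is needed, as remarked in the introduction following \cite{MPr14}. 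A partition of unity then patches the local estimates into the global a priori bound $|\partial_t u|_{L_p(E)}+|L^{\nu}u|_{L_p(E)}\le N|f|_{L_p(E)}$ for $\lambda\ge\lambda_0$, with the $L_p$-bound on $u$ itself following from integrating the equation in time (giving the factor $\rho_\lambda = T\wedge\lambda^{-1}$). The full $H_p^{\nu}(E)$ bound for all $\lambda\ge 0$ then follows by absorbing $\lambda u$ into the right-hand side and using the already-established estimate, or by a standard iteration over short time intervals removing the largeness requirement on $\lambda$.

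\textbf{Step 3: Existence and uniqueness by continuity.} Uniqueness is immediate from the a priori estimate applied to the difference of two solutions. For existence I would use the method of continuity: consider the family $L_\theta = \theta L^{m,\nu} + (1-\theta)L^{\nu_{sym}}$ (or interpolating between $m$ and $k$, the lower bound), $\theta\in[0,1]$; each $L_\theta$ satisfies the same hypotheses with uniform constants, so the a priori estimate holds uniformly in $\theta$. Since solvability holds at $\theta=0$ (the constant-coefficient case from Step 1), the set of $\theta$ for which (\ref{eq:mainEq}) is solvable in $\mathcal H_p^{\nu}(E)$ is open and closed, hence all of $[0,1]$, giving solvability at $\theta=1$. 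The main obstacle I anticipate is Step 1 --- specifically, verifying the H\"ormander cancellation conditions for the singular integral kernel associated to the resolvent of the general non-local operator, since $\nu$ is far more general than the $\alpha$-stable measure; this is where the bulk of the technical work with the O-RV scaling estimates, the moment bounds of Lemma \ref{lem:alpha12_integral}, and gradient estimates of the transition density (Assumption \textbf{D}, \cite[Theorem 1.5]{KR}) must be deployed, and where care is needed to keep all constants depending only on the structural data $(d,p,\nu,\beta,\kappa,k,K)$.
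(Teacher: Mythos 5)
Your three-step scheme — constant-coefficient model via Hörmander/Calderón--Zygmund estimates, freezing coefficients plus Sobolev embedding (explaining $p>d/\beta$) to absorb the perturbation, then method of continuity — is exactly the paper's route (Sections 6--7, following the template of Lemmas 8--9 and Corollary 3 of \cite{MPr14}, with the paper's Step II using translated cutoffs $\zeta(\cdot-z)$ rather than a literal partition of unity, and the homotopy taken between $L^{m,\nu}$ and $L^{\nu}$). The one small misattribution is that you invoke Assumption \textbf{D} and the gradient bounds of \cite[Theorem 1.5]{KR} as ingredients for the Hörmander conditions; in fact the main theorem (under \textbf{A, B, C} only) relies on the density estimates of Lemma \ref{lem:timeDensity} (following \cite[Lemmas 6--8]{MPh3}) and Lemma \ref{lem:prelim_hormander}, while Assumption \textbf{D}/\cite{KR} enter only the optional isotropic unimodal weighted-continuity result, which is stated as an independent aside and is not used in the proof of Theorem \ref{Thm:main}.
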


\subsection{Examples}

In this section, we provide some examples to illustrate that the selected
assumptions can indeed cover a variety of interesting Lévy measures
that go beyond those comparable to $dy/\left|y\right|^{d+\alpha}.$
\begin{example}
\noindent \label{example:aniso} We start with a concrete example
of an anisotropic measure with the same order $\alpha\in\left(0,2\right)$
in every direction. Let $c_{i}>0,i=1,...,d$ and $\epsilon_{0}$ a
Dirac measure focused on the center in $\mathbf{R}^{d-1},$ 
\[
\nu\left(dy\right)=\sum_{i=1}^{d}c_{i}\left|y_{i}\right|^{-1-\alpha}dy_{i}\epsilon_{0}\left(dy_{1},...,dy_{i-1},dy_{i+1}dy_{d}\right).
\]

\noindent By direct computation,

\noindent 
\[
c_{i}\int_{\left|y\right|\geq r}\left|y_{i}\right|^{-1-\alpha}dy_{i}\epsilon_{0}\left(dy_{1},...,dy_{i-1},dy_{i+1}dy_{d}\right)=2c_{i}\int_{x\geq r}x^{-1-\alpha}dx=2c_{i}r^{-\alpha}.
\]

\noindent Thus, $\text{\ensuremath{\delta}}_{\nu}\left(r\right)=\nu\left(x\in\mathbf{R}^{d}:\left|x\right|>r\right)=2\left(\sum_{i=1}^{d}c_{i}\right)r^{-\alpha}$
and 
\[
w_{\nu}\left(r\right)=\frac{1}{2}\left(\sum_{i=1}^{d}c_{i}\right)^{-1}r^{\alpha}.
\]

\noindent We may also check assumption \textbf{B }straightforwardly,
for $\left|\hat{\xi}\right|=1$ and $r>0,$ 
\begin{align*}
 & c_{i}w\left(r\right)\int_{\left|y\right|\leq1}\left|y\cdot\hat{\xi}\right|^{2}\left|ry_{i}\right|^{-1-\alpha}d\left(ry_{i}\right)\epsilon_{0}\left(dy_{1},...,dy_{i-1},dy_{i+1}dy_{d}\right)\\
 & =2c_{i}w\left(r\right)r^{-\alpha}\int_{x\leq1}\left|\hat{\xi}_{i}\right|^{2}x^{1-\alpha}dx=\frac{c_{i}}{\left(2-\alpha\right)\left(\sum_{i=1}^{d}c_{i}\right)}\left|\hat{\xi}_{i}\right|^{2}.
\end{align*}

\noindent Therefore, for $\left|\hat{\xi}\right|=1$ and $r>0$,
\[
\int_{\left|y\right|\leq1}\left|y\cdot\hat{\xi}\right|^{2}\tilde{\nu}_{r}\left(dy\right)\geq\frac{\min_{i=1,...,d}\left\{ c_{i}\right\} }{\left(2-\alpha\right)\left(\sum_{i=1}^{d}c_{i}\right)}>0.
\]
\end{example}
\noindent The following two examples of measures are taken from \cite{MF}
with slight modifications. In fact, the above example is a special
case of the next example as well.
\begin{example}
\noindent According to \cite{DM} (pp. 70-74), any Lévy measure $\nu\in\mathfrak{A}^{\sigma}$
can be disintegrated as 
\[
\nu\left(\Gamma\right)=-\int_{0}^{\infty}\int_{S_{d-1}}\chi_{\Gamma}\left(rz\right)\Pi\left(r,dz\right)d\delta_{\nu}\left(r\right),\Gamma\in\mathcal{B}\left(\mathbf{R}_{0}^{d}\right),
\]
where $\delta=\delta_{\nu}$, and $\Pi\left(r,d\omega\right),r>0$
is a measurable family of measures on the unit sphere $S_{d-1}$ with
$\Pi\left(r,S_{d-1}\right)=1,r>0.$ If 

\noindent (i) $w_{\nu}=\delta^{-1}$ is an continuous, O-RV and satisfies
Assumption $\mathbf{A}$ 

\noindent (ii) $\left\vert \left\{ s\in\left[0,1\right]:r_{i}\left(s\right)<1\right\} \right\vert >0,i=1,2,$
and 

\noindent (iii) $\inf_{\left\vert \hat{\xi}\right\vert =1}\int_{S_{d-1}}\left\vert \hat{\xi}\cdot z\right\vert ^{2}\Pi\left(r,dz\right)\geq c_{0}>0,r>0,$ 

\noindent then all assumptions of Theorem \ref{Thm:main} holds.
\end{example}
\noindent 
\begin{example}
\noindent Consider Lévy measure in radial and angular coordinate in
the form 
\[
\nu\left(B\right)=\int_{0}^{\infty}\int_{\left\vert \omega\right\vert =1}1_{B}\left(rz\right)a\left(r,z\right)j\left(r\right)r^{d-1}S\left(dz\right)dr,B\in\mathcal{B}\left(\mathbf{R}_{0}^{d}\right),
\]
where $S\left(dz\right)$ is a finite measure on the unit sphere.

\noindent Assume

\noindent (i) There is $C>1,c>0,0<\delta_{1}\leq\delta_{2}<1$ such
that 
\[
C^{-1}\phi\left(r^{-2}\right)\leq j\left(r\right)r^{d}\leq C\phi\left(r^{-2}\right)
\]
and for all $0<r\leq R$, 
\[
c^{-1}\left(\frac{R}{r}\right)^{\delta_{1}}\leq\frac{\phi\left(R\right)}{\phi\left(r\right)}\leq c\left(\frac{R}{r}\right)^{\delta_{2}}.
\]

\noindent (ii) There is a function $\rho_{0}\left(\omega\right)$
defined on the unit sphere such that $\rho_{0}\left(z\right)\leq a\left(r,z\right)\leq1,r>0,z\in S_{d-1}$,
and for all $\left\vert \hat{\xi}\right\vert =1$, 
\[
\int_{S^{d-1}}\left\vert \hat{\xi}\cdot z\right\vert ^{2}\rho_{0}\left(z\right)S\left(dz\right)\geq c>0.
\]
Under these assumptions it can be shown that $\mathbf{B}$ holds,
and $w_{\nu}$ is an O-RV function with $2\delta_{1}\leq p_{i}^{\nu}\leq q_{i}^{\nu}\leq2\delta_{2},i=1,2.$
To provide some examples of $\phi$ satisfying (i), we can take $\phi\left(r\right)=r^{\alpha}\left(\ln\left(1+r\right)\right)^{\beta},\alpha\in\left(0,1\right),\beta\in\left(0,1-\alpha\right)$
or $\phi\left(r\right)=\left[\ln\left(\cosh\sqrt{r}\right)\right]^{\alpha},$
$\alpha\in\left(0,1\right).$ 
\end{example}

\section{Symbol and Probability Density Estimates}

\noindent Let $\nu\in\mathfrak{A}^{\sigma},w=w_{\nu}$ be a continuous
O-RV function and \textbf{A, B} hold, $0<k\leq m\left(t,y\right)\leq K,t\in\left[0,T\right],y\in\mathbf{R}^{d}$
and $p^{m,\nu}\left(dt,dy\right)$ be a Poisson point measure on $\left[0,\infty\right)\times\mathbf{R}_{0}^{d}$
such that $\mathbf{E}p^{m,\nu}\left(dt,dy\right)=m\left(t,y\right)\nu\left(dy\right)dt.$ 

Let $q^{m,\nu}\left(dt,dy\right)=q^{m,\nu}\left(dt,dy\right)=p^{m,\nu}\left(dt,dy\right)-m\left(t,y\right)\nu\left(dy\right)dt$
be its compensator. We associate to $L^{m,\nu}$ the stochastic process
with independent increments 
\begin{equation}
Z_{t}^{m,\nu}=\int_{0}^{t}\int\chi_{\sigma}\left(y\right)yq^{m,\nu}\left(ds,dy\right)+\int_{0}^{t}\int\left(1-\chi_{\sigma}\left(y\right)\right)yp^{m,\nu}\left(ds,dy\right),t\in\left[0,T\right].\label{f10}
\end{equation}
By It\^{o} formula (e.g. \cite[Proposition 1]{Mik1} or \cite[Proposition 8.19]{CT}),
\[
\mathbf{E}e^{i2\pi\xi\cdot\left(Z_{t}^{m,\nu}-Z_{s}^{m,\nu}\right)}=\exp\left\{ \int_{s}^{t}\psi^{m,\nu}\left(r,\xi\right)dr\right\} ,0<s<t\leq T,\xi\in\mathbf{R}^{d},
\]
where the symbol $\psi^{m,\nu}$ is given by 
\[
\psi^{m,\nu}\left(r,\xi\right)=\int\left[e^{i2\pi\xi\cdot y}-1-i2\pi y\cdot\xi\chi_{\sigma}\left(y\right)\right]m\left(r,y\right)\nu\left(dy\right).
\]

We will show in Lemma \ref{lem:timeDensity} below that 
\begin{equation}
p^{m,\nu}\left(s,t\right)=\mathcal{F}\exp\left\{ \int_{s}^{t}\psi^{m,\nu}\left(r,\xi\right)dr\right\} \label{eq:p_Fourier}
\end{equation}
is the probability density of $Z_{t}^{m,\nu}-Z_{s}^{m,\nu}$. Note
that in the case of $m=1$, $Z_{t}^{\nu}=Z_{t}^{1,\nu}$ has a stationary
independent increment and we write $p^{1,\nu}\left(s,t\right)=p^{\nu}\left(t-s\right)$
and $\psi^{1,\nu}\left(r,\xi\right)=\psi^{\nu}\left(\xi\right)$ respectively. 

The following estimates of the symbol without a coefficient is an
immediate consequence of \cite[Lemma 7]{MPh1}. 
\begin{lem}
\noindent \label{lem:symbolEst}Let $\nu\in\mathfrak{A}^{\sigma},w=w_{\nu}$
be a continuous O-RV function and\textbf{ A, B} hold. Let $\pi\prec\nu\left(M,\alpha_{1},\alpha_{2}\right)$
and $\tilde{\pi}_{R}\left(dy\right)=w\left(R\right)\pi\left(Rdy\right),R>0$
then there is $N=N\left(\nu\right)>0$ so that 
\[
\left|\psi^{\tilde{\pi}_{R}}\left(\xi\right)\right|\leq NMw\left(\left|\xi\right|^{-1}\right)^{-1},\hspace{1em}\left|\xi\right|\in\mathbf{R}^{d},
\]

\noindent assuming $w\left(\left|\xi\right|^{-1}\right)^{-1}=0$ if
$\xi=0$. Moreover, if $\pi=m\left(y\right)\nu\left(dy\right)$ where
$0<k\leq m\left(y\right)\leq K,y\in\mathbf{R}^{d}$ then there are
$N=N\left(\nu\right)$ and $n=n\left(\nu\right)>0$ so that

\noindent 
\[
\left|\psi^{\tilde{\pi}_{R}}\left(\xi\right)\right|\leq NKw\left(\left|\xi\right|^{-1}\right)^{-1},\hspace{1em}\left|\xi\right|\in\mathbf{R}^{d},
\]
\[
\mathfrak{R}\left\{ \psi^{\tilde{\pi}_{R}}\left(\xi\right)\right\} \leq-nkw\left(\left|\xi\right|^{-1}\right)^{-1},\hspace{1em}\left|\xi\right|\in\mathbf{R}^{d}.
\]
\end{lem}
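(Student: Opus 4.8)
Per the remark just before the statement, this should be immediate from \cite[Lemma 7]{MPh1}; concretely, I would argue as follows. Fix $\xi\neq0$, set $\rho=|\xi|^{-1}$, and split the integral defining $\psi^{\tilde\pi_{R}}(\xi)$ over $\{|y|\le\rho\}$ and $\{|y|>\rho\}$. On $\{|y|\le\rho\}$, where $|\xi\cdot y|\le1$, I would Taylor-expand: using $\alpha_{1}\le2$ --- and, when $\sigma=1$, first discarding the drift term via the cancellation of $\int_{r<|y|\le R}y\,\pi(dy)$ on annuli, which holds for $\pi=m\nu$ by Assumption \textbf{C} and is assumed in general --- the integrand is $O(|\xi\cdot y|^{2}\wedge1)=O(|\xi|^{\alpha_{1}}|y|^{\alpha_{1}})$, and after the substitution $y=x/R$, recognizing $\tilde\pi_{R\rho}$,
\[
\int_{|y|\le\rho}|y|^{\alpha_{1}}\,\tilde\pi_{R}(dy)=\rho^{\alpha_{1}}\,\frac{w(R)}{w(R\rho)}\int_{|z|\le1}|z|^{\alpha_{1}}\,\tilde\pi_{R\rho}(dz)\le M\,\rho^{\alpha_{1}}\,\frac{w(R)}{w(R\rho)},
\]
because the moment bound in $\pi\prec\nu(M,\alpha_{1},\alpha_{2})$ is uniform in the scaling index; hence the $\{|y|\le\rho\}$-contribution to $|\psi^{\tilde\pi_{R}}(\xi)|$ is $\le C|\xi|^{\alpha_{1}}M\rho^{\alpha_{1}}w(R)/w(R\rho)=CM\,w(R)/w(R\rho)$. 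On $\{|y|>\rho\}$ I would use the crude bounds ($\le C$ for $\sigma<1$; $\le C(1+|\xi|\,|y|)$ for $\sigma>1$, the drift term being absorbed because $\alpha_{2}>1$ there; the intermediate bound for $\sigma=1$), together with $\tilde\pi_{R}(\{|y|>\rho\})=w(R)\,\delta_{\pi}(R\rho)\le M\,w(R)/w(R\rho)$ and, for $\sigma\ge1$, $\int_{|y|>\rho}|y|\,\tilde\pi_{R}(dy)\le M\rho\,w(R)/w(R\rho)$, all obtained by the same rescaling to $\tilde\pi_{R\rho}$ and the defining inequalities of $\pi\prec\nu$. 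Summing the two pieces gives $|\psi^{\tilde\pi_{R}}(\xi)|\le CM\,w(R)/w(R\rho)$ for all $R>0$.

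To finish the first assertion I would invoke the scaling property of the O-RV function $w$ --- $w(R)/w(R\rho)\le C\,w(\rho)^{-1}$ uniformly in $R,\rho>0$ --- which turns this into $|\psi^{\tilde\pi_{R}}(\xi)|\le CM\,w(|\xi|^{-1})^{-1}$ (the value at $\xi=0$ being $0$). For $\pi(dy)=m(y)\nu(dy)$ with $0<k\le m\le K$, the same bound with $M$ replaced by $NK$ follows at once since $\tilde\pi_{R}\le K\,\tilde\nu_{R}$ while $\tilde\nu_{R}$ satisfies the inequalities in $\pi\prec\nu$ with constants depending only on $\nu$, by Lemma \ref{lem:alpha12_integral}. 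For the lower bound on the real part I would use $m\ge k$ to write
\[
-\mathfrak{R}\,\psi^{\tilde\pi_{R}}(\xi)=\int\bigl(1-\cos2\pi\xi\cdot y\bigr)\,m(Ry)\,\tilde\nu_{R}(dy)\ge2k\int\sin^{2}(\pi\xi\cdot y)\,\tilde\nu_{R}(dy)\ge c\,k\,|\xi|^{2}\int_{|y|\le\rho/4}|\hat\xi\cdot y|^{2}\,\tilde\nu_{R}(dy),
\]
then rescale to $\tilde\nu_{R\rho/4}$, apply Assumption \textbf{B} to bound the last integral below by $c'\,w(R)/(|\xi|^{2}w(R\rho))$, and finally use the companion lower scaling bound $w(R)/w(R\rho)\ge c\,w(\rho)^{-1}$ to conclude $\mathfrak{R}\,\psi^{\tilde\pi_{R}}(\xi)\le-nk\,w(|\xi|^{-1})^{-1}$.

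Apart from the scaling of $w$, all of the above is a routine pairing of the Taylor estimate with the scaling built into $\pi\prec\nu$. The main obstacle is precisely the two-sided comparison $w(R)/w(R\rho)\asymp w(\rho)^{-1}$, uniform in $R$ and $\rho$: this is where the O-RV hypotheses and Assumption \textbf{A} on the indices genuinely enter, and it will have to be extracted from Potter-type bounds for $w$ near $0$ and near $\infty$ and reconciled on the intervening compact range, the delicate point being that those one-sided estimates fit together. I would isolate it as a separate lemma --- of the kind collected in the Appendix --- and quote it here.
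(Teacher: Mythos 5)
The paper gives no proof of this Lemma --- it is quoted as an immediate consequence of \cite[Lemma 7]{MPh1} --- so there is no internal argument to compare against. Your template (split at $|y|\le\rho:=|\xi|^{-1}$, Taylor inside against the $\alpha_{1}$-moment, crude tail bound outside against the $\alpha_{2}$-moment, rescale to $\tilde\pi_{R\rho}$, invoke the defining inequalities of $\pi\prec\nu$) is the right one, and you correctly locate the entire difficulty in the step $w(R)/w(R\rho)\preceq w(\rho)^{-1}$.

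That step is, however, a genuine gap, not merely a ``delicate reconciliation'' of Potter bounds, and it is not covered by any lemma in the Appendix. Lemma~\ref{lem:alpha12_ratio} only gives $w(R)/w(R\rho)\le c\,\rho^{-\alpha_{2}}$ and $w(\rho)^{-1}\ge c'w(1)^{-1}\rho^{-\alpha_{1}}$ for $\rho\ge1$ (and the analogous mismatch for $\rho\le1$); since $\alpha_{2}<\alpha_{1}$ these two one-sided power bounds cannot be combined into the inequality you want. Indeed the bound you need is the supermultiplicativity $w(R)w(\rho)\preceq w(R\rho)$ uniformly in $R,\rho>0$, and this \emph{fails} for perfectly admissible $w$: take $w(t)=t^{\alpha}\bigl(1+\ln(1+t)\bigr)$ with $\alpha\in(0,1)$ --- continuous, increasing, O-RV at $0$ and $\infty$ with all four indices equal to $\alpha$, compatible with Assumption \textbf{A} --- then $w(R)/w(R\rho)\to\rho^{-\alpha}$ as $R\to\infty$ while $w(\rho)^{-1}=\rho^{-\alpha}/(1+\ln(1+\rho))$, so the ratio of the two sides is unbounded in $\rho$. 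What your computation does deliver, cleanly and uniformly in $R$, is the scale-corrected estimate
\[
\bigl|\psi^{\tilde\pi_{R}}(\xi)\bigr|\le NM\,\frac{w(R)}{w(R|\xi|^{-1})},
\]
which reduces to the stated right-hand side only when $R\asymp1$; this is, incidentally, the only case the paper actually uses downstream (Lemma~\ref{lem:(Continuity-of-Operators)} and Proposition~\ref{prop:ME}). So the auxiliary lemma you propose to quote does not exist in the generality required, and you cannot close the argument as written: either the statement must be read at $R=1$ (or with the $R$-dependent bound above), or the derivation must bypass any $w(R)/w(R\rho)$-to-$w(\rho)^{-1}$ comparison. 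The remaining ingredients of your sketch --- the Taylor/moment pairing, the drift cancellation at $\sigma=1$, the rescaling to $\tilde\pi_{R\rho}$, and the Assumption~\textbf{B} lower bound on $-\mathfrak{R}\psi$ --- are sound, but they all terminate in the same unresolved ratio.
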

Some probability estimates for the case $m=1$ were derived in \cite[Lemma 8]{MPh3}.
We will extend those estimates to $p^{m,\nu}\left(s,t\right)$. Indeed,
changing the variable of integration, denoting $R=a\left(t-s\right)$
where 
\[
a\left(r\right)=\inf\left\{ t>0:w_{\nu}\left(t\right)\geq r\right\} ,r>0
\]
is the left continuous inverse of $w_{\nu}$, then at least formally
for $0<s<t\leq T$ and $x\in\mathbf{R}^{d},$
\begin{align*}
 & p^{m,\nu}\left(s,t,x\right)\\
 & =R^{-d}\mathcal{F}\exp\left\{ \int_{s}^{t}\int\left[e^{i2\pi\xi\cdot y}-1-i2\pi y\cdot\xi\chi_{\sigma}\left(y\right)\right]m\left(r,Ry\right)\nu\left(Rdy\right)dr\right\} \left(\frac{x}{R}\right)\\
 & =R^{-d}\mathcal{F}\exp\left\{ \left(t-s\right)^{-1}\int_{s}^{t}\int\left[e^{i2\pi\xi\cdot y}-1-i2\pi y\cdot\xi\chi_{\sigma}\left(y\right)\right]m_{R}\left(r,y\right)\tilde{\nu}_{R}\left(dy\right)dr\right\} \left(\frac{x}{R}\right)\\
 & =R^{-d}\mathcal{F}\exp\left\{ \int\left[e^{i2\pi\xi\cdot y}-1-i2\pi y\cdot\xi\chi_{\sigma}\left(y\right)\right]\bar{m}_{R}^{s,t}\left(y\right)\tilde{\nu}_{R}\left(dy\right)\right\} \left(\frac{x}{R}\right).
\end{align*}

\noindent where $\bar{m}_{R}^{s,t}\left(y\right)=\text{\ensuremath{\left(t-s\right)}}^{-1}\int_{s}^{t}m_{R}\left(r,y\right)dr,$
clearly $k\leq\bar{m}_{R}^{s,t}\left(y\right)\leq K,y\in\mathbf{R}^{d}.$ 

\medskip

That is, recalling $R=a\left(t-s\right)$, 

\noindent 
\begin{align}
p^{m,\nu}\left(s,t,x\right) & =R^{-d}\bar{p}^{m_{a\left(t-s\right)},\tilde{\nu}_{a\left(t-s\right)}}\left(s,t,\frac{x}{a\left(t-s\right)}\right),0<s<t\leq T,x\in\mathbf{R}^{d}\label{eq:scale_p0}
\end{align}

\noindent where for $R>0,$
\begin{align}
 & \bar{p}^{m_{R},\tilde{\nu}_{R}}\left(s,t,x\right)\label{eq:pbar}\\
= & \mathcal{F}\left\{ \exp\left(\int\left[e^{i2\pi\xi\cdot y}-1-i2\pi y\cdot\xi\chi_{\sigma}\left(y\right)\right]\bar{m}_{R}^{s,t}\left(y\right)\tilde{\nu}_{R}\left(dy\right)\right)\right\} \left(x\right)\nonumber \\
= & \mathcal{\mathcal{F}}\exp\left\{ \psi^{\bar{m}_{R}^{s,t},\tilde{\nu}_{R}}\left(\xi\right)\right\} \left(x\right).\nonumber 
\end{align}

We may also express (\ref{eq:scale_p0}) in terms of symbols, 

\begin{equation}
\exp\left\{ \int_{s}^{t}\psi^{m,\nu}\left(r,\xi\right)dr\right\} =\exp\left\{ \psi^{\bar{m}_{a\left(t-s\right)}^{s,t},\tilde{\nu}_{a\left(t-s\right)}}\left(a\left(t-s\right)\xi\right)\right\} .\label{eq:scale_symb}
\end{equation}
With the above scaling computation in mind, we derive estimates of
$\bar{p}^{m_{R},\tilde{\nu}_{R}}.$ 
\begin{lem}
\noindent \label{lem:timeDensity} Let $\nu\in\mathfrak{A}^{\sigma},w=w_{\nu}$
be a continuous O-RV function and \textbf{A, B} hold. Let $0<k\leq m\left(t,y\right)\leq K,t\in\left[0,T\right],y\in\mathbf{R}^{d},$$\pi\prec\nu\left(M,\alpha_{1},\alpha_{2}\right)$
and $\tilde{\pi}_{R}\left(dy\right)=w\left(R\right)\pi\left(Rdy\right),R>0.$
Then the following statements hold.

\bigskip

\noindent (i) $\bar{p}^{m_{R},\tilde{\nu}_{R}}$ defined by (\ref{eq:pbar})
is a probability density and $p^{m,\nu}\left(s,t\right)$ defined
by (\ref{eq:p_Fourier}) is the probability density of $Z_{t}^{m,\nu}-Z_{s}^{m,\nu},0<s<t\leq T$. 

\bigskip

\noindent (ii) For any multiindex $\eta\in\mathbf{N}_{0}^{d}$, there
is $N=N\left(\eta,\alpha_{2},\nu,k,K\right)>0$ such that
\[
\int\left(1+\left|x\right|^{\alpha_{2}}\right)\left|D^{\eta}L^{\tilde{\pi}_{R}}\bar{p}^{m_{R},\tilde{\nu}_{R}}\left(s,t,x\right)\right|dx\leq NM,\hspace{1em}0<s<t\leq T,R>0.
\]

\medskip

\noindent (iii) If $\left|m^{\prime}\left(y\right)\right|\leq K_{1},y\in\mathbf{R}^{d}$,
then there is $N=N\left(\alpha_{2},\nu,k,K,K_{1}\right)>0$ such that 

\noindent 
\[
\int\left(1+\left|x\right|^{\alpha_{2}}\right)\left|L^{\tilde{\pi}_{R}}L^{m^{\prime},\tilde{\nu}_{R}^{\ast}}\bar{p}^{m_{R},\tilde{\nu}_{R}}\left(s,t,x\right)\right|dx\leq NM,\hspace{1em}0<s<t\leq T,R>0.
\]
\end{lem}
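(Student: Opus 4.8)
The plan is to obtain everything from Fourier-side estimates on the symbol $\psi^{\bar m_R^{s,t},\tilde\nu_R}$, combined with the moment bounds of Lemma \ref{lem:alpha12_integral} and the symbol bounds of Lemma \ref{lem:symbolEst}. First I would note that $\bar m_R^{s,t}(y)$ is measurable with $k\le\bar m_R^{s,t}\le K$ and that $\bar m_R^{s,t}\tilde\nu_R$ is, up to the uniform constants $k,K$, controlled by $\tilde\nu_R$; hence by Lemma \ref{lem:symbolEst} (applied with $\pi=\bar m_R^{s,t}(y)\nu$, noting the constant in the bound depends only on $\nu$) one has
\[
\mathfrak{R}\,\psi^{\bar m_R^{s,t},\tilde\nu_R}(\xi)\le -nk\,w(|\xi|^{-1})^{-1},\qquad
\bigl|\psi^{\bar m_R^{s,t},\tilde\nu_R}(\xi)\bigr|\le NK\,w(|\xi|^{-1})^{-1},
\]
uniformly in $R>0$ and $0<s<t\le T$. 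Since $w(|\xi|^{-1})^{-1}\to\infty$ as $|\xi|\to\infty$ at a rate governed by the upper index $q_1\wedge q_2>0$ (Appendix scaling of $w$), the function $\xi\mapsto\exp\{\psi^{\bar m_R^{s,t},\tilde\nu_R}(\xi)\}$ is integrable together with all its $\xi$-derivatives; this makes $\bar p^{m_R,\tilde\nu_R}(s,t,\cdot)$ a well-defined bounded smooth function, and the identity $\int\bar p^{m_R,\tilde\nu_R}(s,t,x)\,dx=\exp\{\psi^{\cdots}(0)\}=1$ together with the fact that it is the inverse Fourier transform of a characteristic function (the It\^o-formula computation preceding the lemma, applied to the process driven by the frozen-and-scaled measure) gives that it is a probability density. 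The scaling identity \eqref{eq:scale_p0} then transfers this to $p^{m,\nu}(s,t)$, proving (i).

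For (ii) I would work on the Fourier side. For a multiindex $\eta$, $D^\eta L^{\tilde\pi_R}\bar p^{m_R,\tilde\nu_R}(s,t,\cdot)$ has Fourier transform $(i2\pi\xi)^\eta\,\psi^{\tilde\pi_R}(\xi)\,\exp\{\psi^{\bar m_R^{s,t},\tilde\nu_R}(\xi)\}$. By Lemma \ref{lem:symbolEst}, $|\psi^{\tilde\pi_R}(\xi)|\le NM\,w(|\xi|^{-1})^{-1}$, so this is dominated by $NM\,|\xi|^{|\eta|}\,w(|\xi|^{-1})^{-1}\exp\{-nk\,w(|\xi|^{-1})^{-1}\}$. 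To control the weighted $L_1$ norm $\int(1+|x|^{\alpha_2})|\cdot|\,dx$ I would split: the unweighted part is bounded by $\|(1+|\Delta|)^{\lceil (d+1)/2\rceil}[\,\cdot\,]^{\widehat{}}\,\|_{L_1(d\xi)}$ (so that $(1+|x|^2)^{-\lceil(d+1)/2\rceil}$ is integrable in $x$), and for the $|x|^{\alpha_2}$-weight, writing $|x|^{\alpha_2}\le 1+|x|^{2\lceil\alpha_2/2\rceil}$, I bring down $\xi$-derivatives of the symbol. The key technical input is that each $\xi$-derivative of $\psi^{\tilde\pi_R}$ and of $\psi^{\bar m_R^{s,t},\tilde\nu_R}$ is again controlled by (powers of) $|\xi|^{-1}$ times $w(|\xi|^{-1})^{-1}$ uniformly in $R,s,t$ — this follows by differentiating under the integral sign in the L\'evy--Khintchine integral and invoking Lemma \ref{lem:alpha12_integral} with exponents $\alpha_1>q_1\vee q_2$ near the origin and $0<\alpha_2<p_1\wedge p_2$ at infinity, exactly as in \cite[Lemma 8]{MPh3} for the case $m=1$. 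Because $\bar m_R^{s,t}$ enters only as a bounded measurable multiplier (its $y$-derivatives never appear in part (ii)), these estimates go through verbatim with the constant $N$ depending additionally only on $k,K$. Then Gaussian-type decay from the factor $\exp\{-nk\,w(|\xi|^{-1})^{-1}\}$ absorbs all the polynomial-in-$\xi$ losses and yields a finite bound, uniform in $R>0$ and $0<s<t\le T$, proportional to $M$.

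For (iii) the only new feature is the extra operator $L^{m',\tilde\nu_R^\ast}$ with a coefficient $m'$ that is now differentiable in $y$ with $|m'(y)|\le K_1$; on the Fourier side this contributes the symbol $\psi^{m'_R,\tilde\nu_R^\ast}(\xi)=\int[e^{i2\pi\xi\cdot y}-1-i2\pi y\cdot\xi\chi_\sigma(y)]\,m'(Ry)\,\tilde\nu_R^\ast(dy)$, which by Lemma \ref{lem:symbolEst} (with $K$ replaced by the bound $K_1$ on $m'$, and $\nu^\ast$ in place of $\nu$ — note $w_{\nu^\ast}=w_\nu$ so the same indices govern everything) satisfies $|\psi^{m'_R,\tilde\nu_R^\ast}(\xi)|\le NK_1\,w(|\xi|^{-1})^{-1}$. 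The Fourier transform of the integrand is then $\psi^{\tilde\pi_R}(\xi)\,\psi^{m'_R,\tilde\nu_R^\ast}(\xi)\,\exp\{\psi^{\bar m_R^{s,t},\tilde\nu_R}(\xi)\}$, bounded by $NMK_1\,w(|\xi|^{-1})^{-2}\exp\{-nk\,w(|\xi|^{-1})^{-1}\}$, and I repeat the splitting argument of (ii): the extra factor $w(|\xi|^{-1})^{-1}$ is just one more polynomial-in-$|\xi|$ loss (of order $q_1\vee q_2$ near $\infty$), again absorbed by the exponential. Differentiating this product in $\xi$ to handle the $|x|^{\alpha_2}$-weight produces, by the Leibniz rule, terms in which derivatives may fall on $\psi^{m'_R,\tilde\nu_R^\ast}$; here one uses that $\partial_\xi$ brings down a factor $y$ and $m'(Ry)$ stays bounded by $K_1$, so the relevant integrals $\int|y|^{\cdots}|m'(Ry)|\tilde\nu_R^\ast(dy)$ are again controlled by Lemma \ref{lem:alpha12_integral} uniformly in $R$. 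Collecting terms gives the bound $NM$ with $N=N(\alpha_2,\nu,k,K,K_1)$.

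\smallskip

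The main obstacle I anticipate is the bookkeeping in (ii)--(iii): one must verify that every $\xi$-derivative of each symbol stays uniformly (in $R$ and in $s,t$) bounded by the right power of $|\xi|^{-1}$ times $w(|\xi|^{-1})^{-1}$, which requires carefully separating the near-origin contribution (use $\alpha_1\le 2$, or $\le 1$ when $\sigma<1$, to absorb the compensator term $y\cdot\xi\chi_\sigma(y)$ and the Taylor remainder $|e^{i2\pi\xi\cdot y}-1-\dots|\lesssim |\xi y|^{\alpha_1}$) from the far contribution (use $\alpha_2>0$, and $\alpha_2>1$ when $\sigma>1$, so that $\int_{|y|>1}|y|^{\alpha_2}\tilde\nu_R(dy)\le M$ dominates the linear term as well); the $\sigma=1$ case additionally uses the symmetry hypothesis $\int_{r<|y|\le R}y\,\nu(dy)=0$ to kill the otherwise-divergent linear term. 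Once the derivative bounds are in place, the passage from Fourier estimates to the weighted $L_1$ bounds in $x$ is routine, and the dependence of constants on $k,K$ (and $K_1$ in (iii)) is transparent because the coefficients appear only as bounded multipliers inside the L\'evy--Khintchine integrals.
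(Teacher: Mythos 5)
Your argument for (i) matches the paper's: establish the exponential decay $\mathfrak{R}\,\psi^{\bar m_R^{s,t},\tilde\nu_R}(\xi)\le -c\,|\xi|^{\alpha_2}$ for $|\xi|\ge1$ from Assumption \textbf{B} together with the scaling of $w$, and invoke Sato's criterion to conclude that the inverse Fourier transform is a probability density; the scaling identity (\ref{eq:scale_p0}) transfers this to $p^{m,\nu}(s,t)$. (Your parenthetical claim that \emph{all} $\xi$-derivatives of the symbol's exponential are integrable is unnecessary for (i), and, as discussed below, is not actually available.)

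For (ii)--(iii) there is a genuine gap in the strategy. You propose to control the weight $\left(1+|x|^{\alpha_2}\right)$ by bounding $|x|^{\alpha_2}\le 1+|x|^{2\lceil\alpha_2/2\rceil}=1+|x|^2$ (since $\alpha_2<2$), and likewise to get $L_1$-integrability in $x$ by hitting the Fourier transform with $\left(1-\Delta_\xi\right)^{\lceil(d+1)/2\rceil}$; in both cases you then need to differentiate the symbols $\psi^{\tilde\pi_R}$ and $\psi^{\bar m_R^{s,t},\tilde\nu_R}$ in $\xi$ to integer order $\ge1$ (indeed $\ge 2$, and for the unweighted part up to order $2\lceil(d+1)/2\rceil$). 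But each $\xi$-derivative brings down a factor $y$ inside the L\'evy--Khintchine integral, so controlling $D_\xi^\gamma\psi^{\tilde\pi_R}(\xi)$ on the region $|y|>1$ requires $\int_{|y|>1}|y|^{|\gamma|}\,\tilde\pi_R(dy)<\infty$. Lemma \ref{lem:alpha12_integral} and the hypothesis $\pi\prec\nu(M,\alpha_1,\alpha_2)$ provide only a \emph{fractional} moment $\int_{|y|>1}|y|^{\alpha_2}\,\tilde\pi_R(dy)\le M$ with $0<\alpha_2<2$ (and $\alpha_2<1$ whenever $\sigma<1$), not second moments or any integer moments beyond $\lfloor\alpha_2\rfloor$. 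The same problem afflicts $\psi^{\bar m_R^{s,t},\tilde\nu_R}$. So the purely Fourier-side scheme you describe cannot be completed as stated: the ``polynomial-in-$|\xi|$ losses'' you hope to absorb with the Gaussian-type factor never materialize as finite quantities, because the derivatives of the symbol are already infinite for the relevant integer orders.

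The correct route, which is what \cite[Lemmas 6--8]{MPh3} (cited in the paper's one-paragraph proof) actually carry out, decomposes $\tilde\nu_R$ into a small-jump part $\mu_1=\tilde\nu_R|_{\{|y|\le1\}}$ and a large-jump compound-Poisson part $\mu_2=\tilde\nu_R|_{\{|y|>1\}}$, so that $\bar p^{m_R,\tilde\nu_R}(s,t,\cdot)$ is a convolution of the density generated by $\bar m_R^{s,t}\mu_1$ with the compound-Poisson law generated by $\bar m_R^{s,t}\mu_2$. The small-jump measure has moments of all orders, so the Fourier-side integer-derivative computation you envision goes through for that factor (with $\alpha_1\le 2$, resp.\ $\le 1$, absorbing the Taylor remainder near the origin). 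The large-jump factor is handled on the \emph{spatial} side by the elementary moment inequality $|x_1+\dots+x_k|^{\alpha_2}\lesssim\sum_i|x_i|^{\alpha_2}$ together with $\int_{|y|>1}|y|^{\alpha_2}\tilde\nu_R(dy)\le C$ and $\int_{|y|>1}|y|^{\alpha_2}\tilde\pi_R(dy)\le M$; one never differentiates its characteristic function. The two pieces are recombined via the convolution inequality $\int|x|^{\alpha_2}|f*g|\,dx\lesssim\int|x|^{\alpha_2}|f|\,dx\cdot\|g\|_{L_1}+\|f\|_{L_1}\cdot\int|x|^{\alpha_2}|g|\,dx$. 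The paper's observation that ``the presence of bounded $\bar m_R^{s,t}(y)$ does not cause any additional difficulties'' refers precisely to this split: the multiplier $\bar m_R^{s,t}$ enters the small-jump Fourier estimates only as a bounded factor inside the integral, and affects the compound-Poisson moment estimates only through its $L_\infty$ bound. Your proposal correctly identifies the bounded-multiplier observation and the role of $\alpha_2$ versus $\alpha_1$, but misses the structural decomposition that makes the estimate actually provable.
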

\begin{proof}
\noindent We only provide a sketch of the proof. The key idea is that
the integral over the region $\left|y\right|\leq1$ in (\ref{eq:pbar})
provides exponential decay of the symbol. Indeed, for $\left|\xi\right|\geq1,\hat{\xi}=\xi/\left|\xi\right|,$

\begin{align*}
 & \int_{\left|y\right|\leq1}\left[e^{i2\pi\left|\xi\right|\hat{\xi}\cdot y}-1-i2\pi\left|\xi\right|y\cdot\hat{\xi}\chi_{\sigma}\left(y\right)\right]\bar{m}_{R}^{s,t}\left(y\right)\tilde{\nu}_{R}\left(dy\right)\\
 & =\frac{w\left(R\right)}{w\left(\left|\xi\right|^{-1}R\right)}\int_{\left|y\right|\leq\left|\xi\right|}\left[e^{i2\pi\hat{\xi}\cdot y}-1-i2\pi y\cdot\hat{\xi}\chi_{\sigma}\left(y\right)\right]\bar{m}_{R}^{s,t}\left(\left|\xi\right|^{-1}y\right)\tilde{\nu}_{\left|\xi\right|^{-1}R}\left(dy\right)
\end{align*}

Therefore, by Lemma \ref{lem:alpha12_ratio}\textbf{ }and applying
a trigonometric inequality $1-\cos\left(x\right)\geq x^{2}/\pi$ for
$\left|x\right|\leq\pi/2$, we obtain 

\noindent 
\begin{align*}
 & \mathfrak{R}\left\{ \frac{w\left(R\right)}{w\left(\left|\xi\right|^{-1}R\right)}\int_{\left|y\right|\leq\left|\xi\right|}\left[e^{i2\pi\hat{\xi}\cdot y}-1-i2\pi y\cdot\hat{\xi}\chi_{\sigma}\left(y\right)\right]\bar{m}_{R}^{s,t}\left(\left|\xi\right|^{-1}y\right)\tilde{\nu}_{\left|\xi\right|R}\left(dy\right)\right\} \\
\leq & ck\left|\xi\right|^{\alpha_{2}}\int_{\left|y\right|\leq\left|\xi\right|}\left[\text{\ensuremath{\cos}}\left(2\pi\hat{\xi}\cdot y\right)-1\right]\tilde{\nu}_{\left|\xi\right|R}\left(dy\right)\\
\leq & -c\left|\xi\right|^{\alpha_{2}}\int_{\left|y\right|\leq1/4}\left|\hat{\xi}\cdot y\right|^{2}\tilde{\nu}_{\left|\xi\right|R}\left(dy\right)\leq-c\left|\xi\right|^{\alpha_{2}}.
\end{align*}

The above estimate and Proposition I.2.5 in \cite{Sat} ensures that
$\bar{p}{}^{m_{R},\tilde{\nu}_{R}}\left(s,t\right)$ and hence $p^{m,\nu}\left(s,t\right)$
are probability densities. The proof of estimates are basically a
repetition of \cite[Lemmas 6-8]{MPh3} with $\bar{m}_{R}^{s,t}\left(y\right)\tilde{\nu}_{R}\left(dy\right)$
instead of $\tilde{\nu}_{R}\left(dy\right)$ (see also references
therein.) The presence of bounded $\bar{m}_{R}^{s,t}\left(y\right)$
does not cause any additional difficulties. 
\end{proof}
We now investigate further some preparatory estimates. To ease notation,
we will write $\bar{p}^{R}\left(s,t\right)=\bar{p}^{m_{R},\tilde{\nu}_{R}}\left(s,t\right)$.
These fundamental estimates are essential for verifying Hörmander
condition. 

Recall that under the assumptions of Lemma \ref{lem:timeDensity},
we have from (\ref{eq:scale_symb})
\[
\exp\left\{ \int_{s}^{t}\psi^{m,\nu}\left(r,\xi\right)dr\right\} =\exp\left\{ \psi^{\bar{m}_{a\left(t-s\right)}^{s,t},\tilde{\nu}_{a\left(t-s\right)}}\left(a\left(t-s\right)\xi\right)\right\} .
\]

\noindent For $\pi,\mu\in\mathfrak{A}^{\sigma}$,

\begin{align*}
 & \psi^{\pi}\exp\left\{ \int_{s}^{t}\psi^{m,\nu}\left(r,\xi\right)dr\right\} \\
 & =\frac{1}{t-s}\psi^{\tilde{\pi}_{a\left(t-s\right)}}\left(a\left(t-s\right)\xi\right)\exp\left\{ \psi^{\bar{m}_{a\left(t-s\right)}^{s,t},\tilde{\nu}_{a\left(t-s\right)}}\left(a\left(t-s\right)\xi\right)\right\} 
\end{align*}

We deduce by taking the Fourier inverse that 
\begin{equation}
L^{\pi}p^{m,\nu}\left(s,t,x\right)=\frac{1}{t-s}a\left(t-s\right)^{-d}\left(L^{\tilde{\pi}_{a\left(t-s\right)}}\bar{p}^{a\left(t-s\right)}\right)\left(s,t,\frac{x}{a\left(t-s\right)}\right).\label{eq:LpiP}
\end{equation}

\noindent Similarly, 
\begin{equation}
L^{\pi}L^{\mu}p^{m,\nu}\left(s,t,x\right)=\frac{1}{\left(t-s\right)^{2}}a\left(t-s\right)^{-d}\left(L^{\tilde{\pi}_{a\left(t-s\right)}}L^{\tilde{\mu}_{a\left(t-s\right)}}L\bar{p}^{a\left(t-s\right)}\right)\left(s,t,\frac{x}{a\left(t-s\right)}\right).\label{eq:LLp}
\end{equation}

\begin{lem}
\noindent \label{lem:prelim_hormander} Let $\nu\in\mathfrak{A}^{\sigma},w=w_{\nu}$
be a continuous O-RV function and \textbf{A, B} hold. Let $0<k\leq m\left(t,y\right)\leq K,t\in\left[0,T\right],y\in\mathbf{R}^{d}$
and $\pi\prec\nu\left(M,\alpha_{1},\alpha_{2}\right).$ Then 

\medskip

\noindent (i) For any $\beta\in\left[0,\alpha_{2}\right]$, there
exists $N=N\left(\beta,\nu,k,K,M\right)>0$ such that for all $0<s<t\leq T$
and $c>0$,
\[
\int_{s}^{t}\int_{\left|x\right|>c}\left|L^{\pi}p^{m,\nu}\left(r,t,x\right)\right|dxdr\leq Nc^{-\beta}a\left(t-s\right)^{\beta}.
\]

\noindent (ii) For $0<b<t\leq T$, $h\in\mathbf{R}^{d},$ there exists
$N=N\left(\nu,k,K,M\right)>0$ such that 
\[
\int_{0}^{b}\int\left|L^{\pi}p^{m,\nu}\left(r,t,x+h\right)-L^{\pi}p^{m,\nu}\left(r,t,x\right)\right|dxdr\leq N\left|h\right|a\left(t-b\right)^{-1}.
\]

\noindent (iii) For $0<b<s<t\leq T$, there exists $N=N\left(\nu,k,K,M\right)>0$
such that
\[
\int_{0}^{b}\int\left|L^{\pi}p^{m,\nu}\left(r,t,x\right)-L^{\pi}p^{m,\nu}\left(r,s,x\right)\right|dxdr\leq N\left(t-s\right)\left(s-b\right)^{-1}.
\]
\end{lem}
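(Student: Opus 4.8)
The strategy is to reduce everything to the scaled estimates of Lemma \ref{lem:timeDensity} via the scaling identity \eqref{eq:LpiP}, together with the asymptotic properties of $a(\cdot)$ and $w_\nu$ (these come from the O-RV assumption and are collected in the Appendix). Throughout I will set $R=a(t-r)$ (or $a(t-s)$, etc.) and exploit that $\bar p^{R}(s,t)=\bar p^{m_R,\tilde\nu_R}(s,t)$ and $L^{\tilde\pi_R}\bar p^{R}(s,t)$ satisfy bounds uniform in $R$ and in $s<t$.

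\emph{Part (i).} Starting from \eqref{eq:LpiP}, change variables $x\mapsto a(t-r)x$ to get
\[
\int_{|x|>c}\bigl|L^\pi p^{m,\nu}(r,t,x)\bigr|\,dx
=\frac{1}{t-r}\int_{|x|>c/a(t-r)}\bigl|L^{\tilde\pi_{a(t-r)}}\bar p^{a(t-r)}(r,t,x)\bigr|\,dx .
\]
On the region $|x|>c/a(t-r)$ one has $1\le (a(t-r)/c)^\beta|x|^\beta$, so the inner integral is bounded by $(a(t-r)/c)^\beta\int(1+|x|^{\alpha_2})|L^{\tilde\pi_{a(t-r)}}\bar p^{a(t-r)}(r,t,x)|\,dx\le NM\,(a(t-r)/c)^\beta$ by Lemma \ref{lem:timeDensity}(ii) (here $\beta\le\alpha_2$ is used). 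Hence the $r$-integrand is $\le NM\,c^{-\beta}a(t-r)^\beta/(t-r)$, and it remains to show $\int_s^t a(t-r)^\beta(t-r)^{-1}\,dr\le N a(t-s)^\beta$. Writing $u=t-r$ this is $\int_0^{t-s}a(u)^\beta u^{-1}\,du$, which is controlled by $a(t-s)^\beta$ because $a$ is a (generalized) inverse of the O-RV function $w_\nu$, so $a$ is itself O-RV with positive lower index, making $u\mapsto a(u)^\beta/u^{1}$ integrable near $0$ with the right scaling; this is exactly an asymptotic-integral estimate of the type recorded in the Appendix.

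\emph{Parts (ii) and (iii).} These are the incremental (H\"ormander-type) bounds. For (ii), apply the fundamental theorem of calculus in the spatial variable: $L^\pi p^{m,\nu}(r,t,x+h)-L^\pi p^{m,\nu}(r,t,x)=\int_0^1 h\cdot\nabla_x L^\pi p^{m,\nu}(r,t,x+\theta h)\,d\theta$, so the left side of (ii) is $\le|h|\int_0^b\int|\nabla_x L^\pi p^{m,\nu}(r,t,x)|\,dx\,dr$. Using \eqref{eq:LpiP} with one extra spatial derivative (which costs a factor $a(t-r)^{-1}$ under the scaling $x\mapsto a(t-r)x$) and the $\eta$-derivative version of Lemma \ref{lem:timeDensity}(ii), the inner integral is $\le NM\,a(t-r)^{-1}/(t-r)$. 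Thus we need $\int_0^b a(t-r)^{-1}(t-r)^{-1}\,dr\le N a(t-b)^{-1}$; again substituting $u=t-r$ this is $\int_{t-b}^{t} a(u)^{-1}u^{-1}\,du$ over a region bounded away from $0$ (since $u\ge t-b$), and the O-RV scaling of $a$ gives the bound $a(t-b)^{-1}$ (the integral at large $u$ converges because $a$ has positive lower index, hence $a(u)^{-1}u^{-1}$ decays fast enough). For (iii), write the time increment as $L^\pi p^{m,\nu}(r,t,x)-L^\pi p^{m,\nu}(r,s,x)=\int_s^t \partial_\tau L^\pi p^{m,\nu}(r,\tau,x)\,d\tau$; since $p^{m,\nu}$ solves the forward equation, $\partial_\tau p^{m,\nu}(r,\tau,\cdot)=L^{m,\nu}_\tau p^{m,\nu}(r,\tau,\cdot)$, so $\partial_\tau L^\pi p^{m,\nu}(r,\tau,x)=L^\pi L^{m(\tau,\cdot),\nu}p^{m,\nu}(r,\tau,x)$, and \eqref{eq:LLp} together with the $L^{\tilde\pi_R}L^{\tilde\mu_R}L\bar p^{R}$ bound (the double-operator analogue of Lemma \ref{lem:timeDensity}(ii)/(iii)) gives $\int|\partial_\tau L^\pi p^{m,\nu}(r,\tau,x)|\,dx\le NM\,(\tau-r)^{-2}$. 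Integrating in $r\in(0,b)$ yields $\le NM\,(b-\tau)^{-1}$ on a suitable scale, and a further integration in $\tau\in(s,t)$ (using $b<s$, so $\tau-b\ge s-b$) produces $N(t-s)(s-b)^{-1}$.

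\emph{Main obstacle.} The routine parts are the change of variables and invoking Lemma \ref{lem:timeDensity}; the delicate point is each of the three elementary-looking one-dimensional time integrals $\int a(u)^{\pm\beta}u^{-k}\,du$. These are not truly elementary because $a$ is only O-regularly varying (not a power), so one cannot compute them directly; instead one must use the Potter-type bounds and the asymptotic-integral lemmas for O-RV functions from the Appendix, and keep careful track of which index ($p_1,q_1,p_2,q_2$, and the corresponding indices of $a$) controls convergence at $0$ versus at $\infty$. Making sure the hypotheses $\beta\le\alpha_2$ and $p_i>0$ are exactly what is needed for these integrals to converge with the asserted scaling is where the care is required; everything else is bookkeeping.
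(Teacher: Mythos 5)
Your proposal follows the paper's own proof step for step: all three parts reduce via the scaling identities (\ref{eq:LpiP}) and (\ref{eq:LLp}) to the uniform-in-$R$ $L^{1}$ estimates of Lemma \ref{lem:timeDensity}, and the resulting one-dimensional time integrals $\int a(u)^{\beta}u^{-1}du$, $\int a(u)^{-1}u^{-1}du$, $\int(\tau-r)^{-2}dr$ are handled by the O-RV asymptotic bounds in Corollary \ref{cor:aymp_a}, exactly as in the paper. One small slip in part (iii): the time derivative of the transition density brings in the \emph{adjoint} operator, $\partial_{t}p^{m,\nu}(s,t)=L^{m^{\ast}(t),\nu^{\ast}}p^{m,\nu}(s,t)$ with $m^{\ast}(t,y)=m(t,-y)$ and $\nu^{\ast}(dy)=\nu(-dy)$ (this is why Lemma \ref{lem:timeDensity}(iii) is stated with $\tilde{\nu}_{R}^{\ast}$); you wrote $L_{\tau}^{m,\nu}$ without the star, and you also wrote $(b-\tau)^{-1}$ where $(\tau-b)^{-1}$ is meant, but neither affects the structure or the final bound.
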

\begin{proof}
We will apply Lemma \ref{lem:timeDensity} along with (\ref{eq:LpiP})
and (\ref{eq:LLp}) repeatedly. 

\noindent (i) By changing the variable of integration, Chebyshev inequality
and Lemma \ref{lem:timeDensity} (ii),
\begin{align*}
\int_{s}^{t}\int_{\left|x\right|>c}\left|L^{\pi}p^{m,\nu}\left(r,t,x\right)\right|dxdr & =\int_{s}^{t}\frac{a\left(t-r\right)^{-d}}{t-r}\int_{\left|x\right|>c}\left|L^{\tilde{\pi}_{a\left(t-r\right)}}\bar{p}^{a\left(t-r\right)}\left(r,t,\frac{x}{a\left(t-r\right)}\right)\right|dxdr\\
 & =\int_{s}^{t}\frac{1}{t-r}\int_{\left|x\right|>ca\left(t-r\right)^{-1}}\left|L^{\tilde{\pi}_{a\left(t-r\right)}}\bar{p}^{a\left(t-r\right)}\left(r,t,x\right)\right|dxdr\\
 & \leq N\int_{s}^{t}\frac{c^{-\beta}a\left(t-r\right)^{\beta}}{t-r}dr=Nc^{-\beta}\int_{0}^{t-s}\frac{a\left(r\right)^{\beta}}{r}dr\\
 & \leq Nc^{-\beta}a\left(t-s\right)^{\beta}
\end{align*}

\noindent where we apply Corollary \ref{cor:aymp_a} in the last step.

\noindent (ii) By the fundamental theorem of calculus, changing the
variable of integration and Lemma \ref{lem:timeDensity} (ii),
\begin{align*}
 & \int_{0}^{b}\int\left|L^{\pi}p^{m,\nu}\left(r,t,x+h\right)-L^{\pi}p^{m,\nu}\left(r,t,x\right)\right|dxdr\\
\leq & \left|h\right|\int_{0}^{b}\int\int_{0}^{1}\left|L^{\pi}\nabla p^{m,\nu}\left(r,t,x+lh\right)\right|dldxdr\\
\leq & N\left|h\right|\int_{0}^{b}a\left(t-r\right)^{-1}\left(t-r\right)^{-1}dr\\
\leq & N\left|h\right|\int_{t-b}^{t}a\left(r\right)^{-1}r^{-1}dr\\
\leq & N\left|h\right|a\left(t-b\right)^{-1}
\end{align*}

\noindent where we apply Corollary \ref{cor:aymp_a} in the last inequality.

\noindent (iii) By an elementary computation, denoting $m^{\ast}\left(t,y\right)=m\left(t,-y\right)$,
\[
\partial_{t}p^{m,\nu}\left(s,t\right)=L^{m^{\ast}\left(t\right),\nu^{\ast}}p^{m,\nu}\left(s,t\right).
\]
By the fundamental theorem of calculus, changing the variable of integration,
and Lemma \ref{lem:timeDensity} (iii), 
\begin{align*}
 & \int_{0}^{b}\int\left|L^{\pi}p^{m,\nu}\left(r,t,x\right)-L^{\pi}p^{m,\nu}\left(r,s,x\right)\right|dxdr\\
\leq & \left(t-s\right)\int_{0}^{b}\int\int_{0}^{1}L^{\pi}L^{m^{\ast}\left(lt+\left(1-l\right)s\right),\nu^{\ast}}p^{m,\nu}\left(r,lt+\left(1-l\right)s,x\right)dldxdr\\
\leq & N\left(t-s\right)\int_{0}^{b}\int_{0}^{1}\frac{1}{\left(lt+\left(1-l\right)s-r\right)^{2}}dldr\\
\leq & N\left(t-s\right)\left(s-b\right)^{-1}.
\end{align*}

The proof is complete.
\end{proof}
\noindent 
For an isotropic unimodal measure without coefficient, we may derive
finer estimates (albeit as a cost of generality.) We define for $a_{1},b_{1}>0,$$d\geq1,$and
$\gamma$ in Assumption \textbf{D}, 

\noindent 
\[
\eta_{a_{1},b_{1}}^{d}\left(t,x\right)=\left[t\frac{\exp\left(-b_{1}t\gamma\left(\left|x\right|\right)^{-1}\right)}{\left|x\right|^{d}\gamma\left(\left|x\right|\right)}\chi_{t\leq a_{1}\gamma\left(\left|x\right|\right)}+\left[a_{\gamma}\left(a_{1}/t\right)\right]^{d}\chi_{t\geq a_{1}\gamma\left(\left|x\right|\right)}\right],\left(t,x\right)\in E.
\]

\begin{lem}
\noindent \label{lem:gradient} Let $\nu\left(dy\right)=j_{d}\left(\left|y\right|\right)dy\in\mathfrak{A}^{\sigma}$
and\textbf{ D} hold. Then there are $a_{i},b_{i},c_{i}>0,i=1,2,3$
such that for all $t\in\left[0,T\right],x\in\mathbf{R}_{0}^{d},$

\medskip

\noindent (i) $\left|p^{\nu}\left(t,x\right)\right|\leq c_{1}\eta_{a_{1},b_{1}}^{d}\left(t,x\right),$

\noindent (ii) $\left|D_{x}p^{\nu}\left(t,x\right)\right|\leq c_{2}\left|x\right|\eta_{a_{2},b_{2}}^{d+2}\left(t,x\right).$
\end{lem}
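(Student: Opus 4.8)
The estimate is essentially a translation of \cite[Theorem 1.5]{KR} into the scale of the profile $\gamma$, and my plan has three ingredients: reduce every quantity occurring in \cite{KR} to $\gamma$ and its inverse, check that Assumption \textbf{D} is exactly what \cite{KR} requires, and rewrite the resulting bounds in the closed form $\eta^{d}_{a,b}$ by the O-RV calculus of the Appendix.

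First I would record the basic comparisons. Since $\nu\left(dy\right)=j_{d}\left(\left|y\right|\right)dy$ with $j_{d}\left(r\right)\asymp r^{-d}\gamma\left(r\right)^{-1}$ (Assumption \textbf{D}) and $\gamma$ is non-decreasing, O-RV and satisfies Assumption \textbf{A}, a Karamata-type asymptotic for O-RV functions (Appendix; cf.\ \cite{BGT}) gives
\[
\delta_{\nu}\left(r\right)\asymp\int_{r}^{\infty}\rho^{-1}\gamma\left(\rho\right)^{-1}d\rho\asymp\gamma\left(r\right)^{-1},
\]
so $w_{\nu}\asymp\gamma$ and the left-continuous inverse $a$ of $w_{\nu}$ satisfies $a\asymp a_{\gamma}$. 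Combined with Lemma \ref{lem:symbolEst} in the case $m\equiv1$, this yields $-\mathfrak{R}\psi^{\nu}\left(\xi\right)\asymp\left|\psi^{\nu}\left(\xi\right)\right|\asymp w_{\nu}\left(\left|\xi\right|^{-1}\right)^{-1}\asymp\gamma\left(\left|\xi\right|^{-1}\right)^{-1}$, so the radial non-decreasing majorant $\psi^{\ast}$ of $-\mathfrak{R}\psi^{\nu}$ satisfies $\psi^{\ast}\left(\rho\right)\asymp\gamma\left(1/\rho\right)^{-1}$; moreover all these functions have two-sided weak scaling with exponents in the admissible range, this being the standard equivalence between O-regular variation (finite Matuszewska indices) and two-sided power bounds (\cite{AA,BGT}).

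Next I would check that Assumption \textbf{D} furnishes precisely the hypotheses of \cite[Theorem 1.5]{KR}: the process $Z^{\nu}$ is isotropic; $j_{d}$ radial and non-increasing makes $\nu$ unimodal; the weak lower/upper scaling of the characteristic exponent needed there is the content of the previous paragraph; and the regularity needed for the gradient estimate --- differentiability of $j_{d}$, monotonicity of $j_{d}$ and of $r\mapsto-r^{-1}j_{d}^{\left(1\right)}\left(r\right)$, and the matching two-sided bounds $c_{1}r^{-d-n}\gamma\left(r\right)^{-1}\leq\left(-1\right)^{n}j_{d}^{\left(n\right)}\left(r\right)\leq c_{2}r^{-d-n}\gamma\left(r\right)^{-1}$, $n=0,1$ --- is Assumption \textbf{D}(i)--(ii). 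Hence \cite[Theorem 1.5]{KR}, applied to $p^{\nu}\left(t,x\right)=p^{\nu}\left(t-0,x\right)$ for $t\in\left(0,T\right]$, delivers (since only upper bounds are sought, we may freely enlarge constants) a bound of the form $\left|p^{\nu}\left(t,x\right)\right|\leq c\left(p^{\nu}\left(t,0\right)\wedge t\,j_{d}\left(\left|x\right|\right)\right)$ together with a gradient bound $\left|D_{x}p^{\nu}\left(t,x\right)\right|\leq c\left|x\right|\rho\left(t,x\right)$, where $\rho\left(t,x\right)$ has the same structure as the right-hand side of the first bound but with the spatial dimension shifted from $d$ to $d+2$ (the dimension-walk mechanism: one differentiation trades a factor $\left|x\right|$ for two extra powers of $\left|x\right|^{-1}$ in the profile). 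Now I would translate: replace $p^{\nu}\left(t,0\right)$ and $j_{d}\left(\left|x\right|\right)$ using the previous paragraph (in particular $p^{\nu}\left(t,0\right)\asymp a\left(t\right)^{-d}$, which the O-RV calculus identifies with the on-diagonal term of $\eta^{d}$, and $j_{d}\left(\left|x\right|\right)\asymp\left|x\right|^{-d}\gamma\left(\left|x\right|\right)^{-1}$), and split according to whether $t\leq a_{1}\gamma\left(\left|x\right|\right)$ or $t>a_{1}\gamma\left(\left|x\right|\right)$: on the second region the minimum is the on-diagonal term, on the first it is $t\,j_{d}\left(\left|x\right|\right)$, and since there $t/\gamma\left(\left|x\right|\right)\leq a_{1}$ the factor $\exp\left(-b_{1}t/\gamma\left(\left|x\right|\right)\right)$ appearing in $\eta^{d}$ is bounded below by a positive constant, so the polynomial bound $t\,j_{d}\left(\left|x\right|\right)$ dominates it up to a constant. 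This gives $\left|p^{\nu}\left(t,x\right)\right|\leq c_{1}\eta^{d}_{a_{1},b_{1}}\left(t,x\right)$; running the same argument with $d$ replaced by $d+2$ (note $r\mapsto-r^{-1}j_{d}^{\left(1\right)}\left(r\right)\asymp r^{-d-2}\gamma\left(r\right)^{-1}$ is again non-increasing and carries the same O-RV factor $\gamma$) bounds $\rho\left(t,x\right)$ by $c\,\eta^{d+2}_{a_{2},b_{2}}\left(t,x\right)$, whence (ii); the restriction to $t\in\left[0,T\right]$ is automatic.

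I expect the main obstacle to be the verification that Assumption \textbf{D} matches the precise hypothesis set of \cite[Theorem 1.5]{KR} --- especially recognizing the monotonicity of $r\mapsto-r^{-1}j_{d}^{\left(1\right)}\left(r\right)$ together with the two-sided derivative bound as exactly what controls the gradient in \cite{KR}, and confirming that the O-RV hypothesis on $\gamma$ (a two-sided $\limsup$ condition) is equivalent to the weak-scaling hypothesis under which \cite{KR} is formulated. The remaining steps are routine bookkeeping with the asymptotic-integral and scaling lemmas of the Appendix; there the only delicate point is tracking how the various comparability constants propagate into the exponential rates $b_{i}$ and the crossover constants $a_{i}$, and checking that the two branches of $\eta^{d}_{a_{i},b_{i}}$ match at $t\asymp\gamma\left(\left|x\right|\right)$.
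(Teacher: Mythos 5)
Your proposal follows essentially the same two-step route as the paper — a density upper bound in the form $t\,j_d(|x|)$ off-diagonal and $p^\nu(t,0)$ on-diagonal for part (i), and the dimension-walk device $p^{(d+2)}(t,r)=-\frac{1}{2\pi r}\frac{d}{dr}p^\nu(t,r)$ (traced to \cite{KR}) for part (ii) — and the mathematical content checks out. The one substantive difference is the source and form of the density bound in (i): the paper invokes \cite[Proposition 2.9]{CKK}, which already carries an explicit exponential-decay factor $\exp(-b\,t\,h(\theta))$, and then simplifies via $K\preceq\gamma^{-1}$, $L\preceq\gamma^{-1}$, $h\succeq\gamma^{-1}$ (Lemmas \ref{lem:al1}, \ref{lem:al2}, \ref{lem:w=00003Dgamma}) to land in the $\eta^d$ form; you instead appeal to a polynomial-type upper bound and reinstate the exponential in $\eta^d_{a,b}$ by observing that on the region $t\leq a\gamma(|x|)$ the factor $\exp(-bt/\gamma(|x|))$ is bounded below by $e^{-ab}$, so the two expressions are comparable there. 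That shortcut is valid and arguably cleaner. One caveat: \cite[Theorem 1.5]{KR} is specifically the gradient/dimension-walk statement, not an on-/off-diagonal density estimate, so the polynomial bound $|p^\nu(t,x)|\preceq p^\nu(t,0)\wedge t\,j_d(|x|)$ needs a separate citation (e.g.\ \cite[Proposition 2.9]{CKK}, as the paper does, or the standard unimodal-Lévy density estimates it rests on); as written, both bounds are attributed to one theorem that only gives one of them. With that attribution fixed, your verification that Assumption \textbf{D} supplies the hypotheses (unimodality, two-sided scaling via $w_\nu\asymp\gamma$, and the matched two-sided bounds on $j_d$ and $-r^{-1}j_d^{(1)}$) matches what the paper checks, and the case-split at $t=a_1\gamma(|x|)$ reproduces the two branches of $\eta^d_{a,b}$ exactly as in the paper's proof.
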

\begin{proof}
\noindent (i) The first estimate follows from \cite[Proposition 2.9]{CKK}
whose assumptions can be easily verified due to our assumption \textbf{D}
(for $n=0$.) Indeed, there are $a_{1},b_{1},c_{1}>0$ such that
\begin{equation}
\left|p^{\nu}\left(t,x\right)\right|\leq c_{1}t\frac{K\left(\theta_{a_{1}}\left(\left|x\right|,t\right)\right)}{\theta_{a_{1}}\left(\left|x\right|,t\right)^{d}}\exp\left(-b_{1}th\left(\theta_{a_{1}}\left(\left|x\right|,t\right)\right)\right),\left(t,x\right)\in E.\label{eq:prob0}
\end{equation}

\noindent where $\theta_{a_{1}}\left(r,t\right)=r\vee\left[a_{\gamma}\left(a_{1}/t\right)\right]^{-1}$
and $a_{\gamma}$ is a right continuous inverse of $\bar{\gamma}\left(r\right)=\gamma\left(r^{-1}\right)^{-1},r>0$
and
\[
h\left(r\right)=K\left(r\right)+L\left(r\right)=r^{-2}\int_{0}^{r}s\gamma\left(s\right)^{-1}ds+\int_{r}^{\infty}s^{-1}\gamma\left(s\right)^{-1}ds,\hspace{1em}r>0.
\]

By Lemmas \ref{lem:al1}, \ref{lem:al2}, $K\left(r\right)\preceq\gamma\left(r\right)^{-1},L\left(r\right)\preceq\gamma\left(r\right)^{-1}$
and by Lemma \ref{lem:w=00003Dgamma}, $h\left(r\right)\succeq L\left(r\right)\succeq\gamma\left(r\right)^{-1}$.
Consequently, (\ref{eq:prob0}) can be simplified to 

\noindent 
\[
\left|p^{\nu}\left(t,x\right)\right|\leq c_{1}t\left|\theta_{a_{1}}\left(\left|x\right|,t\right)\right|^{-d}\gamma\left(\theta_{a_{1}}\left(\left|x\right|,t\right)\right)^{-1}\exp\left(-b_{1}t\gamma\left(\theta_{a_{1}}\left(\left|x\right|,t\right)\right)^{-1}\right).
\]

We note that since $\bar{\gamma}$ is an non-decreasing continuous
function, $\bar{\gamma}\left(a_{\gamma}\left(r\right)\right)=r.$

\noindent If $\left|x\right|\geq\left[a_{\gamma}\left(a_{1}/t\right)\right]^{-1}$
then $t\leq a_{1}\gamma\left(\left|x\right|\right)$, $\theta_{a_{1}}\left(\left|x\right|,t\right)=\left|x\right|$
and thus
\[
\left|p^{\nu}\left(t,x\right)\right|\leq c_{1}t\left|x\right|^{-d}\gamma\left(\left|x\right|\right)^{-1}\exp\left(-b_{1}t\gamma\left(\left|x\right|\right)^{-1}\right).
\]

\noindent If $\left|x\right|<\left[a_{\gamma}\left(a_{1}/t\right)\right]^{-1}$
then $t\geq a_{1}\gamma\left(\left|x\right|\right)$, $\gamma\left(\theta_{a_{1}}\left(\left|x\right|,t\right)\right)^{-1}=\gamma\left(\left[a_{\gamma}\left(a_{1}/t\right)\right]^{-1}\right)^{-1}=a_{1}/t$
and thus
\begin{align*}
\left|p^{\nu}\left(t,x\right)\right| & \leq c_{1}\left[a_{\gamma}\left(a_{1}/t\right)\right]^{d}.
\end{align*}

\noindent (ii) Now for the gradient, according to the proof of \cite[Theorem 1.5]{KR}
(see also \cite[Remark 2.3]{KP}), there exists a Lévy process $Z_{t}^{\left(d+2\right)}$
in $\mathbf{R}^{d+2}$ with the characteristic exponent $\psi^{\left(d+2\right)}\left(\xi\right)=\psi^{\nu_{d}}\left(\left|\xi\right|\right),\xi\in\mathbf{R}^{d+2}$
and whose radial, radially non-increasing transitional probability
satisfies

\noindent 
\[
p^{\left(d+2\right)}\left(t,r\right)=-\frac{1}{2\pi r}\frac{d}{dr}p^{\nu}\left(t,r\right),\hspace{1em}r>0.
\]

Moreover, the kernel is given by $j_{d+2}\left(r\right)=-\frac{1}{2\pi r}\frac{d}{dr}j_{d}\left(r\right),r>0$
and Assumption \textbf{D }(ii) for the gradient of $j_{d}$ becomes
\[
c_{1}r^{-d-2}\gamma\left(r\right)^{-1}\leq j_{d+2}\left(r\right)\leq c_{2}r^{-d-2}\gamma\left(r\right)^{-1},r>0.
\]

Hence, applying (i) for $p^{\left(d+2\right)}$ we obtain the upper
bound for the gradient of $p^{\nu}.$ 
\end{proof}
\noindent 
\begin{cor}
\noindent \label{cor:diff_D} Let $\nu\left(dy\right)=j_{d}\left(\left|y\right|\right)dy\in\mathfrak{A}^{\sigma}$
and\textbf{ D} hold. Then there exists $N>0$ such that 

\noindent (i) if $1<\frac{d}{q_{1}}\wedge\frac{d}{q_{2}}$ , then
$\int_{0}^{\infty}\left|p^{\nu}\left(t,a\right)\right|dt\leq N\frac{\gamma\left(\left|a\right|\right)}{\left|a\right|^{d}},a\in\mathbf{R}_{0}^{d}.$ 

\noindent (ii) $\int_{0}^{\infty}\left|p^{\nu}\left(t,a+b\right)-p^{\nu}\left(t,a\right)\right|dt\leq Nb\int_{0}^{1}\frac{\gamma\left(\left|a+lb\right|\right)}{\left|a+lb\right|^{d+1}}dl,$
for $a,b\in\mathbf{R}^{d}$ satisfying $\left|a+lb\right|\neq0$ for
all $l\in\left[0,1\right].$
\end{cor}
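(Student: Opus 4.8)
The plan is to derive Corollary \ref{cor:diff_D} directly from the pointwise kernel bounds in Lemma \ref{lem:gradient} by integrating the explicit profile $\eta_{a_{1},b_{1}}^{d}$ in time and using the asymptotic/scaling properties of the O-RV function $\gamma$ collected in the Appendix.

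\emph{Part (i).} First I would fix $a\in\mathbf{R}_{0}^{d}$, write $r=\left|a\right|$, and use Lemma \ref{lem:gradient}(i) to bound $\int_{0}^{\infty}\left|p^{\nu}\left(t,a\right)\right|dt\leq c_{1}\int_{0}^{\infty}\eta_{a_{1},b_{1}}^{d}\left(t,a\right)dt$. I then split the time integral at $t=a_{1}\gamma\left(r\right)$. On the range $t\leq a_{1}\gamma\left(r\right)$ the integrand is $t\,\exp\left(-b_{1}t\gamma\left(r\right)^{-1}\right)/\left(r^{d}\gamma\left(r\right)\right)$; extending the upper limit to $\infty$ and computing $\int_{0}^{\infty}t\,\mathrm{e}^{-b_{1}t\gamma\left(r\right)^{-1}}dt=b_{1}^{-2}\gamma\left(r\right)^{2}$ gives a contribution $\preceq \gamma\left(r\right)/r^{d}$. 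On the range $t\geq a_{1}\gamma\left(r\right)$ the integrand is $\left[a_{\gamma}\left(a_{1}/t\right)\right]^{d}$, which decays as $t\to\infty$ precisely because $a_{\gamma}\left(a_{1}/t\right)\to 0$; here I would change variables $s=a_{1}/t$ and use that $\bar\gamma\left(a_{\gamma}\left(s\right)\right)=s$ together with the upper-index bound on $\gamma$ (equivalently, a lower-index/polynomial bound on $a_\gamma$) coming from $d/q_{1}\wedge d/q_{2}>1$. This is where the hypothesis $1<\frac{d}{q_{1}}\wedge\frac{d}{q_{2}}$ enters: it guarantees that $\int^{\infty}\left[a_{\gamma}\left(a_{1}/t\right)\right]^{d}dt$ converges and is controlled by its value at the endpoint $t\asymp\gamma\left(r\right)$, namely $r^{d}$ — wait, more precisely the endpoint evaluation gives $\left[a_\gamma\left(a_1/(a_1\gamma(r)))\right]^{d}$, and since $\bar\gamma$ and $a_\gamma$ are mutual inverses this is $\asymp r^{d}$, but after the time integration the extra power produces an overall $\gamma(r)\cdot r^{d-?}$; one checks via the appendix scaling lemmas that both pieces combine to $\preceq \gamma\left(\left|a\right|\right)/\left|a\right|^{d}$, which is the claim.

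\emph{Part (ii).} For the difference estimate I would apply the fundamental theorem of calculus along the segment $\ell\mapsto a+\ell b$ (legitimate since $\left|a+\ell b\right|\neq 0$ throughout), obtaining
\[
\int_{0}^{\infty}\left|p^{\nu}\left(t,a+b\right)-p^{\nu}\left(t,a\right)\right|dt\leq\left|b\right|\int_{0}^{1}\int_{0}^{\infty}\left|D_{x}p^{\nu}\left(t,a+\ell b\right)\right|dt\,d\ell,
\]
then insert the gradient bound from Lemma \ref{lem:gradient}(ii), namely $\left|D_{x}p^{\nu}\left(t,x\right)\right|\leq c_{2}\left|x\right|\,\eta_{a_{2},b_{2}}^{d+2}\left(t,x\right)$. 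The time integral $\int_{0}^{\infty}\eta_{a_{2},b_{2}}^{d+2}\left(t,x\right)dt$ is estimated exactly as in Part (i) but with $d$ replaced by $d+2$; note that the dimensional condition needed there, $1<\frac{d+2}{q_{1}}\wedge\frac{d+2}{q_{2}}$, is automatic since $q_{i}<2<d+2$ — wait, one needs $q_i < d+2$, which holds because $q_i<2\le d+1 < d+2$ for $d\ge1$, so no extra hypothesis is required for the gradient part. This yields $\int_{0}^{\infty}\eta_{a_{2},b_{2}}^{d+2}\left(t,x\right)dt\preceq \gamma\left(\left|x\right|\right)/\left|x\right|^{d+2}$, so the integrand becomes $\preceq \left|x\right|\cdot\gamma\left(\left|x\right|\right)/\left|x\right|^{d+2}=\gamma\left(\left|x\right|\right)/\left|x\right|^{d+1}$ with $x=a+\ell b$; integrating in $\ell$ gives the stated bound.

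\emph{Main obstacle.} The routine part is the Gaussian-type time integral on $t\leq a_{1}\gamma$; the delicate part is the tail $t\geq a_{1}\gamma\left(\left|x\right|\right)$, where one must bound $\int\left[a_{\gamma}\left(a_{1}/t\right)\right]^{d}dt$ and show the result matches the near-diagonal contribution up to constants. This requires the O-RV scaling estimates for $a_{\gamma}$ (the inverse of $\bar\gamma$) from the Appendix — in particular Potter-type bounds translating the upper indices $q_{1},q_{2}$ of $\gamma$ into growth control of $a_{\gamma}$ — and it is precisely the convergence of this tail integral that forces the hypothesis $1<\frac{d}{q_{1}}\wedge\frac{d}{q_{2}}$ in (i). I would state the needed inverse-function scaling as a consequence of the appendix lemmas and then the rest is bookkeeping.
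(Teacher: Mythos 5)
Your proposal follows essentially the same route as the paper: integrate the pointwise bounds of Lemma \ref{lem:gradient} in time, split at $t\asymp\gamma(|x|)$, treat the Gaussian piece directly, and control the tail via the O-RV scaling lemmas in the Appendix (the paper simply does part (ii) first and says (i) is similar). You also correctly observe that the restriction $1<\frac{d}{q_1}\wedge\frac{d}{q_2}$ is needed for the tail integral in (i) (via Corollary \ref{cor:aymp_a}) but is automatic in (ii) where $d$ is replaced by $d+2$ and $q_i<2$.

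One slip in your tail computation, which you visibly sensed was off: since $a_\gamma$ is the inverse of $\bar\gamma(r)=\gamma(r^{-1})^{-1}$, Lemma \ref{lem:index_a}(iii) gives $a_\gamma\bigl(\gamma(r)^{-1}\bigr)=a_\gamma\bigl(\bar\gamma(r^{-1})\bigr)\asymp r^{-1}$, not $\asymp r$, so the endpoint evaluation is $[a_\gamma(\gamma(r)^{-1})]^{d}\asymp r^{-d}$. Applying Corollary \ref{cor:aymp_a} (after your substitution $s=a_1/t$, with $\tau=1$, $\beta=d$, which is exactly where the hypothesis $1<\frac{d}{q_1}\wedge\frac{d}{q_2}$ enters) then bounds the tail by $\gamma(r)\cdot[a_\gamma(\gamma(r)^{-1})]^{d}\asymp\gamma(r)/r^{d}$, matching the near-diagonal piece and removing the ambiguity in your $\gamma(r)\cdot r^{d-?}$ remark. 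With that corrected the argument is complete and agrees with the paper's.
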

\begin{proof}
\noindent We start with the proof for (ii). By the fundamental theorem
of calculus and Lemma \ref{lem:gradient},

\noindent 
\begin{align*}
 & \int_{0}^{\infty}\left|p^{\nu}\left(t,a+b\right)-p^{\nu}\left(t,a\right)\right|dt\\
 & \leq b\int_{0}^{1}\int_{0}^{\infty}\left|\nabla p^{\nu}\left(t,a+lb\right)\right|dtdl\\
 & \leq b\left[\int_{0}^{1}\int_{0}^{a_{1}\gamma\left(\left|a+lb\right|\right)}t\frac{\exp\left(-b_{1}t\gamma\left(\left|a+lb\right|\right)^{-1}\right)}{\left|a+lb\right|^{d+1}\gamma\left(\left|a+lb\right|\right)}dtdl\right.\\
 & +\left.\int_{0}^{1}\int_{a_{1}\gamma\left(\left|a+lb\right|\right)}^{\infty}\left|a+lb\right|\left[a_{\gamma}\left(a_{1}t^{-1}\right)\right]^{d+2}dtdl\right]\\
 & =b\left(\mathcal{I}_{1}+\mathcal{I}_{2}\right).
\end{align*}

\noindent By direct integration, $\mathcal{I}_{1}\leq N\int_{0}^{1}\frac{\gamma\left(\left|a+lb\right|\right)}{\left|a+lb\right|^{d+1}}dl.$
Applying Corollary \ref{cor:aymp_a} and Lemma \ref{lem:index_a}
(iii) for $a_{\gamma}$, 
\begin{align*}
\mathcal{I}_{2} & =\int_{0}^{1}\int_{a_{1}\gamma\left(a+lb\right)}^{\infty}\left|a+lb\right|\left[a_{\gamma}\left(a_{1}t^{-1}\right)\right]^{d+2}dtdl\\
 & \leq N\int_{0}^{1}\left|a+lb\right|\left[a_{\gamma}\left(\gamma\left(\left|a+lb\right|\right)^{-1}\right)\right]^{d+2}\gamma\left(\left|a+lb\right|\right)dl\\
 & \leq N\int_{0}^{1}\frac{\gamma\left(\left|a+lb\right|\right)}{\left|a+lb\right|^{d+1}}dl.
\end{align*}

We derive (i) similarly. The restriction $1<\frac{d}{q_{1}}\wedge\frac{d}{q_{2}}$
comes from Corollary \ref{cor:aymp_a}. We note that if $\sigma\in\left(0,1\right)$,
then we always have $1<\frac{d}{q_{1}}\wedge\frac{d}{q_{2}}$ due
to Assumption \textbf{A}.
\end{proof}
The following representation of difference can be proved by following
closely the argument in \cite[Lemma 8]{MPh2} where the result was
derived under different assumptions. We state the result without proof.
\begin{lem}
\noindent \label{lem:diff_frac} Let $\delta\in\left(0,2\right)$,
$\nu\in\mathfrak{A}^{\sigma},w=w_{\nu}$ is a continuous O-RV function
and \textbf{A, B} hold. 

\noindent Define 
\[
\psi^{\nu;\delta}=\begin{cases}
\psi^{\nu} & \text{if}\hspace{1em}\delta=1\\
-\left(-\psi^{\nu_{sym}}\right)^{\delta} & \text{if}\hspace{1em}\delta\in\left(0,2\right),\delta\neq1,
\end{cases}
\]

\noindent and 
\[
L_{\delta}^{\nu}u=\mathcal{F}^{-1}\left[\psi^{\nu;\delta}\mathcal{F}u\right],u\in\mathcal{S}\left(\mathbf{R}^{d}\right).
\]

\noindent Assume that $0<\delta<\frac{1}{q_{1}}\wedge\frac{1}{q_{2}}$
then there is a constant $c>0$ so that for any $u\in\mathcal{S}\left(\mathbf{R}^{d}\right)$,
\[
u\left(x+z\right)-u\left(x\right)=c\int k^{\nu;\delta}\left(y,z\right)L_{\delta}^{\nu}u\left(x-y\right)dy,x\in\mathbf{R}^{d},z\in\mathbf{R}_{0}^{d},
\]

\noindent where the constant $c=c\left(\delta,d,\nu\right)$, $k^{\nu;1}=\bar{k}^{\nu;1},k^{\nu;\delta}=\bar{k}^{\nu_{sym};\delta}$
if $\delta\neq1$, and $\bar{k}^{\nu;\delta}$ (and respectively $\bar{k}^{\nu_{sym};\delta}$)
is defined by 
\[
\bar{k}^{\nu;\delta}\left(y,z\right)=w\left(\left|z\right|\right)^{\delta}\left|z\right|^{-d}\int_{0}^{\infty}t^{\delta}\left[p^{\tilde{\nu}_{\left|z\right|}}\left(t,\frac{y}{\left|z\right|}+\hat{z}\right)-p^{\tilde{\nu}_{\left|z\right|}}\left(t,\frac{y}{\left|z\right|}\right)\right]\frac{dt}{t}.
\]

\noindent Moreover, 
\[
\int\left|k^{\nu;\delta}\left(y,z\right)\right|dy\leq w\left(\left|z\right|\right)^{\delta}.
\]
\end{lem}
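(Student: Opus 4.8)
The plan is to follow the strategy of \cite[Lemma 8]{MPh2}, adapting it to the O-RV setting. The starting point is the observation that for $u\in\mathcal S(\mathbf R^d)$ one has, at the level of Fourier transforms, the identity
\[
\widehat{u(\cdot+z)-u(\cdot)}(\xi)=\bigl(e^{i2\pi\xi\cdot z}-1\bigr)\hat u(\xi)
=\bigl(e^{i2\pi\xi\cdot z}-1\bigr)\frac{1}{\psi^{\nu;\delta}(\xi)}\,\widehat{L_\delta^\nu u}(\xi),
\]
so that, at least formally, $u(x+z)-u(x)=c\int k^{\nu;\delta}(y,z)L_\delta^\nu u(x-y)\,dy$ with $k^{\nu;\delta}(\cdot,z)$ the inverse Fourier transform of $\xi\mapsto c^{-1}(e^{i2\pi\xi\cdot z}-1)/\psi^{\nu;\delta}(\xi)$. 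The first step is therefore to make this rigorous: represent $1/\psi^{\nu;\delta}(\xi)$ (more precisely $1/(-\psi^{\nu_{sym}}(\xi))^\delta$, which is where the requirement $\delta<\tfrac{1}{q_1}\wedge\tfrac1{q_2}$ enters, via Lemma \ref{lem:symbolEst} giving $\mathfrak R\psi^{\nu_{sym}}(\xi)\le -n\,w(|\xi|^{-1})^{-1}$ together with the scaling estimates of $w$ from the Appendix) as a convergent integral
\[
\frac{1}{(-\psi^{\nu_{sym}}(\xi))^\delta}=\frac{1}{\Gamma(\delta)}\int_0^\infty t^{\delta}e^{t\psi^{\nu_{sym}}(\xi)}\,\frac{dt}{t},
\]
and likewise for $\delta=1$ with $\psi^\nu$ in place of $\psi^{\nu_{sym}}$. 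Interchanging Fourier inversion with the $t$-integral (justified by the exponential decay of the symbol coming from the $|y|\le 1$ part, exactly as in the proof of Lemma \ref{lem:timeDensity}) produces the candidate kernel
\[
\bar k^{\nu;\delta}(y,z)=\frac1{\Gamma(\delta)}\int_0^\infty t^{\delta}\bigl[p^\nu(t,y+z)-p^\nu(t,y)\bigr]\frac{dt}{t},
\]
and the stated formula for $\bar k$ is then obtained by applying the scaling relation $p^\nu(t,x)=R^{-d}\,p^{\tilde\nu_R}(t\,w(R)^{-1},x/R)$ (a special case of \eqref{eq:scale_p0} with $m=1$) with $R=|z|$, which reparametrises $t$ and pulls out the factor $w(|z|)^{\delta}|z|^{-d}$; the case $\delta\ne1$ uses $\nu_{sym}$ and hence $k^{\nu;\delta}=\bar k^{\nu_{sym};\delta}$, while $\delta=1$ uses $\nu$ itself.

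The second step is the $L_1$ bound $\int|k^{\nu;\delta}(y,z)|\,dy\le w(|z|)^{\delta}$. After the change of variables this reduces to showing
\[
\frac1{\Gamma(\delta)}\int\int_0^\infty t^{\delta}\bigl|p^{\tilde\nu_{|z|}}(t,y+\hat z)-p^{\tilde\nu_{|z|}}(t,y)\bigr|\,\frac{dt}{t}\,dy\le 1
\]
(possibly up to the harmless constant $c$), uniformly in $|z|$. Here one splits the $t$-integral at $t=1$. For small $t$ one uses the difference-of-translates estimate together with $\int p^{\tilde\nu_R}(t,\cdot)=1$ to gain a power of $t$; for large $t$ one uses the gradient/spatial-regularity bounds for $p^{\tilde\nu_R}$ from Lemma \ref{lem:timeDensity}(ii) (with $\pi=\nu$, $\eta$ a first-order multiindex, giving $\int|\nabla p^{\tilde\nu_R}(t,\cdot)|\preceq$ the reciprocal of a moment quantity that behaves like $t^{-1}$ after the symbol scaling), so that $t^{\delta-1}\cdot t^{-?}$ is integrable near infinity precisely when $\delta<\tfrac1{q_1}\wedge\tfrac1{q_2}$ — this is the quantitative role of the index condition, coming through Corollary \ref{cor:aymp_a}. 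Crucially, all these constants are uniform in $R=|z|$ because Lemmas \ref{lem:alpha12_integral}, \ref{lem:symbolEst}, \ref{lem:timeDensity} are themselves uniform in $R$; that uniformity is what makes the scaling normalisation $w(|z|)^\delta$ the exact bound.

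Finally, once $k^{\nu;\delta}(\cdot,z)\in L_1$ with the stated norm, the representation formula follows by Fubini and Fourier inversion applied to the Schwartz function $u$: both sides have the same Fourier transform $(e^{i2\pi\xi\cdot z}-1)\hat u(\xi)$, and $L_\delta^\nu u\in L_1\cap L_\infty$ so the convolution $\int k^{\nu;\delta}(y,z)L_\delta^\nu u(x-y)\,dy$ is well defined and continuous in $x$. I expect the main obstacle to be the uniform-in-$R$ control of the large-$t$ tail $\int_1^\infty t^{\delta-1}\bigl(\int|p^{\tilde\nu_R}(t,y+\hat z)-p^{\tilde\nu_R}(t,y)|\,dy\bigr)dt$: one must extract genuine decay in $t$ for the spatial-difference of $p^{\tilde\nu_R}$ that is faster than $t^{-\delta}$, and this requires combining Lemma \ref{lem:timeDensity}(ii) with the two-sided scaling of $w$ at zero \emph{and} at infinity (Appendix), since as $t$ ranges over $(1,\infty)$ the relevant argument of $w$ sweeps through both regimes. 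Once that decay is pinned down with a constant independent of $R$, the rest is bookkeeping that parallels \cite[Lemma 8]{MPh2}.
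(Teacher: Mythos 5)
The paper itself gives no proof of this lemma; it simply cites \cite[Lemma 8]{MPh2} and states the result. Your reconstruction is, however, exactly the right line of argument and matches what that reference does: factor $(e^{i2\pi\xi\cdot z}-1)\hat u = \tfrac{e^{i2\pi\xi\cdot z}-1}{\psi^{\nu;\delta}}\widehat{L_\delta^\nu u}$, represent $(-\psi^{\nu_{sym}})^{-\delta}=\Gamma(\delta)^{-1}\int_0^\infty t^{\delta}e^{t\psi^{\nu_{sym}}}\,dt/t$ (here $\mathfrak R\psi^{\nu_{sym}}\le -nw(|\xi|^{-1})^{-1}$ from Lemma \ref{lem:symbolEst} guarantees absolute convergence), invert the Fourier transform under the $t$-integral to obtain the unscaled kernel $c\int_0^\infty t^{\delta}[p(t,y+z)-p(t,y)]\,dt/t$, and then use the scaling identity $p^{\nu}(t,y)=R^{-d}p^{\tilde\nu_R}(t/w(R),y/R)$ with $R=|z|$, together with Appendix asymptotics for the substituted time variable, to land on the stated form and the $L_1$ bound with $w(|z|)^\delta$ as the correct scale.

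Two small points of imprecision, neither fatal. First, for the small-$t$ part of the $L_1$ estimate you do not need to gain a power of $t$: the crude bound $\int|p^{\tilde\nu_R}(s,\cdot+\hat z)-p^{\tilde\nu_R}(s,\cdot)|\le 2$ together with $\int_0^1 s^{\delta-1}ds<\infty$ (since $\delta>0$) already closes that range; the index hypothesis $\delta<\tfrac1{q_1}\wedge\tfrac1{q_2}$ is needed only for the tail $\int_1^\infty s^{\delta-1}a_{\tilde\nu_R}(s)^{-1}\,ds$, where $a_{\tilde\nu_R}(s)=R^{-1}a(sw(R))$ and Corollary \ref{cor:aymp_a}(ii) (with $\gamma=-1$, $\tau=\delta$) gives uniformity in $R$. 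Second, the gradient bound you want, $\int|\nabla p^{\tilde\nu_R}(s,\cdot)|\preceq a_{\tilde\nu_R}(s)^{-1}$, does not come from Lemma \ref{lem:timeDensity}(ii) as written (that statement carries an extra $L^{\tilde\pi_R}$); it is the plain derivative estimate $\int(1+|x|^{\alpha_2})|D^\eta\bar p^{\tilde\nu_R}|\,dx\le N$ uniformly in $R$, which is \cite[Lemma 6]{MPh3} and which Lemma \ref{lem:timeDensity} is implicitly built on. With those adjustments your argument is complete, and the final step (Fubini once $k(\cdot,z)\in L_1$, both sides having Fourier transform $(e^{i2\pi\xi\cdot z}-1)\hat u(\xi)$) is exactly as you describe.
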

\begin{rem}
Note that in the above Lemma, $L_{\delta}^{\nu}=L^{\nu;\delta}$ if
$\delta\neq1$ and $L_{1}^{\nu}=L^{\nu}.$
\end{rem}

\section{Continuity of Operator $m\left(y\right)\nu\left(dy\right)$}

We now present $L_{p}-$continuity of operator $\pi$ when $\pi\prec\nu.$
In particular, we will apply the following Lemma to $\pi\left(dy\right)=m\left(y\right)\nu\left(dy\right)$
where $m$ is bounded. 
\begin{lem}
\noindent \label{lem:(Continuity-of-Operators)} Let $\nu\in\mathfrak{A}^{\sigma},w=w_{\nu}$
be a continuous O-RV function and \textbf{A, B} hold. Suppose that
$\pi\prec\nu\left(M,\alpha_{1},\alpha_{2}\right)$ then for each $p\in\left(1,\infty\right)$
there exists $N=N\left(d,p,\nu\right)>0$ such that 
\[
\left|L^{\pi}f\right|_{L_{p}}\leq NM\left|L^{\nu}f\right|_{L_{p}},\hspace{1em}f\in\tilde{C}^{\infty}\left(\mathbf{R}^{d}\right).
\]
\end{lem}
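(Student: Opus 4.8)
The plan is to prove this via Fourier multiplier theory, realizing $L^{\pi}f$ as $T f := \mathcal{F}^{-1}[\psi^{\pi}\hat f]$ and $L^{\nu}f$ as $Sf := \mathcal{F}^{-1}[\psi^{\nu}\hat f]$, and then showing that the composite multiplier $\psi^{\pi}/\psi^{\nu_{sym}}$ (or rather $\psi^{\pi}/\psi^{\nu}$, after passing through the symmetrization using Assumption \textbf{B}) defines a bounded operator on $L_{p}$ with the desired bound proportional to $M$. Since for $f \in \tilde C^{\infty}(\mathbf{R}^{d})$ we have $L^{\nu}f \in L_{p}$ (and $L^{\nu_{sym};1}f$ as well, with comparable norms by Assumption \textbf{B} and the remarks following the definition of $H_{p}^{\nu}$), it suffices to write, at least formally, $L^{\pi}f = \mathcal{K} * (L^{\nu}g)$ for an appropriate kernel $\mathcal{K}$ and then invoke the Calder\'on--Zygmund theorem. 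Concretely, I would set $g = L^{\nu}f$ (so $\hat g = \psi^{\nu}\hat f$, modulo the $\nu$ vs.\ $\nu_{sym}$ issue) and express $L^{\pi}f$ as a singular integral against $g$ with kernel obtained by inverting the ratio of symbols.

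The key steps, in order. First, reduce to the symmetrized measure: using Lemma \ref{lem:symbolEst}, $|\psi^{\pi}(\xi)| \leq N M\, w(|\xi|^{-1})^{-1}$ and $\mathfrak{R}\{\psi^{\nu_{sym}}(\xi)\} \leq -n\, w(|\xi|^{-1})^{-1}$, so the ratio $\psi^{\pi}(\xi)/\psi^{\nu_{sym}}(\xi)$ is bounded by $NM/n$ uniformly in $\xi$; this handles the $L_{2}$ case immediately by Plancherel and already exhibits the linear dependence on $M$. Second, realize the operator as a convolution: write, using the scaling identities for transition densities and the representation of $L^{\pi}p^{m,\nu}(s,t,x)$ in (\ref{eq:LpiP}), a kernel $\mathcal{K}^{\pi,\nu}(x)$ (for the case $m\equiv 1$ here, using $Z_t^{\nu}$) such that $L^{\pi}f = c\,\mathcal{K}^{\pi,\nu} * L^{\nu_{sym};1}f$, by integrating the heat semigroup in time: $\mathcal{K}^{\pi,\nu}(x) \sim \int_{0}^{\infty} L^{\pi}p^{\nu_{sym}}(t,x)\,dt$ suitably interpreted, so that on the Fourier side this produces $\psi^{\pi}(\xi)/(-\psi^{\nu_{sym}}(\xi))$. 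Third, verify the H\"ormander condition $\int_{|x|>2|h|}|\mathcal{K}^{\pi,\nu}(x+h)-\mathcal{K}^{\pi,\nu}(x)|\,dx \leq N M$: this is exactly where Lemma \ref{lem:prelim_hormander}(i),(ii) enter — part (ii) gives the difference estimate $\int_{0}^{b}\int |L^{\pi}p(r,t,x+h)-L^{\pi}p(r,t,x)|\,dx\,dr \preceq |h| a(t-b)^{-1}$ and part (i) controls the tail, and after the time-integration and the change of variables through the inverse function $a$ (using Corollary \ref{cor:aymp_a} on the asymptotics of $a$), these combine to the uniform-in-$h$ bound proportional to $M$. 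Fourth, apply the Calder\'on--Zygmund theorem to interpolate/extrapolate from the $L_{2}$ bound and the H\"ormander condition to all $p \in (1,\infty)$, with the constant linear in $M$. Finally, transfer back from $L^{\nu_{sym};1}$ to $L^{\nu}$ using the norm equivalence $|L^{\nu_{sym};1}f|_{L_{p}} \asymp |L^{\nu}f|_{L_{p}}$ announced in the text (itself a special case $\pi = \nu$, or provable by the same multiplier argument applied symmetrically).

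The main obstacle I expect is the rigorous construction and control of the kernel $\mathcal{K}^{\pi,\nu}$ — in particular justifying the time-integral representation $L^{\pi}f = c \int_{0}^{\infty}(L^{\pi}p^{\nu_{sym}}(t))*(L^{\nu_{sym};1}f)\,dt / (\text{something})$, i.e.\ inverting $1/\psi^{\nu_{sym}}$ as $\int_{0}^{\infty}e^{t\psi^{\nu_{sym}}}\,dt$ and checking convergence, smoothness away from the origin, and the interchange of integrals; the $\sigma = 1$ case with its principal-value structure (the cancellation hypothesis $\int_{r<|y|\leq R} y\,\nu(dy)=0$) needs care here. The verification of the H\"ormander condition itself is essentially bookkeeping once Lemma \ref{lem:prelim_hormander} is in hand — the change of variables $t \mapsto a(t)$ and splitting the $x$-integral at the scale $|x| \asymp |h|$ — so the real work is setting up the convolution representation cleanly and tracking that every constant that appears is at worst linear in $M$, which follows because $\pi$ enters only linearly through $\psi^{\pi}$ and the bound $\pi \prec \nu(M,\alpha_{1},\alpha_{2})$ is itself the linear-in-$M$ hypothesis feeding Lemmas \ref{lem:symbolEst}, \ref{lem:timeDensity}, and \ref{lem:prelim_hormander}.
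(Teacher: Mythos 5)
Your overall architecture — realize $L^{\pi}f$ as a singular integral against $L^{\nu}f$, get $L_{2}$ from the symbol bound, verify a H\"ormander condition via heat-kernel estimates, close with Calder\'on--Zygmund — is exactly the paper's route. But the step you flag as ``the main obstacle'' (rigorously constructing the kernel $\mathcal{K}^{\pi,\nu}\sim\int_{0}^{\infty}L^{\pi}p(t,\cdot)\,dt$, justifying convergence and interchange, handling $\sigma=1$) is a genuine gap in your proposal, not just an expectation: without a regularization the time integral does not converge absolutely and the identity $1/(-\psi^{\nu})=\int_{0}^{\infty}e^{t\psi^{\nu}}\,dt$ is only formal. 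The missing device is the resolvent representation: set $g=-(L^{\nu}-\epsilon)f$ for $\epsilon>0$, so that $f(x)=\int_{0}^{\infty}e^{-\epsilon t}\mathbf{E}g(x+Z_{t}^{\nu})\,dt$ (this is \cite[Corollary 5]{MPh1}), hence $L^{\pi}f(x)=\int_{0}^{\infty}e^{-\epsilon t}L^{\pi}\mathbf{E}g(x+Z_{t}^{\nu})\,dt$, and then truncate to $T_{\epsilon}g(x)=\int_{\epsilon}^{\infty}\cdots\,dt=\int m_{\epsilon}(x-y)g(y)\,dy$ with $m_{\epsilon}(x)=\int_{\epsilon}^{\infty}e^{-\epsilon t}L^{\pi}p^{\nu^{\ast}}(t,x)\,dt$. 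All the multiplier and H\"ormander bounds are then proved for $m_{\epsilon}$ uniformly in $\epsilon\in(0,1)$, and the factor $e^{-\epsilon t}$ makes every integral absolutely convergent from the start. This is what turns your heuristic into a proof, and you do not supply it.

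A secondary issue: you propose to route through $\nu_{sym}$ (bounding the multiplier $\psi^{\pi}/\psi^{\nu_{sym}}$, building the kernel from $p^{\nu_{sym}}$, then transferring back by the equivalence $|L^{\nu_{sym};1}f|_{L_{p}}\asymp|L^{\nu}f|_{L_{p}}$). But in the paper that norm equivalence (Remark \ref{rem:norm_equi}) is a \emph{corollary} of the present lemma, so invoking it here is circular unless you prove it first by an independent argument, as you briefly gesture at. The paper avoids the detour entirely: the resolvent representation produces the kernel $m_{\epsilon}$ built from $p^{\nu^{\ast}}$ directly, the $L_{2}$ bound comes from Lemma \ref{lem:symbolEst} applied to $\psi^{\pi}$ and $\psi^{\nu}$ without symmetrization, and $\nu_{sym}$ never enters. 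Your citation of Lemma \ref{lem:prelim_hormander}(i),(ii) for the H\"ormander step is harmless but slightly misaligned: the paper splits $\int_{\epsilon}^{\infty}$ at $t=w(|y|)$ and estimates $\mathcal{I}_{1},\mathcal{I}_{2}$ directly from Lemma \ref{lem:timeDensity}(ii) together with the scaling identity (\ref{eq:LpiP}), Chebyshev, and Corollary \ref{cor:aymp_a}; Lemma \ref{lem:prelim_hormander} is proved from these same ingredients and is used later for the time-dependent operator, so the bounds you want are the same ones, just one level of abstraction off.
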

\begin{proof}
\noindent Let $\epsilon>0,$and $g=-\left(L^{\nu}-\epsilon\right)f$.
By \cite[Corollary 5]{MPh1}, 
\[
f\left(x\right)=\int_{0}^{\infty}e^{-\epsilon t}\mathbf{E}g\left(x+Z_{t}^{\nu}\right)dt,x\in\mathbf{R}^{d},
\]

\noindent and thus
\[
L^{\pi}f\left(x\right)=Tg\left(x\right):=\int_{0}^{\infty}e^{-\epsilon t}L^{\pi}\mathbf{E}g\left(x+Z_{t}^{\nu}\right)dt.
\]

\noindent Let 
\begin{align*}
T_{\epsilon}g\left(x\right) & =\int_{\epsilon}^{\infty}e^{-\epsilon t}L^{\pi}\mathbf{E}g\left(x+Z_{t}^{\nu}\right)dt\\
 & =\int m_{\epsilon}\left(x-y\right)g\left(y\right)dy
\end{align*}

\noindent where $m_{\epsilon}\left(x\right)=\int_{\epsilon}^{\infty}e^{-\epsilon t}L^{\pi}p^{\nu^{\ast}}\left(t,x\right)dt$. 

We now prove that there is $N>0$ independent of $\epsilon$ so that
\[
\left|T_{\epsilon}g\right|_{L_{p}}\leq NM\left|g\right|_{L_{p}},\epsilon>0,g\in L_{p}\left(\mathbf{R}^{d}\right).
\]

\noindent First we consider the Fourier transform of $m_{\epsilon}$.
By Lemma \ref{lem:symbolEst},
\begin{align*}
\left|\hat{m_{\epsilon}}\left(\xi\right)\right| & \leq\int_{\epsilon}^{\infty}\left|\psi^{\pi}\left(\xi\right)\right|\left|\exp\left\{ \psi^{\nu}\left(\xi\right)t-\epsilon t\right\} \right|dt\\
 & \leq NM\int_{\epsilon}^{\infty}w\left(\left|\xi\right|^{-1}\right)^{-1}\exp\left\{ -cw\left(\left|\xi\right|^{-1}\right)^{-1}t-\epsilon t\right\} dt\\
 & \leq NM.
\end{align*}

\noindent Now according to \cite[Theorem 3 in Chapter 1]{Ste}, it
suffices to show that

\noindent 
\[
\int_{\left|x\right|>3\left|y\right|}\left|m_{\epsilon}\left(x-y\right)-m_{\epsilon}\left(x\right)\right|dx\leq NM,y\neq0.
\]

We split the integral, 
\begin{align*}
 & \int_{\left|x\right|>3\left|y\right|}\left|\int_{\epsilon}^{\infty}e^{-\epsilon t}L^{\pi}p^{\nu^{\ast}}\left(t,x-y\right)-e^{-\epsilon t}L^{\pi}p^{\nu^{\ast}}\left(t,x\right)dt\right|dx\\
\leq & \int_{\left|x\right|>3\left|y\right|}\left|\int_{0}^{w\left(\left|y\right|\right)}...dt\right|dx+\int_{\left|x\right|>3\left|y\right|}\left|\int_{w\left(\left|y\right|\right)}^{\infty}...dt\right|dx\\
= & \mathcal{I}_{1}+\mathcal{I}_{2}.
\end{align*}

\noindent By (\ref{eq:LpiP}), Lemma \ref{lem:timeDensity} (ii) (with
$m=1$), Chebyshev inequality, and Corollary \ref{cor:aymp_a},

\noindent 
\begin{align*}
\mathcal{I}_{1} & \leq2\int_{\left|x\right|\geq2\left|y\right|}\int_{0}^{w\left(\left|y\right|\right)}\left|L^{\pi}p^{\nu^{\ast}}\left(t,x\right)\right|dtdx\\
 & \leq NM\left|y\right|^{-\alpha_{2}}\int_{0}^{w\left(\left|y\right|\right)}t^{-1}a\left(t\right)^{\alpha_{2}}dt\\
 & \leq NM\left|y\right|^{-\alpha_{2}}a\left(w\left(\left|y\right|\right)\right)^{\alpha_{2}}\leq NM.
\end{align*}

\noindent By the fundamental theorem of calculus, (\ref{eq:LpiP}),
Lemma \ref{lem:timeDensity} (ii), Corollary \ref{cor:aymp_a}, and
Lemma \ref{lem:index_a} (iii),
\begin{align*}
\mathcal{I}_{2} & \leq\int_{\left|x\right|\geq3\left|y\right|}\int_{w\left(y\right)}^{\infty}\left|L^{\pi}p^{\nu^{\ast}}\left(t,x-y\right)-L^{\pi}p^{\nu^{\ast}}\left(t,x\right)\right|dtdx\\
 & \leq\int_{w\left(y\right)}^{\infty}\left|y\right|\int\int_{0}^{1}\left|L^{\pi}\nabla p^{\nu^{\ast}}\left(t,x-ry\right)\right|drdxdt\\
 & \leq NM\left|y\right|\int_{w\left(y\right)}^{\infty}a\left(t\right)^{-1}t^{-1}dt\leq NM\left|y\right|a\left(w\left(y\right)\right)^{-1}\\
 & \leq NM.
\end{align*}

The proof is finished.
\end{proof}
\noindent 
\begin{rem}
\noindent \label{rem:continuity} Under assumptions of Lemma \ref{lem:(Continuity-of-Operators)},
let $\left|m\left(y\right)\right|\leq K,y\in\mathbf{R}^{d}$ where

\noindent $\int_{r\leq\left|y\right|\leq R}ym\left(y\right)^{+}\nu\left(dy\right)=0$
and $\int_{r\leq\left|y\right|\leq R}ym\left(y\right)^{-}\nu\left(dy\right)=0$
for any $0<r<R$ if $\sigma=1$. We may apply Lemma \ref{lem:(Continuity-of-Operators)}
to $\pi\left(dy\right)=m\left(y\right)^{+}\nu\left(dy\right)$ and
$\pi\left(dy\right)=m\left(y\right)^{-}\nu\left(dy\right)$ respectively.
We conclude that there is $N=N\left(d,p,\nu\right)>0$ so that
\[
\left|L^{m,\nu}f\right|_{L_{p}}\leq NK\left|L^{\nu}f\right|_{L_{p}},\hspace{1em}f\in\tilde{C}^{\infty}\left(\mathbf{R}^{d}\right).
\]
\end{rem}
\noindent 
\begin{rem}
\noindent \label{rem:norm_equi}Another implication of Lemma \ref{lem:(Continuity-of-Operators)}
is that $\left|L^{\nu;1}f\right|_{L_{p}}\asymp\left|L^{\nu}f\right|_{L_{p}},f\in\tilde{C}^{\infty}\left(\mathbf{R}^{d}\right)$
and therefore the equivalent norm of $H_{p}^{\nu}$ is given by 
\[
\left|f\right|_{H_{p}^{\nu}}=\left|f\right|_{L_{p}}+\left|L^{\nu}f\right|_{L_{p}}.
\]
\end{rem}
We now provide an alternative proof of Lemma \ref{lem:(Continuity-of-Operators)}
for isotropic unimodal Lévy measure $\nu\left(dy\right)=j_{d}\left(\left|y\right|\right)dy$
with $\sigma\in\left(0,1\right)$ for which \textbf{D} holds and $\pi=m\left(y\right)\nu\left(dy\right)$.
The reason that we include this option even though it is clearly more
restrictive is because a stronger $L_{\infty}$-type Hörmander condition
(\ref{eq:Cal-Zyg}) holds in this case. One benefit of this stronger
estimate is that we can immediately claim a weighted version of continuity
which is an indispensable ingredient if one want to study (\ref{eq:mainEq})
in weighted Bessel potential spaces by a continuation of parameter
approach (cf. \cite{DK}.) At the moment, we present the next result
as an independent interest. We defer a full study on weighted $L_{p}-$boundedness
with spatially independent variable to a forthcoming paper because
it requires further understanding of embedding between classical weighted
potential spaces and generalized ones.

We now recall the definition of Muckenhoupt weights. For $p\in\left(1,\infty\right)$,
we say that a non-negative function $\omega\in A_{p}\left(\mathbf{R}^{d}\right)$
if there is $C>0$ such that for all balls $B$ in $\mathbf{R}^{d}$
we have 
\[
\left(\frac{1}{\left|B\right|}\int_{B}\omega\left(x\right)dx\right)\left(\frac{1}{\left|B\right|}\int_{B}\omega\left(x\right)^{-1/\left(p-1\right)}dx\right)^{p-1}<C.
\]

In a standard way, we denote by $L_{p,\omega}=L_{p,\omega}\left(\mathbf{R}^{d}\right)$
the space of locally integrable functions for which $\left(\int\left|f\left(x\right)\right|^{p}\omega\left(x\right)dx\right)^{1/p}<\infty.$
\begin{lem}
\noindent Let $\nu\left(dy\right)=j_{d}\left(\left|y\right|\right)dy\in\mathfrak{A}^{\sigma}$
for some $\sigma\in\left(0,1\right)$ and \textbf{D} hold. Suppose
that $\left|m\left(y\right)\right|\leq K,y\in\mathbf{R}^{d}$ then
for each $p\in\left(1,\infty\right)$ there exists $N=N\left(d,p,\nu,K\right)>0$
such that 
\[
\left|L^{m,\nu}f\right|_{L_{p,\omega}}\leq N\left|L^{\nu}f\right|_{L_{p,\omega}},\hspace{1em}f\in\tilde{C}^{\infty}\left(\mathbf{R}^{d}\right).
\]
\end{lem}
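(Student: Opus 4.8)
The plan is to follow the same strategy as in the proof of Lemma~\ref{lem:(Continuity-of-Operators)}, but to replace the $L_p$--multiplier theorem \cite[Theorem~3 in Chapter~1]{Ste} by its weighted analogue for Calder\'on--Zygmund operators with $A_p$ weights, and to replace the $L^1$ kernel estimates coming from Lemma~\ref{lem:timeDensity} by the pointwise kernel estimates available in the isotropic unimodal setting (Lemma~\ref{lem:gradient} and Corollary~\ref{cor:diff_D}). Concretely, fix $\epsilon>0$, set $g=-\left(L^{\nu}-\epsilon\right)f$, and write, exactly as before,
\[
L^{m,\nu}f\left(x\right)=\int_{0}^{\infty}e^{-\epsilon t}L^{m,\nu}\mathbf{E}g\left(x+Z_{t}^{\nu}\right)dt=\int m_{\epsilon}\left(x-y\right)g\left(y\right)dy,
\]
where $m_{\epsilon}\left(x\right)=\int_{0}^{\infty}e^{-\epsilon t}L^{m,\nu}p^{\nu^{\ast}}\left(t,x\right)dt$ (splitting $m=m^{+}+m^{-}$ if $\sigma$ required a cancellation condition, which here it does not since $\sigma\in(0,1)$). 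The Fourier bound $\left|\widehat{m_{\epsilon}}\left(\xi\right)\right|\leq NK$ is obtained verbatim from Lemma~\ref{lem:symbolEst}. The point is to upgrade the H\"ormander-type bound to an $L_{\infty}$ (pointwise kernel) estimate
\begin{equation}
\left|m_{\epsilon}\left(x\right)\right|\leq \frac{NK}{\left|x\right|^{d}},\qquad \int_{\left|x\right|>2\left|y\right|}\left|m_{\epsilon}\left(x-y\right)-m_{\epsilon}\left(x\right)\right|dx\leq NK,\label{eq:Cal-Zyg}
\end{equation}
uniformly in $\epsilon>0$ and $y\neq0$.

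The second step is to establish \eqref{eq:Cal-Zyg}. For the size bound, use $\left|m_{\epsilon}\left(x\right)\right|\leq\int_{0}^{\infty}\left|L^{m,\nu}p^{\nu^{\ast}}\left(t,x\right)\right|dt$ together with the representation $L^{m,\nu}p^{\nu^{\ast}}\left(t,x\right)=\int\left[p^{\nu^{\ast}}\left(t,x+y\right)-p^{\nu^{\ast}}\left(t,x\right)\right]m\left(y\right)\nu\left(dy\right)$ (no drift correction since $\sigma<1$), bound $|m|\leq K$, and reduce to controlling $\int_{0}^{\infty}\int\left|p^{\nu}\left(t,x+y\right)-p^{\nu}\left(t,x\right)\right|\nu\left(dy\right)\,dt$; for this, split the $y$--integral at $\left|y\right|=\left|x\right|/2$, use the gradient bound of Lemma~\ref{lem:gradient}(ii) (equivalently Corollary~\ref{cor:diff_D}(ii)) on the near region and the pointwise bound of Lemma~\ref{lem:gradient}(i) on the far region, and run the resulting radial integrals against $\nu\left(dy\right)=j_{d}\left(\left|y\right|\right)dy\asymp\left|y\right|^{-d}\gamma\left(\left|y\right|\right)^{-1}dy$, invoking the scaling of $\gamma$ from the Appendix (Assumption~\textbf{A}, so $p_{i},q_{i}\in(0,1)$) to collapse everything to $N/\left|x\right|^{d}$. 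For the smoothness bound one differentiates once in $x$ and shows $\left|\nabla m_{\epsilon}\left(x\right)\right|\leq NK\left|x\right|^{-d-1}$ by the same mechanism applied to $\nabla p^{\nu}$, i.e.\ to $p^{\left(d+2\right)}$, and then $\int_{\left|x\right|>2\left|y\right|}\left|m_{\epsilon}\left(x-y\right)-m_{\epsilon}\left(x\right)\right|dx\leq\left|y\right|\int_{\left|x\right|>2\left|y\right|}\int_{0}^{1}\left|\nabla m_{\epsilon}\left(x-l y\right)\right|dl\,dx\leq N K\left|y\right|\cdot\left|y\right|^{-1}=NK$.

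The third step is the weighted conclusion: \eqref{eq:Cal-Zyg} together with $\left|\widehat{m_{\epsilon}}\right|_{L_{\infty}}\leq NK$ shows $T_{\epsilon}g=m_{\epsilon}\ast g$ is a Calder\'on--Zygmund operator with constant $\leq NK$ independent of $\epsilon$, so by the standard weighted estimate for such operators (Coifman--Fefferman) it is bounded on $L_{p,\omega}$ for every $p\in(1,\infty)$ and every $\omega\in A_{p}\left(\mathbf{R}^{d}\right)$, with norm $\leq N\left(d,p,\nu,K,[\omega]_{A_p}\right)$. Letting $\epsilon\downarrow0$ along $g=-\left(L^{\nu}-\epsilon\right)f\to -L^{\nu}f$ in $L_{p,\omega}$ (using $f\in\tilde{C}^{\infty}$ so the relevant quantities are finite and the convergence is legitimate) gives $\left|L^{m,\nu}f\right|_{L_{p,\omega}}\leq N\left|L^{\nu}f\right|_{L_{p,\omega}}$.

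The main obstacle is the size estimate $\left|m_{\epsilon}\left(x\right)\right|\leq NK/\left|x\right|^{d}$: it requires carrying out the two-region split in $y$ and integrating the kernel bounds $\eta^{d}_{a_1,b_1}$ and $\left|x\right|\eta^{d+2}_{a_2,b_2}$ in $t$ and then against $j_d$, and one must check that the scaling exponents of $\gamma$ (controlled by $p_i,q_i\in(0,1)$ via the Appendix) make every such integral converge and produce exactly the homogeneity $\left|x\right|^{-d}$; this is the step where the restriction $\sigma\in(0,1)$ and Assumption~\textbf{D} are genuinely used (it is what removes the drift term and guarantees the time integrals like $\int_0^\infty \left|p^\nu(t,a)\right|dt<\infty$, cf.\ Corollary~\ref{cor:diff_D}(i), which needs $d/q_1\wedge d/q_2>1$, automatic here).
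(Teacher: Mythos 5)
Your overall plan --- derive pointwise Calder\'on--Zygmund kernel bounds for a convolution kernel realizing $L^{m,\nu}\circ(L^{\nu})^{-1}$ and apply the Coifman--Fefferman weighted theorem --- is the same as the paper's; only the truncation device differs (a resolvent in $t$, as in Lemma~\ref{lem:(Continuity-of-Operators)}, versus the paper's annular truncation $m_{\epsilon}(y)=m(y)\chi_{\epsilon\leq|y|\leq\epsilon^{-1}}$). The gap is in the smoothness estimate: you assert the Lipschitz bound $|\nabla m_{\epsilon}(x)|\leq NK|x|^{-d-1}$ and propose to obtain it by differentiating under the integral. This is false when $\sigma\in(0,1)$. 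Differentiating formally gives
\[
\nabla m_{\epsilon}(x)=\int_{0}^{\infty}e^{-\epsilon t}\int\bigl[\nabla p^{\nu}(t,x+y)-\nabla p^{\nu}(t,x)\bigr]m(y)\,\nu(dy)\,dt,
\]
and on the far region $|y|>|x|/2$ the time integral $\int_{0}^{\infty}|\nabla p^{\nu}(t,x+y)|\,dt$ is $\asymp\gamma(|x+y|)/|x+y|^{d+1}$ by Corollary~\ref{cor:diff_D}(ii). Integrating this against $\nu(dy)=j_{d}(|y|)dy\asymp|y|^{-d}\gamma(|y|)^{-1}dy$ near the second singularity $y=-x$ (where $|y|\asymp|x|$, so the L\'evy density contributes a fixed factor $\asymp|x|^{-d}\gamma(|x|)^{-1}$) produces, after the substitution $w=x+y$, the radial integral $\int_{0}\gamma(r)r^{-2}\,dr$, which diverges: by Lemma~\ref{lem:alpha12_ratio} one has $\gamma(r)\succeq r^{\alpha_{1}}$ for small $r$ with any $\alpha_{1}>q_{1}\vee q_{2}$, and since Assumption~\textbf{A} forces $q_{1}\vee q_{2}<1$ we may take $\alpha_{1}\in(q_{1}\vee q_{2},1)$, whence $\int_{0}r^{\alpha_{1}-2}\,dr=\infty$. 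So the formal gradient fails to be absolutely integrable near $y=-x$, and no pointwise Lipschitz kernel bound can be obtained this way (nor is it expected to hold). Relatedly, the two conditions you display --- a pointwise size bound together with an $L^{1}$ H\"ormander smoothness bound --- are not by themselves enough for the weighted conclusion: $L^{1}$ H\"ormander gives unweighted $L_{p}$ boundedness via Stein's theorem, but Coifman--Fefferman requires a genuine pointwise modulus of continuity of the kernel. The paper supplies exactly this, proving the weaker but correct \emph{H\"older} estimate $|k_{\epsilon}(x+z)-k_{\epsilon}(x)|\leq N|z|^{\eta}/|x|^{d+\eta}$ with $\eta=\min\{\alpha_{2},1-\alpha_{1}\}<1$; the proof does not differentiate the kernel but runs a four-region decomposition in $y$ adapted from \cite[Lemma~4.5]{DK}, balancing the competing singularities of $\int_{0}^{\infty}p^{\nu}$ and $\int_{0}^{\infty}\nabla p^{\nu}$ at the origin and at $y=-x$ through the two-sided scaling in Lemma~\ref{lem:alpha12_ratio}. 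Your size bound $|m_{\epsilon}(x)|\preceq K/|x|^{d}$ and the $L_{2}$ Plancherel step are correct and match the paper; replacing the false Lipschitz claim by the H\"older four-region argument is what is needed to close the proof.
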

\begin{proof}
\noindent Without loss of generality we assume $K=1$ (applying the
result to $m/K$.) Note also that we only deal with the case $\sigma\in\left(0,1\right).$ 

We set for $\epsilon\in\left(0,1\right)$, $m_{\epsilon}\left(y\right)=m\left(y\right)\chi_{\epsilon\leq\left|y\right|\leq\epsilon^{-1}}$
and for $f\in\tilde{C}^{\infty}\left(\mathbf{R}^{d}\right),x\in\mathbf{R}^{d},$ 

\noindent 
\begin{align*}
L^{m,\nu}f\left(x\right) & =\int\left[f\left(x+y\right)-f\left(x\right)\right]m\left(y\right)\nu\left(dy\right)\\
 & =\lim_{\epsilon\rightarrow0}\int\left[f\left(x+y\right)-f\left(x\right)\right]m_{\epsilon}\left(y\right)\nu\left(dy\right).
\end{align*}

\noindent Applying Lemma \ref{lem:diff_frac}, 
\begin{align*}
L^{m,\nu}f\left(x\right) & =c\lim_{\epsilon\rightarrow0}\int\int k^{\nu;1}\left(z,y\right)m_{\epsilon}\left(y\right)\nu\left(dy\right)L^{\nu}f\left(x-z\right)dz\\
 & =c\lim_{\epsilon\rightarrow0}\int k_{\epsilon}\left(z\right)L^{\nu}f\left(x-z\right)dz=c\lim_{\epsilon\rightarrow0}T_{\epsilon}L^{\nu}f.
\end{align*}

\noindent where 

\noindent 
\begin{align*}
k^{\nu;1}\left(z,y\right) & =w_{\nu}\left(\left|y\right|\right)\left|y\right|^{-d}\int_{0}^{\infty}\left[p^{\tilde{\nu}_{\left|y\right|}}\left(t,\frac{z}{\left|y\right|}+\hat{y}\right)-p^{\tilde{\nu}_{\left|y\right|}}\left(t,\frac{z}{\left|y\right|}\right)\right]dt\\
 & =w_{\nu}\left(\left|y\right|\right)\int_{0}^{\infty}\left[p^{\nu}\left(w_{\nu}\left(\left|y\right|\right)t,z+y\right)-p^{\nu}\left(w_{\nu}\left(\left|y\right|\right)t,z\right)\right]dt\\
 & =\int_{0}^{\infty}\left[p^{\nu}\left(t,z+y\right)-p^{\nu}\left(t,z\right)\right]dt
\end{align*}

\noindent and
\[
k_{\epsilon}\left(z\right)=\int\int_{0}^{\infty}\left[p^{\nu}\left(t,z+y\right)-p^{\nu}\left(t,z\right)\right]dtm_{\epsilon}\left(y\right)\nu\left(dy\right).
\]

We will use a well-known result in harmonic analysis (see \cite{C,CF})
to obtain a weighted $L_{p}$ continuity for $T_{\epsilon}$ according
to which it suffices to prove $L_{2}-$continuity and the following
Lipschitz condition: there are $N,\eta>0$ such that for any $\left|x\right|>4\left|z\right|,x,z\in\mathbf{R}^{d}$, 

\noindent 
\begin{equation}
\left|k_{\epsilon}\left(x+z\right)-k_{\epsilon}\left(x\right)\right|\leq N\frac{\left|z\right|^{\eta}}{\left|x\right|^{d+\eta}}.\label{eq:Cal-Zyg}
\end{equation}

All estimates will be independent of $\epsilon$. 

\noindent Step I. By Plancherel's identity and Lemma \ref{lem:symbolEst},
\begin{align*}
\left|T_{\epsilon}f\right|_{L_{2}} & =\left|\mathcal{F}T_{\epsilon}f\right|_{L_{2}}\\
 & =\left|\int\left(e^{i2\pi y\cdot\xi}-1\right)\int_{0}^{\infty}\exp\left\{ t\psi^{\nu^{\ast}}\left(\xi\right)\right\} dtm_{\epsilon}\left(y\right)\nu\left(dy\right)\hat{f}\left(\xi\right)\right|_{L_{2}}\\
 & \leq N\left|\int w_{\nu}\left(\left|\xi\right|^{-1}\right)\left|e^{i2\pi y\cdot\xi}-1\right|\nu\left(dy\right)\hat{f}\left(\xi\right)\right|_{L_{2}}\\
 & =N\left|\int\left|e^{i2\pi y\cdot\hat{\xi}}-1\right|\tilde{\nu}^{\left|\xi\right|^{-1}}\left(dy\right)\hat{f}\left(\xi\right)\right|_{L_{2}}\\
 & \leq N\left(\int_{\left|y\right|\leq1}\left|y\right|\tilde{\nu}^{\left|\xi\right|^{-1}}\left(dy\right)+\int_{\left|y\right|>1}\tilde{\nu}^{\left|\xi\right|^{-1}}\left(dy\right)\right)\left|f\right|_{L_{2}}\\
 & \leq N\left|f\right|_{L_{2}}.
\end{align*}

\noindent where we apply Lemma \ref{lem:alpha12_integral} in the
last inequality. 

\noindent Step II. We prove (\ref{eq:Cal-Zyg}). For this purpose,
we use the splitting from \cite[Lemma 4.5]{DK} which is a simplified
version of \cite[Lemma 15]{MPr14}. Both results provide proof of
this Lemma for $\alpha-$stable like measures. Corollary \ref{cor:diff_D}
allows us to generalize their results. 

We denote for convenience $r=\frac{\left|x\right|}{\left|z\right|}>4.$
\begin{align*}
 & \left|k_{\epsilon}\left(x+z\right)-k_{\epsilon}\left(x\right)\right|\\
 & =\left|\int\int_{0}^{\infty}\left[p^{\nu}\left(t,x+z+\left|x\right|y\right)-p^{\nu}\left(t,x+z\right)\right]dtm_{\epsilon}\left(\left|x\right|\left|y\right|\right)\left|x\right|^{d}j_{d}\left(\left|x\right|\left|y\right|\right)dy\right.\\
 & -\left.\int\int_{0}^{\infty}\left[p^{\nu}\left(t,x+\left|x\right|y\right)-p^{\nu}\left(t,x\right)\right]dtm_{\epsilon}\left(\left|x\right|\left|y\right|\right)\left|x\right|^{d}j_{d}\left(\left|x\right|\left|y\right|\right)dy\right|\\
 & \preceq\int_{\left|y\right|\leq\frac{1}{r}}\int_{0}^{\infty}\left|...\right|dtdy+\int_{\frac{1}{r}\leq\left|y\right|,\frac{1}{2}\leq\left|y+\hat{x}\right|}\int_{0}^{\infty}\left|...\right|dtdy\\
 & +\int_{\left|y+\hat{x}\right|\leq\frac{2}{r}}\int_{0}^{\infty}\left|...\right|dtdy+\int_{\frac{2}{r}\leq\left|y+\hat{x}\right|\leq\frac{1}{2}}\int_{0}^{\infty}\left|...\right|dtdy\\
 & :=\mathcal{I}_{1}+\mathcal{I}_{2}+\mathcal{I}_{3}+\mathcal{I}_{4}
\end{align*}

\noindent where
\[
\left|...\right|=\left|p^{\nu}\left(t,x+z+\left|x\right|y\right)-p^{\nu}\left(t,x+z\right)-p^{\nu}\left(t,x+\left|x\right|y\right)+p^{\nu}\left(t,x\right)\right|\left|x\right|^{d}j_{d}\left(\left|x\right|\left|y\right|\right).
\]

Before proceeding to the main proof, we note that by assumption \textbf{D},
\[
\left|x\right|^{d}j_{d}\left(\left|x\right|\left|y\right|\right)\asymp\frac{1}{\gamma\left(\left|x\right|\left|y\right|\right)\left|y\right|^{d}}.
\]

\noindent Also, we fix $0<\alpha_{2}\leq\alpha_{1}<1$ chosen as in
Lemma \ref{lem:alpha12_ratio} for $\gamma.$ 

\medskip

\textbf{Estimate of $\mathcal{I}_{1}$}: if $\left|y\right|\leq\frac{1}{r}$
then $\left|\hat{x}+ly\right|\geq\frac{1}{2}$ and $\left|\hat{x}+z/\left|x\right|+ly\right|\geq\frac{1}{2}$
for $l\in\left[0,1\right].$ 
\begin{align*}
\mathcal{I}_{1} & \preceq\int_{\left|y\right|\leq\frac{1}{r}}\int_{0}^{\infty}\left|p^{\nu}\left(t,x+\left|x\right|y\right)-p^{\nu}\left(t,x\right)\right|dt\frac{dy}{\gamma\left(\left|x\right|\left|y\right|\right)\left|y\right|^{d}}\\
 & +\int_{\left|y\right|\leq\frac{1}{r}}\int_{0}^{\infty}\left|p^{\nu}\left(t,x+z+\left|x\right|y\right)-p^{\nu}\left(t,x+z\right)\right|dt\frac{dy}{\gamma\left(\left|x\right|\left|y\right|\right)\left|y\right|^{d}}\\
 & =\mathcal{I}_{1,1}+\mathcal{I}_{1,2}.
\end{align*}

By Corollary \ref{cor:diff_D}, 

\noindent 
\begin{align*}
\mathcal{I}_{1,1} & \preceq\int_{0}^{1}\int_{\left|y\right|\leq\frac{1}{r}}\left|x\right|\frac{\gamma\left(\left|x+l\left|x\right|y\right|\right)}{\left|x+l\left|x\right|y\right|^{d+1}}\frac{dy}{\gamma\left(\left|x\right|\left|y\right|\right)\left|y\right|^{d-1}}dl\\
 & =\int_{0}^{1}\int_{\left|y\right|\leq\frac{1}{r}}\frac{\gamma\left(\left|x\right|\left|\hat{x}+ly\right|\right)}{\left|x\right|^{d}\left|\hat{x}+ly\right|^{d+1}}\frac{dy}{\gamma\left(\left|x\right|\left|y\right|\right)\left|y\right|^{d-1}}dl.
\end{align*}

We now apply Lemma \ref{lem:alpha12_ratio} to obtain 
\begin{align*}
\mathcal{I}_{1,1} & \preceq\int_{0}^{1}\int_{\left|y\right|\leq\frac{1}{r}}\frac{1}{\left|x\right|^{d}\left|\hat{x}+ly\right|^{d+1}}\left(\frac{\hat{x}+ly}{\left|y\right|}\right)^{\alpha_{1}}\frac{dy}{\left|y\right|^{d-1}}dl\\
 & =\int_{0}^{1}\int_{\left|y\right|\leq\frac{1}{r}}\frac{1}{\left|x\right|^{d}\left|\hat{x}+ly\right|^{d+1-\alpha_{1}}}\frac{1}{\left|y\right|^{d+\alpha_{1}-1}}dydl\preceq\frac{1}{\left|x\right|^{d}}\left(\frac{1}{r}\right)^{1-\alpha_{1}}\\
 & =\frac{\left|z\right|^{1-\alpha_{1}}}{\left|x\right|^{d+1-\alpha_{1}}}.
\end{align*}

The estimate for $\mathcal{I}_{1,2}$ is derived by essentially by
the same argument replacing $\hat{x}+ly$ with $\hat{x}+z/\left|x\right|+ly$.

\medskip

\textbf{Estimate of $\mathcal{I}_{2}$}: if $\left|y+\hat{x}\right|\geq\frac{1}{2}$
then $\left|\hat{x}+lz/\left|x\right|\right|\geq\frac{1}{4}$ and
$\left|\hat{x}+y+lz/\left|x\right|\right|\geq\frac{1}{4}$ for $l\in\left[0,1\right].$
\begin{align*}
 & \mathcal{I}_{2}\\
 & \preceq\int_{\frac{1}{r}\leq\left|y\right|,\frac{1}{2}\leq\left|y+\hat{x}\right|}\int_{0}^{\infty}\left|p^{\nu}\left(t,x+z+\left|x\right|y\right)-p^{\nu}\left(t,x+\left|x\right|y\right)\right|dt\frac{dy}{\gamma\left(\left|x\right|\left|y\right|\right)\left|y\right|^{d}}\\
 & +\int_{\frac{1}{r}\leq\left|y\right|,\frac{1}{2}\leq\left|y+\hat{x}\right|}\int_{0}^{\infty}\left|p^{\nu}\left(t,x+z\right)-p^{\nu}\left(t,x\right)\right|dt\frac{dy}{\gamma\left(\left|x\right|\left|y\right|\right)\left|y\right|^{d}}\\
 & =\mathcal{I}_{2,1}+\mathcal{I}_{2,2}.
\end{align*}

By Corollary \ref{cor:diff_D} and Lemma \ref{lem:alpha12_ratio}, 

\noindent 
\begin{align*}
 & \mathcal{I}_{2,2}\\
 & \preceq\left|z\right|\int_{0}^{1}\int_{\frac{1}{r}\leq\left|y\right|,\frac{1}{2}\leq\left|y+\hat{x}\right|}\frac{\gamma\left(\left|x+lz\right|\right)}{\left|x+lz\right|^{d+1}}\frac{dy}{\gamma\left(\left|x\right|\left|y\right|\right)\left|y\right|^{d}}dl\\
 & \preceq\frac{\left|z\right|}{\left|x\right|^{d+1}}\int_{0}^{1}\int_{\frac{1}{r}\leq\left|y\right|,\frac{1}{2}\leq\left|y+\hat{x}\right|}\frac{1}{\left|\hat{x}+lz/\left|x\right|\right|^{d+1}}\left[\left(\frac{\hat{x}+lz/\left|x\right|}{\left|y\right|}\right)^{\alpha_{2}}+\left(\frac{\hat{x}+lz/\left|x\right|}{\left|y\right|}\right)^{\alpha_{1}}\right]\frac{dy}{\left|y\right|^{d}}dl\\
 & \preceq\frac{\left|z\right|^{1-\alpha_{1}}}{\left|x\right|^{d+1-\alpha_{1}}}.
\end{align*}

The estimate for $\mathcal{I}_{2,1}$ is derived by essentially by
the same argument replacing $\hat{x}+lz/\left|x\right|$ with $\hat{x}+y+lz/\left|x\right|$.

\medskip

\textbf{Estimate of $\mathcal{I}_{3}:$} if $\left|y+\hat{x}\right|\leq\frac{2}{r}$
then $\left|y\right|\geq\frac{1}{2}$ and $\left|\hat{x}+z/\left|x\right|+y\right|\leq\frac{3}{r}.$

\noindent 
\begin{align*}
\mathcal{I}_{3} & \preceq\int_{\left|y+\hat{x}\right|\leq\frac{2}{r}}\int_{0}^{\infty}\left|p^{\nu}\left(t,x+z+\left|x\right|y\right)\right|dt\frac{dy}{\gamma\left(\left|x\right|\left|y\right|\right)\left|y\right|^{d}}\\
 & +\int_{\left|y+\hat{x}\right|\leq\frac{2}{r}}\int_{0}^{\infty}\left|p^{\nu}\left(t,x+z\right)\right|dt\frac{dy}{\gamma\left(\left|x\right|\left|y\right|\right)\left|y\right|^{d}}\\
 & +\int_{\left|y+\hat{x}\right|\leq\frac{2}{r}}\int_{0}^{\infty}\left|p^{\nu}\left(t,x+\left|x\right|y\right)\right|dt\frac{dy}{\gamma\left(\left|x\right|\left|y\right|\right)\left|y\right|^{d}}\\
 & +\int_{\left|y+\hat{x}\right|\leq\frac{2}{r}}\int_{0}^{\infty}\left|p^{\nu}\left(t,x\right)\right|dt\frac{dy}{\gamma\left(\left|x\right|\left|y\right|\right)\left|y\right|^{d}}\\
 & =\mathcal{I}_{3,1}+\mathcal{I}_{3,2}+\mathcal{I}_{3,3}+\mathcal{I}_{3,4}.
\end{align*}

By Corollary \ref{cor:diff_D}, Lemma \ref{lem:alpha12_ratio}, and
changing the variable of integration,
\begin{align*}
\mathcal{I}_{3,3} & \preceq\int_{\left|y+\hat{x}\right|\leq\frac{2}{r},\left|y\right|\geq\frac{1}{2}}\frac{\gamma\left(x+\left|x\right|y\right)}{\left|x+\left|x\right|y\right|^{d}}\frac{dy}{\gamma\left(\left|x\right|\left|y\right|\right)\left|y\right|^{d}}\\
 & \preceq\frac{1}{\left|x\right|^{d}}\int_{\left|y+\hat{x}\right|\leq\frac{2}{r},\left|y\right|\geq\frac{1}{2}}\frac{1}{\left|\hat{x}+y\right|^{d}}\left[\left(\frac{\left|\hat{x}+y\right|}{\left|y\right|}\right)^{\alpha_{2}}+\left(\frac{\left|\hat{x}+y\right|}{\left|y\right|}\right)^{\alpha_{1}}\right]\frac{dy}{\left|y\right|^{d}}\\
 & \preceq\frac{1}{\left|x\right|^{d}}\int_{\left|y\right|\leq\frac{2}{r}}\left(\frac{1}{\left|y\right|^{d-\alpha_{2}}}+\frac{1}{\left|y\right|^{d-\alpha_{1}}}\right)dy\preceq\frac{\left|z\right|^{\alpha_{2}}}{\left|x\right|^{d+\alpha_{2}}}.
\end{align*}

The estimate for $\mathcal{I}_{3,1}$ is derived by the same argument
for $\mathcal{I}_{3,3}$ where we replace $\left|\hat{x}+y\right|$
with $\left|\hat{x}+z/\left|x\right|+y\right|$ and $\frac{2}{r}$
with $\frac{3}{r}$ respectively. 

By Corollary \ref{cor:diff_D} and Lemma \ref{lem:alpha12_ratio},
\begin{align*}
\mathcal{I}_{3,4} & \preceq\frac{\gamma\left(\left|x\right|\right)}{\left|x\right|^{d}}\int_{\left|y+\hat{x}\right|\leq\frac{2}{r},\left|y\right|\geq\frac{1}{2}}\frac{dy}{\gamma\left(\left|x\right|\left|y\right|\right)\left|y\right|^{d}}\\
 & \preceq\frac{1}{\left|x\right|^{d}}\int_{\left|y+\hat{x}\right|\leq\frac{2}{r},\left|y\right|\geq\frac{1}{2}}\left(\frac{1}{\left|y\right|^{d+\alpha_{2}}}+\frac{1}{\left|y\right|^{d+\alpha_{1}}}\right)dy\\
 & \preceq\frac{1}{\left|x\right|^{d}}\int_{\left|y+\hat{x}\right|\leq\frac{2}{r}}dy\preceq\frac{\left|z\right|^{d}}{\left|x\right|^{2d}}.
\end{align*}

The estimate for $\mathcal{I}_{3,2}$ is derived by the same argument
for $\mathcal{I}_{3,4}$ where we replace $\left|\hat{x}+y\right|$
with $\left|\hat{x}+z/\left|x\right|+y\right|$ and $\frac{2}{r}$
with $\frac{3}{r}$ respectively. 

\medskip

\textbf{Estimate of $\mathcal{I}_{4}$}: if $\frac{2}{r}\leq\left|y+\hat{x}\right|\leq\frac{1}{2}$
then $\left|y\right|\geq\frac{1}{2}$ and for all $l\in\left[0,1\right]$
\[
\frac{1}{r}\leq\left|\hat{x}+lz/\left|x\right|+y\right|\leq1,\left|\hat{x}+lz/\left|x\right|\right|\geq\frac{1}{2}.
\]

\noindent 
\begin{align*}
\mathcal{I}_{4} & \preceq\int_{\frac{2}{r}\leq\left|y+\hat{x}\right|\leq\frac{1}{2}}\int_{0}^{\infty}\left|p^{\nu}\left(t,x+z\right)-p^{\nu}\left(t,x\right)\right|dt\frac{dy}{\gamma\left(\left|x\right|\left|y\right|\right)\left|y\right|^{d}}\\
 & +\int_{\frac{2}{r}\leq\left|y+\hat{x}\right|\leq\frac{1}{2}}\int_{0}^{\infty}\left|p^{\nu}\left(t,x+z+\left|x\right|y\right)-p^{\nu}\left(t,x+\left|x\right|y\right)\right|dt\frac{dy}{\gamma\left(\left|x\right|\left|y\right|\right)\left|y\right|^{d}}\\
 & =\mathcal{I}_{4,1}+\mathcal{I}_{4,2}.
\end{align*}

By Corollary \ref{cor:diff_D}, Lemma \ref{lem:alpha12_ratio}, and
changing the variable of integration, 

\noindent 
\begin{align*}
\mathcal{I}_{4,2} & \preceq\int_{0}^{1}\int_{\frac{1}{r}\leq\left|\hat{x}+lz/\left|x\right|+y\right|\leq1,\left|y\right|\geq\frac{1}{2}}\left|z\right|\frac{\gamma\left(\left|x+\left|x\right|y+lz\right|\right)}{\left|x+\left|x\right|y+lz\right|^{d+1}}\frac{dy}{\gamma\left(\left|x\right|\left|y\right|\right)\left|y\right|^{d}}dl\\
 & \preceq\int_{0}^{1}\int_{\frac{1}{r}\leq\left|\hat{x}+lz/\left|x\right|+y\right|\leq1,\left|y\right|\geq\frac{1}{2}}\frac{\left|z\right|}{\left|x\right|^{d+1}}\frac{1}{\left|\hat{x}+y+lz/\left|x\right|\right|^{d+1}}\\
 & \times\left[\left(\frac{\left|\hat{x}+y+lz/\left|x\right|\right|}{\left|y\right|}\right)^{\alpha_{1}}+\left(\frac{\left|\hat{x}+y+lz/\left|x\right|\right|}{\left|y\right|}\right)^{\alpha_{2}}\right]\frac{dy}{\left|y\right|^{d}}dl\\
 & \preceq\int_{0}^{1}\int_{\frac{1}{r}\leq\left|y\right|\leq1}\frac{\left|z\right|}{\left|x\right|^{d+1}}\left[\frac{1}{\left|y\right|^{d+1-\alpha_{1}}}+\frac{1}{\left|y\right|^{d+1-\alpha_{2}}}\right]dydl\\
 & \preceq\frac{\left|z\right|^{\alpha_{2}}}{\left|x\right|^{d+\alpha_{2}}}.
\end{align*}

The estimate for $\mathcal{I}_{4,1}$ is derived by essentially the
same argument for $\mathcal{I}_{4,2}$. The proof is complete by specifying
$\eta=\min\left\{ \alpha_{2},1-\alpha_{1}\right\} $ in (\ref{eq:Cal-Zyg}). 
\end{proof}

\section{Auxiliary Results}

\noindent Next we prove some essential embedding inequalities. 
\begin{lem}
\noindent \label{lem:(Interpolation-Inequalities)} Let $\nu\in\mathfrak{A}^{\sigma},w=w_{\nu}$
is a continuous O-RV function and \textbf{A, B} hold. 

\noindent (i) For any $\epsilon>0,\delta\in\left(0,1\right)$, there
exists $C=C\left(\epsilon,\delta,\nu\right)$ such that

\noindent 
\[
\left|L^{\nu;\delta}f\right|_{L_{p}}\leq\epsilon\left|L^{\nu}f\right|_{L_{p}}+C\left|f\right|_{L_{p}},f\in\mathcal{S}\left(\mathbf{R}^{d}\right).
\]

\noindent (ii) If $\sigma>1$, then for any $\epsilon>0$, there exists
$C=C\left(\epsilon,d,p,\nu\right)$ such that
\[
\left|\nabla f\right|_{L_{p}}\leq\epsilon\left|f\right|_{H_{p}^{\nu}}+C\left|f\right|_{L_{p}},f\in\mathcal{S}\left(\mathbf{R}^{d}\right).
\]
\end{lem}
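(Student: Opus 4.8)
The plan is to base both inequalities on the convolution semigroup $P_tf=p^{\nu_{sym}}(t)\ast f$ generated by $L^{\nu_{sym}}=L^{\nu;1}$. By Lemma~\ref{lem:timeDensity}(i) each $p^{\nu_{sym}}(t)$ is a probability density, so $\{P_t\}_{t\ge 0}$ is a contraction semigroup on $L_p$, and for $f\in\mathcal S(\mathbf R^d)$ one has $P_\tau f-f=\int_0^\tau P_tL^{\nu_{sym}}f\,dt$, whence $|P_tf-f|_{L_p}\le\min\{t\,|L^{\nu_{sym}}f|_{L_p},\,2|f|_{L_p}\}$. I will also use $|L^{\nu_{sym}}f|_{L_p}\le N|L^{\nu}f|_{L_p}\le N|f|_{H_p^{\nu}}$ from Remark~\ref{rem:norm_equi}.

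For part (i) I would first record the Balakrishnan (subordination) identity
\[
L^{\nu;\delta}f=\frac{\delta}{\Gamma(1-\delta)}\int_0^{\infty}t^{-1-\delta}\bigl(P_tf-f\bigr)\,dt,\qquad f\in\mathcal S(\mathbf R^d),
\]
verified by comparing Fourier symbols (both sides equal $-(-\psi^{\nu_{sym}}(\xi))^{\delta}\hat f(\xi)$, using $\int_0^{\infty}u^{-1-\delta}(e^{-u}-1)\,du=-\Gamma(1-\delta)/\delta$); the $L_p$-valued integral converges absolutely because $\delta\in(0,1)$ and $|P_tf-f|_{L_p}=O(t)$ near $0$, $=O(1)$ near $\infty$. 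Splitting the integral at a threshold $\tau>0$, bounding the piece over $(0,\tau)$ by $t\,|L^{\nu_{sym}}f|_{L_p}$ and the piece over $(\tau,\infty)$ by $2|f|_{L_p}$, gives
\[
|L^{\nu;\delta}f|_{L_p}\le\frac{\delta}{\Gamma(1-\delta)}\Bigl(\tfrac{\tau^{1-\delta}}{1-\delta}\,|L^{\nu_{sym}}f|_{L_p}+\tfrac{2\tau^{-\delta}}{\delta}\,|f|_{L_p}\Bigr),
\]
and picking $\tau$ small (depending on $\epsilon,\delta,\nu$) together with Remark~\ref{rem:norm_equi} finishes (i).

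For part (ii), $\sigma>1$ forces $p_1>1$ by Assumption~\textbf{A}. The plan is to split, for $f\in\mathcal S(\mathbf R^d)$ and $\tau>0$,
\[
\nabla f=\nabla P_\tau f+\nabla(f-P_\tau f)=\nabla P_\tau f-\int_0^{\tau}\bigl(\nabla p^{\nu_{sym}}(t)\bigr)\ast L^{\nu_{sym}}f\,dt,
\]
so that Young's and Minkowski's inequalities yield $|\nabla f|_{L_p}\le|\nabla p^{\nu_{sym}}(\tau)|_{L_1}\,|f|_{L_p}+\bigl(\int_0^{\tau}|\nabla p^{\nu_{sym}}(t)|_{L_1}\,dt\bigr)|L^{\nu_{sym}}f|_{L_p}$. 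The main ingredient is the bound $|\nabla p^{\nu_{sym}}(t)|_{L_1}\le Na(t)^{-1}$, $t>0$: by (\ref{eq:scale_p0}) with $m=1$, $\nabla p^{\nu_{sym}}(t,x)=a(t)^{-d-1}(\nabla\bar p)(x/a(t))$ with $\bar p$ the time-one density of the L\'evy process attached to the rescaled measure $w_\nu(a(t))\,\nu_{sym}(a(t)\,dy)$, and $|\nabla\bar p|_{L_1}\le N$ with $N$ independent of the scale by the density estimates behind Lemma~\ref{lem:timeDensity} (cf.\ \cite[Lemmas 6--8]{MPh3}). Since $p_1>1$, the asymptotics of $a=a_{w}$ (Corollary~\ref{cor:aymp_a}) give $\int_0^{\tau}|\nabla p^{\nu_{sym}}(t)|_{L_1}\,dt\le N\int_0^{\tau}a(t)^{-1}\,dt\le N'\tau\,a(\tau)^{-1}\to0$ as $\tau\to0$; choosing $\tau$ with $N\int_0^{\tau}a(t)^{-1}\,dt<\epsilon$, setting $C=|\nabla p^{\nu_{sym}}(\tau)|_{L_1}$, and invoking Remark~\ref{rem:norm_equi} completes (ii). (Alternatively, (ii) can be routed through the bounded, decaying multiplier $2\pi i\xi_j(1-\psi^{\nu_{sym}}(\xi))^{-\theta}$ with $\theta\in(1/p_1,1)$ and part~(i), at the cost of a Mikhlin-type theorem for O-RV symbols.)

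The $\epsilon$-optimisation and the semigroup manipulations are routine. The real obstacle is in (ii): establishing the $L_1$-bound $|\nabla p^{\nu_{sym}}(t)|_{L_1}\lesssim a(t)^{-1}$ with a rate that remains integrable as $t\to0$ — this is precisely where $\sigma>1$ (hence $p_1>1$) is indispensable — and, should this not be available in the exact form needed, extracting it from the scaling of $p^{\nu_{sym}}$ together with the estimates underlying Lemma~\ref{lem:timeDensity}.
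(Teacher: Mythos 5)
Your part (i) is essentially the paper's argument in slightly different dress: both rest on the Balakrishnan identity $L^{\nu;\delta}f=c\int_0^\infty t^{-1-\delta}\bigl(p^{\nu_{sym}}(t)\ast f-f\bigr)\,dt$ (the paper phrases the integrand probabilistically as $\mathbf{E}\left[f(x+Z_t^{\nu_{sym}})-f(x)\right]$, which is the same object), split the $t$-integral at a small threshold, bound the short-time piece by $t\,\left|L^{\nu_{sym}}f\right|_{L_p}$ via the generator representation and the long-time piece by $2\left|f\right|_{L_p}$, and then replace $L^{\nu_{sym}}$ by $L^{\nu}$ through Lemma \ref{lem:(Continuity-of-Operators)} (your Remark \ref{rem:norm_equi}). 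No gap there.

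For part (ii) you take a genuinely different route. The paper never touches the semigroup: it invokes the classical interpolation inequality
$\left|\nabla f\right|_{L_p}\le\kappa\left|f\right|_{H_p^\alpha}+C\kappa^{-1/(\alpha-1)}\left|f\right|_{L_p}$
for a fixed $1<\alpha<p_1\wedge p_2$, and then bounds $\left|f\right|_{H_p^\alpha}$ by $N\left[2^{\alpha n}\left|f\right|_{L_p}+2^{(\alpha-\gamma)n}\left|f\right|_{H_p^\nu}\right]$ using the Littlewood--Paley characterization of $H_p^\nu$ from \cite[Proposition 1]{MPh3} together with Lemma \ref{lem:alpha12_ratio}, letting $n\to\infty$. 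Your version bypasses the classical Bessel scale and the $\varphi_j$-system entirely: you write $\nabla f=\nabla P_\tau f-\int_0^\tau\bigl(\nabla p^{\nu_{sym}}(t)\bigr)\ast L^{\nu_{sym}}f\,dt$ and apply Young's inequality, reducing the claim to a uniform $L_1$-bound on $\nabla p^{\nu_{sym}}(t)$ plus the integrability $\int_0^\tau a(t)^{-1}\,dt\preceq\tau\,a(\tau)^{-1}\to0$, which Corollary \ref{cor:aymp_a} supplies precisely because $\sigma>1$ forces $p_1,p_2>1$ under Assumption \textbf{A}. This is more elementary and more self-contained in spirit, at the cost of needing finer kernel information.

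The one soft spot --- which you flag yourself --- is the input $\left|\nabla p^{\nu_{sym}}(t)\right|_{L_1}\le N\,a(t)^{-1}$. The scaling identity (\ref{eq:scale_p0}) reduces it to a uniform bound $\int\left|\nabla\bar p^{\,R}\right|dx\le N$ over the rescaled family, but Lemma \ref{lem:timeDensity}(ii) as stated only controls $D^\eta L^{\tilde\pi_R}\bar p$, i.e.\ always with an extra nonlocal factor in front, and never the pure derivative $D^\eta\bar p$. The pure-gradient bound is certainly the intent of \cite[Lemmas 6--8]{MPh3} (the Assumption \textbf{B} lower bound on the symbol gives exponential decay of $\widehat{\bar p^{\,R}}$ uniformly in $R$, whence bounds on all moments of all derivatives), but within the paper it is not isolated as a quotable statement, so to make your proof stand you would have to either cite that estimate from \cite{MPh3} explicitly or re-derive it from the symbol estimates. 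Once granted, your argument for (ii) is correct.
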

\begin{proof}
\noindent (i) For $\delta\in\left(0,1\right)$ and $f\in\mathcal{S}\left(\mathbf{R}^{d}\right)$
we have 
\begin{align*}
-\left(-\psi^{\nu_{sym}}\left(\xi\right)\right)^{\delta}\hat{f}\left(\xi\right)= & c\int_{0}^{\infty}t^{-\delta}\left[\exp\left(\psi^{\nu_{sym}}\left(\xi\right)t\right)-1\right]\frac{dt}{t}\hat{f}\left(\xi\right),\xi\in\mathbf{R}^{d}
\end{align*}

\noindent and hence,
\begin{align*}
L^{\nu;\delta}f\left(x\right) & =\mathcal{F}^{-1}\left[-\left(-\psi^{\nu_{sym}}\right)^{\delta}\hat{f}\right]\left(x\right)\\
 & =c\mathbf{E}\int_{0}^{\infty}t^{-\delta}\left[f\left(x+Z_{t}^{\nu_{sym}}\right)-f\left(x\right)\right]\frac{dt}{t},x\in\mathbf{R}^{d}.
\end{align*}

By It\^{o} formula, for any $a>0$, 
\begin{align*}
L^{\nu;\delta}f\left(x\right) & =c\mathbf{E}\int_{0}^{a}t^{-\delta}\int_{0}^{t}L^{\nu_{sym}}f\left(x+Z_{r}^{\nu_{sym}}\right)dr\frac{dt}{t}\\
 & +c\mathbf{E}\int_{a}^{\infty}t^{-\delta}\left[f\left(x+Z_{t}^{\nu_{sym}}\right)-f\left(x\right)\right]\frac{dt}{t}.
\end{align*}

\noindent Therefore, for any $\epsilon_{1}>0$, we may choose $a$
sufficiently small such that for some $C\left(\epsilon_{1},\delta\right)>0$,
\[
\left|L^{\nu;\delta}f\right|_{L_{p}}\leq\epsilon_{1}\left|L^{\nu_{sym}}f\right|_{L_{p}}+C\left(\epsilon_{1},\delta\right)\left|f\right|_{L_{p}},f\in\mathcal{S}\left(\mathbf{R}^{d}\right).
\]

The statement follows from Lemma \ref{lem:(Continuity-of-Operators)}
with $\nu^{sym}\prec\nu$, choosing $\epsilon_{1}$ smaller if necessary. 

\noindent (ii) By an interpolation inequality for standard fractional
Laplacian for $1<\alpha<p_{1}\wedge p_{2}$ and each $\kappa\in\left(0,1\right)$
there is $C=C\left(\alpha,p,d\right)$ such that
\begin{equation}
\left|\nabla f\right|_{L_{p}}\leq\kappa\left|f\right|_{H_{p}^{\alpha}}+C\kappa^{-\frac{1}{\alpha-1}}\left|f\right|_{L_{p}}.\label{eq:int0}
\end{equation}

We estimate the first term further using a convenient equivalent norm
provided in \cite[Proposition 1]{MPh3} of Bessel potential spaces
with generalized smoothness. Let $\varphi_{j},j\geq0$ be a system
of functions as in defined in \cite[Remark 1]{MPh3} based on a partition
of unity and let $\gamma$ be such that $\alpha<\gamma<p_{1}\wedge p_{2}$.
Owing to Lemma \ref{lem:alpha12_ratio} we have, 
\begin{align}
\left|f\right|_{H_{p}^{\alpha}} & \leq N\left|\left(\sum_{j=0}^{\infty}\left|2^{\alpha j}\varphi_{j}\ast f\right|^{2}\right)^{1/2}\right|_{L_{p}}\label{eq:int1}\\
\leq & N\left[\left|\left(\sum_{j=0}^{n}\left|2^{\alpha j}\varphi_{j}\ast f\right|^{2}\right)^{1/2}\right|_{L_{p}}+\left|\left(\sum_{j=n}^{\infty}\left|2^{\left(\alpha-\gamma\right)j}w\left(2^{-j}\right)^{-1}\varphi_{j}\ast f\right|^{2}\right)^{1/2}\right|\right]\nonumber \\
\leq & N\left[2^{\alpha n}\left|f\right|_{L_{p}}+2^{\left(\alpha-\gamma\right)n}\left|f\right|_{H_{p}^{\nu}}\right].\nonumber 
\end{align}

\noindent The claim follows by combining (\ref{eq:int0}), (\ref{eq:int1})
and choosing $n$ sufficiently large. 
\end{proof}
The next Lemma is a generalization of \cite[Lemma 3]{MPr14}.
\begin{lem}
\noindent \label{lem:product0} Let $\nu\in\mathfrak{A}^{\sigma},w=w_{\nu}$
is a continuous O-RV function and \textbf{A, B} hold. Let $g,u\in\mathcal{S}\left(\mathbf{R}^{d}\right)$
and

\noindent 
\begin{align*}
J\left(x,l\right) & =\int\left[g\left(x+y\right)-g\left(x\right)\right]\left[u\left(x+y-l\right)-u\left(x-l\right)\right]m\left(x,y\right)\nu\left(dy\right),x,l\in\mathbf{R}^{d}\\
 & =\int_{\left|y\right|>1}...\nu\left(dy\right)+\int_{\left|y\right|\leq1}...\nu\left(dy\right)\\
 & =J_{1}\left(x,l\right)+J_{2}\left(x,l\right)
\end{align*}
\end{lem}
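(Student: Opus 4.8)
The strategy is to estimate $J_{1}$ and $J_{2}$ by different means. On $\{|y|>1\}$ the measure $\nu$ is finite with all the moments furnished by Lemma~\ref{lem:alpha12_integral}, so only boundedness of $g$, $u$ and $m$ is needed: I would bound $|g(x+y)-g(x)|\le 2|g|_{L_{\infty}}$ (or keep it as $|y|\,|\nabla g|_{L_{\infty}}$ and use the $\alpha_{2}$-moment of $\tilde\nu_{R}$ when $\sigma\in(1,2)$), retain the $u$-increment, use $m\le K$, and apply Minkowski's integral inequality in $x$; since $\int_{|y|>1}\nu(dy)=\delta_{\nu}(1)<\infty$, this yields $|J_{1}(\cdot,l)|_{L_{p}}\le N|g|_{L_{\infty}}\,|u|_{L_{p}}$ with $N$ independent of the shift $l$.

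For $J_{2}$ I would expand the $g$-increment by the fundamental theorem of calculus, $g(x+y)-g(x)=\int_{0}^{1}y\cdot\nabla g(x+sy)\,ds$, so that $|g(x+y)-g(x)|\preceq|y|\,|\nabla g|_{L_{\infty}}$ on $\{|y|\le1\}$ (when $\sigma=1$ one first uses the centering of $y\,m(x,y)\nu(dy)$ to pass to the second-order remainder so that the integral converges), and then split two cases for the $u$-increment. If $\sigma>1$, the full gradient of $u$ is controlled through Lemma~\ref{lem:(Interpolation-Inequalities)}(ii), so $|u(x+y-l)-u(x-l)|\preceq|y|\,|\nabla u(x-l+\cdot\,y)|$; as $\int_{|y|\le1}|y|^{2}\nu(dy)<\infty$, Minkowski's inequality gives $|J_{2}(\cdot,l)|_{L_{p}}\preceq|\nabla g|_{L_{\infty}}\,|\nabla u|_{L_{p}}\preceq|\nabla g|_{L_{\infty}}\bigl(\epsilon|u|_{H_{p}^{\nu}}+C(\epsilon)|u|_{L_{p}}\bigr)$. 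If $\sigma\le1$ the gradient of $u$ is not available, and instead I would invoke the difference representation of Lemma~\ref{lem:diff_frac} with a small admissible $\delta>0$, writing $u(x+y-l)-u(x-l)=c\int k^{\nu;\delta}(z,y)\,L_{\delta}^{\nu}u(x-l-z)\,dz$ with $\int|k^{\nu;\delta}(z,y)|\,dz\le w(|y|)^{\delta}$; two applications of Minkowski's inequality in $x$ then give
\[
|J_{2}(\cdot,l)|_{L_{p}}\le cK\,|\nabla g|_{L_{\infty}}\Bigl(\int_{|y|\le1}|y|\,w(|y|)^{\delta}\nu(dy)\Bigr)|L_{\delta}^{\nu}u|_{L_{p}},
\]
where the weighted moment is finite because $w$ is O-RV (Potter-type bounds make $|y|\,w(|y|)^{\delta}$ decay near $0$ faster than $|y|^{\sigma}$ when $\sigma\le1$ and $\delta$ is small), and finally Lemma~\ref{lem:(Interpolation-Inequalities)}(i) replaces $|L_{\delta}^{\nu}u|_{L_{p}}=|L^{\nu;\delta}u|_{L_{p}}$ by $\epsilon|L^{\nu}u|_{L_{p}}+C(\epsilon)|u|_{L_{p}}$. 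In either case $J_{2}$ is bounded by an arbitrarily small multiple of $|u|_{H_{p}^{\nu}}$ plus a constant times $|u|_{L_{p}}$, uniformly in $l$; the roles of $g$ and $u$ and the way the differencing is split between them can be rearranged to match whichever norms the conclusion is phrased in.

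The main obstacle is the exponent bookkeeping on $\{|y|\le1\}$ in the fractional regime $\sigma\le1$: one needs a single $\delta$ that is admissible for Lemma~\ref{lem:diff_frac} and at the same time makes $\int_{|y|\le1}|y|\,w(|y|)^{\delta}\nu(dy)$ converge, and reconciling these — together with the $\sigma=1$ centering that is required even to give the principal-value integrals a meaning, and the passage to the second-order Taylor remainder whenever one wants the extra decay $|y|^{2}$ — is where the care lies. Everything else is a routine combination of Minkowski's inequality, the moment bounds for $\tilde\nu_{R}$, and the two interpolation inequalities of Lemma~\ref{lem:(Interpolation-Inequalities)}, with all constants kept independent of $l$ (and, in the eventual freezing-coefficient application, of the base point).
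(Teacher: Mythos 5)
Your bounds are stated uniformly in $l$ (e.g.\ $\left|J_{1}(\cdot,l)\right|_{L_{p}}\leq N\left|g\right|_{L_{\infty}}\left|u\right|_{L_{p}}$, and you close by saying "all constants kept independent of $l$"), but the conclusion of the lemma is the double integral $\int\int\left|J(x,l)\right|^{p}\,dx\,dl$: a bound that does not decay in $l$ integrates to $+\infty$ over $l\in\mathbf{R}^{d}$. This is not a cosmetic slip. Passing to $\left|g\right|_{L_{\infty}}$ or $\left|\nabla g\right|_{L_{\infty}}$ pointwise destroys exactly the factorization that makes the $l$-integral finite. The paper's mechanism, which your proposal misses, is to apply H\"older/Jensen to the $\nu(dy)$-integral (which has finite mass $\int_{\left|y\right|>1}\nu(dy)$ resp.\ $\int_{\left|y\right|\leq1}\left|y\right|\nu(dy)$), bring the $p$-th power inside, and then Fubini: for fixed $y$ the $l$-integral of $\left|u(x+y-l)-u(x-l)\right|^{p}$ (resp.\ $\left|\nabla u(x+sy-l)\right|^{p}$) is a constant in $x$ equal to at most $2^{p}\left|u\right|_{L_{p}}^{p}$ (resp.\ $\left|\nabla u\right|_{L_{p}}^{p}$), after which the $x$-integral yields $\left|g\right|_{L_{p}}^{p}$ (or, via Lemma~\ref{lem:diff_frac} and Minkowski, $\left|L^{\nu;\beta}g\left(\cdot-z\right)\right|_{L_{p}}^{p}=\left|L^{\nu;\beta}g\right|_{L_{p}}^{p}$). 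Each variable absorbs one factor; your $L_{\infty}$ bound on $g$ leaves nothing for the $x$-integral to absorb and hence nothing to pay for $dl$.

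Two secondary issues. First, the difference representation of Lemma~\ref{lem:diff_frac} must be applied to the $g$-increment, not to $u$: the conclusion is phrased as $\left|g\right|_{H_{p}^{\nu;\beta}}^{p}\left|u\right|_{H_{p}^{1}}^{p}$, so the fractional smoothness is charged to $g$ and the full gradient to $u$ (the paper always Taylor-expands the $u$-increment, both for $\sigma\in(0,1)$ and $\sigma\in[1,2)$). Your proposal Taylor-expands $g$ and puts the fractional representation on $u$, which would produce $\left|\nabla g\right|$ and $\left|L^{\nu;\delta}u\right|_{L_{p}}$ and therefore cannot land on the stated norms. Second, the $\sigma=1$ centering worry is unfounded: $J$ is a product of two increments, each $O(\left|y\right|)$ near $y=0$, so the integrand is $O(\left|y\right|^{2})$ and the integral over $\left\{ \left|y\right|\leq1\right\}$ is absolutely convergent without any compensation; there is no principal-value structure here to rescue. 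The case split in the paper is $\sigma\in(0,1)$ (where $\int_{\left|y\right|\leq1}\left|y\right|\nu(dy)<\infty$ and the elementary bound suffices, giving $\beta$ arbitrary) versus $\sigma\in[1,2)$ (where one picks $\beta$ with $\frac{\sigma-1}{p_{1}\wedge p_{2}}<\beta<1$ so that Lemma~\ref{lem:gen-by-part} makes $\int_{\left|y\right|\leq1}\left|y\right|w(\left|y\right|)^{\beta}\nu(dy)$ finite), not the $\sigma\lessgtr1$ split you describe.
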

\noindent where $\left|m\left(x,y\right)\right|\leq K,x,y\in\mathbf{R}^{d}$.
Then there exists $0<\beta<1$ and $N=N\left(\beta,d,p,\nu\right)>0$
such that
\[
\int\int\left|J\left(x,l\right)\right|^{p}dxdl\leq NK^{p}\left(\left|g\right|_{L_{p}}^{p}+\left|L^{\nu;\beta}g\right|_{L_{p}}^{p}\right)\left|u\right|_{H_{p}^{1}}^{p}.
\]

\noindent Moreover, 
\[
\int\int\left|J\left(x,l\right)\right|^{p}dxdl\leq NK^{p}\left|g\right|_{H_{p}^{\nu;\beta}}^{p}\left|u\right|_{H_{p}^{1}}^{p}.
\]

\begin{proof}
\noindent We prove the first estimate starting with $J_{1}$. By Lemma
\ref{lem:alpha12_integral}, $\int_{\left|y\right|>1}\nu\left(dy\right)$
is finite and by Hölder inequality, 
\begin{align*}
 & \int\int\left|J_{1}\left(x,l\right)\right|^{p}dxdl\\
 & \leq NK^{p}\int\int\int_{\left|y\right|>1}\left|g\left(x+y\right)-g\left(x\right)\right|^{p}\left|u\left(x+y-l\right)-u\left(x-l\right)\right|^{p}\nu\left(dy\right)dxdl\\
 & \leq NK^{p}\left|g\right|_{L_{p}}^{p}\left|u\right|_{L_{p}}^{p}.
\end{align*}

If $\sigma\in\left(0,1\right)$, by Lemma \ref{lem:alpha12_integral},
$\int_{\left|y\right|\leq1}\left|y\right|\nu\left(dy\right)$ is finite
and by Hölder inequality,
\begin{align*}
 & \int\int\left|J_{2}\left(x,l\right)\right|^{p}dxdl\\
\leq & NK^{p}\int\int\int_{0}^{1}\int_{\left|y\right|\leq1}\left|g\left(x+y\right)-g\left(x\right)\right|^{p}\left|\nabla u\left(x+sy-l\right)\right|^{p}\left|y\right|\nu\left(dy\right)dsdxdl\\
\leq & NK^{p}\left|g\right|_{L_{p}}^{p}\left|\nabla u\right|_{L_{p}}^{p}.
\end{align*}

If $\sigma\in\left[1,2\right)$, we choose $\beta$ such that $\frac{\sigma-1}{\left(p_{1}\wedge p_{2}\right)}<\beta<1$.
Then by Lemma \ref{lem:diff_frac}, Minkowski inequality and Lemma
\ref{lem:gen-by-part},

\begin{align*}
 & \int\int\left|J_{2}\left(x,l\right)\right|^{p}dxdl\\
 & \leq NK^{p}(\int_{\left|y\right|\leq1}\int_{0}^{1}\int\left(\int\int\left|L^{\nu;\beta}g\left(x-z\right)\right|^{p}\left|\nabla u\left(x+sy-l\right)\right|^{p}dxdl\right)^{1/p}\\
 & \times\left|\kappa^{\nu;\beta}\left(z,y\right)\right|\left|y\right|dzds\nu\left(dy\right)){}^{p}\\
 & \leq NK^{p}\left|L^{\nu;\beta}g\right|_{L_{p}}^{p}\left|\nabla u\right|_{L_{p}}^{p}\left(\int_{\left|y\right|\leq1}\left|y\right|w\left(\left|y\right|\right)^{\beta}\nu\left(dy\right)\right)^{p}\\
 & \leq NK^{p}\left|L^{\nu;\beta}g\right|_{L_{p}}^{p}\left|\nabla u\right|_{L_{p}}^{p}.
\end{align*}

The second estimate follows from the first estimate and Remark \ref{rem:gen_bessel}.
\end{proof}
\noindent 
For the reader's convenience, we present some results from \cite{MPr14}
whose proof clearly carry over to our settings. The proof is an application
of Sobolev embedding theorem and Remark \ref{rem:continuity}.
\begin{lem}
\noindent \label{lem:emb2} (\cite[Lemma 6]{MPr14}) Let $\nu\in\mathfrak{A}^{\sigma},w=w_{\nu}$
be a continuous O-RV function and \textbf{A, B} hold. Let $\beta\in\left(0,1\right),p>d/\beta,\phi,f\in S\left(\mathbf{R}^{d}\right).$ 

\noindent (i) If $\left|m\left(z,y\right)\right|+\sup_{z\in\mathbf{R}^{d}}\left|\partial_{z}^{\beta}m\left(z,y\right)\right|\leq K,y\in\mathbf{R}^{d},$
then there exists $N=N\left(\beta,d,p,\nu\right)>0$ independent of
$\phi,f$ such that 
\[
\left|\phi L^{m,\nu}f\right|_{L_{p}}^{p}\leq NK^{p}\left|f\right|_{H_{p}^{\nu}}^{p}\left[\left|\phi\right|_{L_{p}}^{p}+\left|\nabla\phi\right|_{L_{p}}^{p}\right].
\]

\noindent (ii) If $m\left(z,y\right)$ is bounded and for a continuous
increasing function $\kappa\left(r\right),r>0,$
\[
\left|m\left(z,y\right)-m\left(z^{\prime},y\right)\right|\leq\kappa\left(\left|z-z^{\prime}\right|\right),\hspace{1em}z,z^{\prime},y\in\mathbf{R}^{d},
\]

\noindent with
\[
\int_{\left|y\right|\leq1}\kappa\left(\left|y\right|\right)\frac{dy}{\left|y\right|^{d+\beta}}<\infty,
\]

\noindent then there exists a constant $N=N\left(\beta,d,p,\nu\right)>0$
such that for $\epsilon\in\left(0,1\right],$ 
\[
\left|\phi L^{m,\nu}f\right|_{L_{p}}^{p}\leq NK\left(\epsilon,\phi\right)\left|f\right|_{H_{p}^{\nu}}^{p}
\]

\noindent where 
\begin{align*}
K\left(\epsilon,\phi\right) & =\epsilon^{-\beta p}\left|\phi\sup_{y}\left|m\left(\cdot,y\right)\right|\right|_{L_{p}}^{p}+\epsilon^{\left(1-\beta\right)p}\left|\sup_{y}\left|m\left(\cdot,y\right)\right|\nabla\phi\right|_{L_{p}}^{p}\\
 & +\kappa\left(\epsilon\right)^{p}\epsilon^{\left(1-\beta\right)p}\left|\nabla\phi\right|_{L_{p}}^{p}+l\left(\epsilon\right)^{p}\left|\phi\right|_{L_{p}}^{p}+\epsilon^{p}l\left(\epsilon\right)^{p}\left|\nabla\phi\right|_{L_{p}}^{p},\\
l\left(\epsilon\right) & =\int_{\left|y\right|\leq\epsilon}\kappa\left(\left|y\right|\right)\frac{dy}{\left|y\right|^{d+\beta}}.
\end{align*}
\end{lem}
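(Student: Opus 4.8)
\noindent The plan is to reproduce the argument of \cite[Lemma 6]{MPr14} in the present generality. Both parts rest on three ingredients: (a) the translation-invariant continuity $|L^{m,\nu}g|_{L_p}\le NK|L^{\nu}g|_{L_p}$ for coefficients $m=m(y)$ independent of the spatial variable (Remark \ref{rem:continuity}, via Lemma \ref{lem:(Continuity-of-Operators)}); (b) the difference representation of Lemma \ref{lem:diff_frac} together with the bilinear bound of Lemma \ref{lem:product0} and the interpolation inequalities of Lemma \ref{lem:(Interpolation-Inequalities)}; and (c) the Sobolev/Morrey embedding, available since $p>d/\beta\ge d$, in the local form $|\psi|_{L_\infty(Q)}\le N(|\psi|_{L_p(2Q)}+|\nabla\psi|_{L_p(2Q)})$ uniformly over unit cubes $Q$, supplemented by $|f|_{H_p^{\nu;\beta}}\le N|f|_{H_p^{\nu}}$ for $\beta<1$ (Remark \ref{rem:gen_bessel}).

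\noindent For part (i) I would use a frozen-coefficient splitting. Cover $\mathbf R^d$ by a grid of cubes $\{Q_n\}$ of a small side $h$ with centres $x_n$ and write, on $Q_n$, $\phi(x)L^{m,\nu}f(x)=\phi(x)L^{m(x_n,\cdot),\nu}f(x)+\phi(x)(L^{m(x,\cdot),\nu}-L^{m(x_n,\cdot),\nu})f(x)$. The first term has a frozen bounded coefficient, so Remark \ref{rem:continuity} bounds its $L_p(Q_n)$-norm by $NK|\phi|_{L_\infty(Q_n)}|L^{\nu}f|_{L_p}$; raising to the $p$-th power, estimating $|\phi|_{L_\infty(Q_n)}$ by the local Sobolev inequality, summing over $n$ and using bounded overlap gives $NK^p|f|_{H_p^{\nu}}^p(|\phi|_{L_p}^p+|\nabla\phi|_{L_p}^p)$. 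For the second term one uses $|m(x,y)-m(x_n,y)|\le NKh^{\beta}$ on $Q_n$, substitutes the representation of Lemma \ref{lem:diff_frac} for $f(x+y)-f(x)-\chi_\sigma(y)y\cdot\nabla f(x)$ (with $\delta$ in the range permitted by that lemma, and by Lemma \ref{lem:product0} when $\sigma\ge1$), cuts the $y$-integral at an auxiliary scale and invokes the moment bounds of Lemma \ref{lem:alpha12_integral}: the result is a small power of $h$ times $|L^{\nu}f|_{L_p}$ plus quantities that Lemma \ref{lem:(Interpolation-Inequalities)} absorbs, so a final choice of $h$ closes the estimate. The genuinely bilinear remainders that appear when $L^{m,\nu}$ is distributed over the product $\phi f$ (and the companion $\chi_\sigma$-compensation terms) are of the form handled by Lemma \ref{lem:product0}, which contributes the factor $|f|_{H_p^{\nu;\beta}}^p|\phi|_{H_p^1}^p\le|f|_{H_p^{\nu}}^p(|\phi|_{L_p}^p+|\nabla\phi|_{L_p}^p)$.

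\noindent For part (ii) the scheme is the same, but the uniform $h^{\beta}$ control on the variation of $m$ is replaced by the modulus $\kappa$, so the spatial-dependence error is organized by scale instead of being made small. Fix $\varepsilon\in(0,1]$ and split $m(x,y)=m(x,y)\chi_{|y|>\varepsilon}(y)+m(x,y)\chi_{|y|\le\varepsilon}(y)$. On $\{|y|>\varepsilon\}$ there is no singularity: bounding the increments of $f$ (via Lemma \ref{lem:diff_frac}) and of $\phi$ against the $\nu$-mass outside $B_\varepsilon$, whose size is dictated by the O-regular variation of $w_\nu$ through Lemma \ref{lem:alpha12_integral}, yields the $\varepsilon^{-\beta p}|\phi\sup_y|m(\cdot,y)||_{L_p}^p$ and $\varepsilon^{(1-\beta)p}|\sup_y|m(\cdot,y)|\nabla\phi|_{L_p}^p$ contributions. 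On $\{|y|\le\varepsilon\}$ one compares $m(x,y)$ with its value at a point at distance $\lesssim|y|$ from $x$ and uses $|m(x,y)-m(x',y)|\le\kappa(|x-x'|)$; the surviving pieces are weighted by $l(\varepsilon)=\int_{|y|\le\varepsilon}\kappa(|y|)|y|^{-d-\beta}dy$ (against $|\phi|_{L_p}^p$), by $\kappa(\varepsilon)^p\varepsilon^{(1-\beta)p}$ (against $|\nabla\phi|_{L_p}^p$), and by $\varepsilon^p l(\varepsilon)^p$ (the $\chi_\sigma$-compensation, against $|\nabla\phi|_{L_p}^p$). Collecting terms and using Remark \ref{rem:continuity} once more gives $|\phi L^{m,\nu}f|_{L_p}^p\le NK(\varepsilon,\phi)|f|_{H_p^{\nu}}^p$.

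\noindent The hard part is the spatial-dependence error in both statements. Controlling it hinges on transferring regularity from $f$ onto the coefficient through Lemma \ref{lem:diff_frac}; this is only legitimate for $\delta$ in a window whose endpoints are governed by the O-RV indices of $w_\nu$, and one must check this window is nonempty under Assumption \textbf{A} and that, after the scale cut-off, the kernel $k^{\nu;\delta}$ still integrates against $\kappa(|y|)\nu(dy)$ — which is exactly where the hypothesis $\int_{|y|\le1}\kappa(|y|)|y|^{-d-\beta}dy<\infty$ and the scaling of $w_\nu$ (Lemma \ref{lem:alpha12_integral}) enter. Matching the powers of $\varepsilon$ precisely to $K(\varepsilon,\phi)$ is the remaining bookkeeping in part (ii).
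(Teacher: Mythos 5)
Your part (i) has a genuine gap. The grid–of–small–cubes decomposition cannot close: the local Sobolev inequality on a cube $Q_n$ of side $h$ reads $|\phi|_{L_\infty(Q_n)}^p \le C\left(h^{-d}|\phi|_{L_p(2Q_n)}^p + h^{p-d}|\nabla\phi|_{L_p(2Q_n)}^p\right)$, so after summing the frozen–coefficient pieces over $n$ (noting that the bound $|L^{m(x_n,\cdot),\nu}f|_{L_p}\le NK|L^\nu f|_{L_p}$ from Remark \ref{rem:continuity} is global, not local to $Q_n$), the resulting constant is of order $h^{-d}$ and blows up as $h\to 0$. On the other hand, with $h$ fixed the error term involves the coefficient $m(x,\cdot)-m(x_n,\cdot)$ which is still $x$–dependent, so the original difficulty is not resolved; moreover, the direct substitution of Lemma \ref{lem:diff_frac} together with the kernel bound $\int|k^{\nu;\delta}(y',y)|\,dy'\le w(|y|)^\delta$ produces the iterated integral $\int_{|y|\le 1}w(|y|)^\delta\nu(dy)$, which is divergent for the admissible range $\delta<\frac{1}{q_1}\wedge\frac{1}{q_2}<1$ (already so in the model case $\nu(dy)=|y|^{-d-\alpha}dy$). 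There is no absorptive mechanism reconciling the $h^{-d}$ from the first term with the $h^{\beta p}$ gain in the second, and "a final choice of $h$" does not close the argument.

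The route the paper points to (via \cite[Lemma 6]{MPr14}, using only the Sobolev embedding and Remark \ref{rem:continuity}) is different and simpler. One applies the Sobolev embedding not to $\phi$ over small cubes, but to the free coefficient variable $z$ of $m(z,y)$ at $z=x$: since $p>d/\beta$ and $m(\cdot,y),\,\partial_z^\beta m(\cdot,y)$ are bounded by $K$, for each $x$ one has
\[
|L^{m(x,\cdot),\nu}f(x)|^p \le N\int_{B_1(x)}\Bigl(|L^{m(z,\cdot),\nu}f(x)|^p+|L^{\partial_z^\beta m(z,\cdot),\nu}f(x)|^p\Bigr)dz.
\]
Multiplying by $|\phi(x)|^p$, integrating in $x$, and then swapping the $(x,z)$ integrals by Fubini decouples the operator from the coefficient: for fixed $z$ the operators $L^{m(z,\cdot),\nu}$ and $L^{\partial_z^\beta m(z,\cdot),\nu}$ have spatially independent, bounded coefficients, so Remark \ref{rem:continuity} bounds $\int_{B_1(z)}|\phi(x)|^p|L^{m(z,\cdot),\nu}f(x)|^pdx \le |\phi|_{L_\infty(B_1(z))}^p\,NK^p|L^\nu f|_{L_p}^p$, and similarly for the $\partial_z^\beta m$ piece. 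A second, independent application of the Morrey embedding on unit balls (here the scale is fixed, so the constant is absolute) gives $\int_{\mathbf{R}^d}|\phi|_{L_\infty(B_1(z))}^p\,dz\le N\bigl(|\phi|_{L_p}^p+|\nabla\phi|_{L_p}^p\bigr)$, which yields exactly the claimed bound. Note this argument does not invoke Lemmas \ref{lem:diff_frac}, \ref{lem:product0} or \ref{lem:(Interpolation-Inequalities)} at all. Your sketch for (ii) (splitting the $y$–integral at $|y|=\epsilon$ and using the modulus $\kappa$ on the singular part) has the right flavor, but it inherits the same unresolved decoupling issue for the $x$–dependent coefficient on the non–singular part, and the details as given do not track the $\phi$ versus $\nabla\phi$ factors in $K(\epsilon,\phi)$.
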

\noindent 

\section{Equations without the Space Variable }

In this section, we study a special case of (\ref{eq:mainEq}) when
$m=m\left(t,y\right)$ is independent of $x$,
\begin{eqnarray}
du\left(t,x\right) & = & \left[L^{m,\nu}u\left(t,x\right)-\lambda u\left(t,x\right)+f\left(t,x\right)\right]dt\label{mainEq-noX}\\
u\left(0,x\right) & = & 0,\left(t,x\right)\in E=\left[0,T\right]\times\mathbf{R}^{d},\nonumber 
\end{eqnarray}

\noindent where $L^{m,\nu}=\int\left[\varphi\left(x+y\right)-\varphi\left(x\right)-\chi_{\sigma}\left(y\right)y\cdot\nabla\varphi\left(x\right)\right]m\left(t,y\right)\nu\left(dy\right),\varphi\in C_{0}^{\infty}\left(\mathbf{R}^{d}\right).$ 
\begin{lem}
\noindent \label{lem:EU} Let $\nu\in\mathfrak{A}^{\sigma}$, $\left|m\left(t,y\right)\right|\leq K,t\in\left[0,T\right],y\in\mathbf{R}^{d},$
$Z_{t}^{m,\nu}$ be defined in (\ref{f10}). Let $f\in\tilde{C}_{p}^{\infty}\left(E\right),p>1$,
then 
\begin{align*}
T_{\lambda}f\left(t,x\right) & =\int_{0}^{t}e^{-\lambda\left(t-s\right)}\mathbf{E}f\left(s,x+Z_{t}^{m,\nu}-Z_{s}^{m,\nu}\right)ds
\end{align*}
\end{lem}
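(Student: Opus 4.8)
The plan is to show first that $u:=T_\lambda f$ is smooth, lies in $\tilde C_p^\infty(E)\cap\mathcal H_p^\nu(E)$ and solves (\ref{mainEq-noX}); then to prove the a priori bounds by Fourier transform in $x$ together with a parabolic Calder\'on--Zygmund argument resting on the kernel estimates of the previous section; and finally to settle uniqueness on the Fourier side. Since $f\in\tilde C_p^\infty(E)$ one may differentiate $T_\lambda f$ under the integral in $x$, so each $D_x^\gamma u(t,x)=\int_0^t e^{-\lambda(t-s)}\mathbf E D_x^\gamma f(s,x+Z_t^{m,\nu}-Z_s^{m,\nu})\,ds$ again obeys (\ref{eq:p1}); and differentiating in $t$, using that $t\mapsto\mathbf E f(s,\,\cdot+Z_t^{m,\nu}-Z_s^{m,\nu})$ is driven by $L^{m,\nu}$ at time $t$, one gets
\[
\partial_t u(t,x)=L^{m,\nu}u(t,x)-\lambda u(t,x)+f(t,x),\qquad u(0,\cdot)=0 .
\]
As the right-hand side is in $L_p(E)$ and $u(t)=\int_0^t\bigl[L^{m,\nu}u(s)-\lambda u(s)+f(s)\bigr]ds$, we conclude $u\in\mathcal H_p^\nu(E)$ and $u$ solves (\ref{mainEq-noX}) in the required sense.

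The estimate $|L^\nu u|_{L_p(E)}\le N|f|_{L_p(E)}$ for $\lambda\ge\lambda_0$ is the core. On the Fourier side $\widehat{L^\nu u}(t,\xi)=\int_0^t\psi^\nu(\xi)e^{-\lambda(t-s)}\exp\{\int_s^t\psi^{m,\nu}(r,\xi)\,dr\}\hat f(s,\xi)\,ds$, so $f\mapsto L^\nu u$ is the integral operator on $E$ with kernel $\mathcal K_\lambda(s,t,x)=e^{-\lambda(t-s)}L^\nu p^{m,\nu}(s,t,x)$ on $\{s<t\}$. I would check, uniformly in $\lambda$: (a) $L_2(E)$-boundedness — by Plancherel and Lemma~\ref{lem:symbolEst}, $|\psi^\nu(\xi)|\,e^{-\lambda(t-s)}\bigl|\exp\{\int_s^t\psi^{m,\nu}(r,\xi)\,dr\}\bigr|\le N\,w(|\xi|^{-1})^{-1}e^{-c(t-s)w(|\xi|^{-1})^{-1}}$ with $c=c(\nu,k)>0$, whose $L_1$ norm in $t-s$ is $N/c$, independent of $\xi$, so Young's inequality in the time variable closes it; (b) the H\"ormander conditions for $\mathcal K_\lambda$ in $x$ and in $t$, which are exactly Lemma~\ref{lem:prelim_hormander}(i)--(iii) once the factor $e^{-\lambda(t-s)}\le1$ is accounted for; the time increments of that factor produce an extra term $\lesssim\lambda(t-s)e^{-\lambda(s-r)}$, which I absorb using $\int|L^\nu p^{m,\nu}(r,s,x)|\,dx\lesssim(s-r)^{-1}$ (from (\ref{eq:LpiP}) and Lemma~\ref{lem:timeDensity}(ii)) and $\int_{s-b}^\infty e^{-\lambda\tau}\tau^{-1}\,d\tau\le e^{-\lambda(s-b)}/(\lambda(s-b))$, recovering the $\lambda$-free bound $N(t-s)(s-b)^{-1}$. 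A parabolic Calder\'on--Zygmund theorem, with the quasi-metric adapted to the scaling $a(t-s)$, then upgrades (a)+(b) to $|L^\nu u|_{L_p(E)}\le N|f|_{L_p(E)}$ for every $p\in(1,\infty)$ and $\lambda\ge\lambda_0$, with $N$ independent of $\lambda$.

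The other bounds are soft. Since $|\mathbf E f(s,\,\cdot+Z_t^{m,\nu}-Z_s^{m,\nu})|_{L_p}\le|f(s,\cdot)|_{L_p}$ (translation invariance plus Minkowski), Young's inequality in $t$ gives $|u|_{L_p(E)}\le\frac{1-e^{-\lambda T}}{\lambda}|f|_{L_p(E)}\le\rho_\lambda|f|_{L_p(E)}$, and $|u|_{L_p(E)}\le T|f|_{L_p(E)}$ for every $\lambda\ge0$. From $\partial_t u=L^{m,\nu}u-\lambda u+f$, Remark~\ref{rem:continuity} ($|L^{m,\nu}u|_{L_p}\le NK|L^\nu u|_{L_p}$) and $\lambda|u|_{L_p(E)}\le\lambda\rho_\lambda|f|_{L_p(E)}\le|f|_{L_p(E)}$, we get the bound on $\partial_t u$ for $\lambda\ge\lambda_0$. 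For the $H_p^\nu(E)$ bound valid for all $\lambda\ge0$: by uniqueness, $u=T_{\lambda_0}\bigl(f+(\lambda_0-\lambda)u\bigr)$, so the large-$\lambda$ estimate applied to $g=f+(\lambda_0-\lambda)u$, with $|g|_{L_p(E)}\le(1+\lambda_0T)|f|_{L_p(E)}$, yields $|L^\nu u|_{L_p(E)}\le N|f|_{L_p(E)}$, hence $|u|_{H_p^\nu(E)}\le N|f|_{L_p(E)}$ by Remark~\ref{rem:norm_equi}. Finally, for uniqueness: if $u\in\mathcal H_p^\nu(E)$ solves (\ref{mainEq-noX}), then taking the Fourier transform in $x$ and using $u(t)=\int_0^t F$, $F\in L_p(E)$, $\hat u(t,\xi)$ solves the linear ODE $\partial_t\hat u=(\psi^{m,\nu}(t,\xi)-\lambda)\hat u+\hat f$, $\hat u(0,\xi)=0$ for almost every $\xi$, whose unique solution is $\widehat{T_\lambda f}(t,\xi)$; so $u=T_\lambda f$.

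The genuinely delicate point is step (b) above: establishing the H\"ormander cancellation and smoothness conditions for $\mathcal K_\lambda$ in both space and time with constants \emph{independent of $\lambda\ge\lambda_0$}, i.e.\ making sure the factor $e^{-\lambda(t-s)}$ and its time increments do not destroy the uniformity; everything else is bookkeeping with estimates already in hand.
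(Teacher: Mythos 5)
Your proposal conflates Lemma~\ref{lem:EU} with the much harder Proposition~\ref{prop:ME} and the unnamed lemma that closes Section~6. Lemma~\ref{lem:EU} makes no assumptions of $\mathbf{A}$, $\mathbf{B}$, O-regular variation, or the lower bound $m\geq k>0$; it assumes only $|m|\leq K$ and $f\in\tilde{C}_p^{\infty}(E)$, and asserts three soft facts: (i) $T_\lambda f$ solves (\ref{mainEq-noX}), (ii) it is the unique solution in $\tilde C_p^\infty(E)$, and (iii) the crude estimate $|T_\lambda D^\gamma f|_{L_p(E)}\le\rho_\lambda|D^\gamma f|_{L_p(E)}$. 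The whole middle block of your argument --- the Fourier-side kernel $\mathcal K_\lambda$, the $L_2$ bound via Plancherel and Lemma~\ref{lem:symbolEst}, and the H\"ormander conditions from Lemma~\ref{lem:prelim_hormander} --- is the content of Proposition~\ref{prop:ME} (and requires the stronger assumptions $\mathbf{A}$, $\mathbf{B}$ and $m\geq k>0$), not something provable here. Likewise the $|L^\nu u|_{L_p(E)}$, $|\partial_t u|_{L_p(E)}$, and $|u|_{H_p^\nu(E)}$ bounds live in later results.

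For the material that \emph{is} in this lemma, your approach is essentially the paper's. Your ``differentiate under the integral and use that the expectation is driven by $L^{m,\nu}$'' is a restatement of the It\^o-formula argument the paper invokes for existence (applied to $e^{-\lambda r}f(s,x+\cdot)$ along $Z_r^{m,\nu}-Z_s^{m,\nu}$, then integrated in $s$). Your Minkowski-plus-Young derivation of $|T_\lambda f|_{L_p(E)}\le\rho_\lambda|f|_{L_p(E)}$ is exactly what the paper summarizes as ``follows from H\"older inequality.'' The one genuine deviation is uniqueness: you propose a Fourier-side ODE argument, whereas the paper simply cites the argument of \cite[Lemma~7]{MPr14}. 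Your route is plausible --- for $u\in\tilde{C}_p^\infty(E)$ one can Fourier transform in $x$ in the sense of tempered distributions and read off the ODE $\partial_t\hat u=(\psi^{m,\nu}-\lambda)\hat u+\hat f$, $\hat u(0,\cdot)=0$ --- but you should justify that the ODE is meaningful pointwise in $\xi$ (which it is here because $u(t,\cdot)$ is bounded, smooth with all derivatives in $L_p$, hence tempered, and in fact lies in $L_1+L_\infty$). This is more elementary than citing the martingale-problem argument of \cite{MPr14}, at the cost of assuming from the start the strong regularity $\tilde C_p^\infty$ rather than just $\mathcal H_p^\nu$; for this lemma that is exactly the class asserted, so it is fine.
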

\noindent is the unique solution in $\tilde{C}_{p}^{\infty}\left(E\right)$
of (\ref{mainEq-noX}). Moreover, for any multiindex $\gamma\in\mathbf{N}_{0}^{d}$,
\begin{equation}
\left|T_{\lambda}D^{\gamma}f\right|_{L_{p}\left(E\right)}\leq\rho_{\lambda}\left|D^{\gamma}f\right|_{L_{p}\left(E\right)}\label{eq:simpleEST}
\end{equation}

\noindent where $\rho_{\lambda}=T\wedge\frac{1}{\lambda}.$
\begin{proof}
\noindent Uniqueness follows from the argument of \cite[Lemma 7]{MPr14}.
For existence, we adapt the argument of \cite[Lemma 8]{MPh1} with
obvious changes i.e., applying It\^{o} formula (for a semi-martingale
e.g. \cite[Proposition 8.19]{CT}) with $e^{-\lambda r}f\left(s,x+\cdot\right)$
and $Z_{r}^{m,\nu}-Z_{s}^{m,\nu},s\leq r\leq t$ and afterward taking
the expectation and integrating in $s$. The estimate (\ref{eq:simpleEST})
follows from Hölder inequality.
\end{proof}
The goal now is to derive the main estimate,
\[
\left|L^{\nu}T_{\lambda}f\right|_{L_{p}\left(E\right)}\leq N\left|f\right|_{L_{p}\left(E\right)}.
\]

\begin{prop}
\noindent \label{prop:ME} Let $\nu\in\mathfrak{A}^{\sigma},w=w_{\nu}$
be a continuous O-RV function and \textbf{A, B} hold. Let $0<k\leq m\left(t,y\right)\leq K,t\in\left[0,T\right],y\in\mathbf{R}^{d}.$
Then for any $p>1$, there exists $N=N\left(d,p,\nu,k,K\right)$ so
that
\[
\left|L^{\nu}T_{\lambda}f\right|_{L_{p}\left(E\right)}\leq N\left|f\right|_{L_{p}\left(E\right)},f\in\tilde{C}_{p}^{\infty}\left(E\right).
\]
\end{prop}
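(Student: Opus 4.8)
The plan is to realise $L^{\nu}T_{\lambda}$ as a Calder\'on--Zygmund operator on $E$, equipped with a parabolic quasi-distance, with $L_{2}$-boundedness supplied by the symbol bounds of Lemma~\ref{lem:symbolEst} and kernel regularity by Lemma~\ref{lem:prelim_hormander}. By density it suffices to treat $f\in C_{0}^{\infty}\big((0,T)\times\mathbf{R}^{d}\big)$. By Lemma~\ref{lem:timeDensity}(i),
\[
T_{\lambda}f(t,x)=\int_{0}^{t}e^{-\lambda(t-s)}\int_{\mathbf{R}^{d}}p^{m,\nu}(s,t,x-y)\,f(s,y)\,dy\,ds ;
\]
since $\nu\prec\nu$ by Lemma~\ref{lem:alpha12_integral}, the scaling identity \eqref{eq:LpiP} together with Lemma~\ref{lem:timeDensity}(ii) (with $m=1$, $\pi=\nu$) give $\int|L^{\nu}p^{m,\nu}(s,t,x)|\,dx\le N(t-s)^{-1}$ and $\int|L^{\nu}\nabla p^{m,\nu}(s,t,x)|\,dx\le N(t-s)^{-1}a(t-s)^{-1}$, which legitimises moving $L^{\nu}$ under the integral sign, so that $L^{\nu}T_{\lambda}f=\mathcal{T}f$ with
\[
\mathcal{T}f(t,x)=\int_{E}\mathcal{K}\big((t,x),(s,y)\big)f(s,y)\,ds\,dy,\qquad
\mathcal{K}\big((t,x),(s,y)\big)=\mathbf{1}_{\{0<s<t\}}\,e^{-\lambda(t-s)}\,L^{\nu}p^{m,\nu}(s,t,x-y).
\]
I would equip $E$ with the quasi-distance $\varrho\big((t,x),(s,y)\big)=|t-s|+w(|x-y|)$; by the O--RV scaling of $w$ and of its left-continuous inverse $a$ (Appendix), $\varrho$ satisfies the quasi-triangle inequality and the $\varrho$-ball of radius $r$ has Lebesgue measure comparable to $r\,a(r)^{d}$, so $(E,\varrho,dt\,dx)$ is a space of homogeneous type.

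For $L_{2}(E)$-boundedness I would pass to the Fourier transform in $x$: $\widehat{\mathcal{T}f}(t,\xi)=\int_{0}^{t}k_{\xi}(t,s)\,\widehat{f}(s,\xi)\,ds$ with $k_{\xi}(t,s)=\mathbf{1}_{\{s<t\}}e^{-\lambda(t-s)}\psi^{\nu}(\xi)\exp\{\int_{s}^{t}\psi^{m,\nu}(r,\xi)\,dr\}$. Lemma~\ref{lem:symbolEst} with $R=1$ gives $|\psi^{\nu}(\xi)|\le Nw(|\xi|^{-1})^{-1}$ and $\mathfrak{R}\,\psi^{m,\nu}(r,\xi)\le -nk\,w(|\xi|^{-1})^{-1}$, hence $|k_{\xi}(t,s)|\le Nw(|\xi|^{-1})^{-1}\exp\{-nk(t-s)w(|\xi|^{-1})^{-1}\}$ and
\[
\sup_{t\le T}\int_{0}^{t}|k_{\xi}(t,s)|\,ds+\sup_{s\ge 0}\int_{s}^{T}|k_{\xi}(t,s)|\,dt\le\frac{N}{nk}
\]
uniformly in $\xi$. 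Schur's test on $L_{2}(0,T)$ for each fixed $\xi$, followed by integration in $\xi$ and Plancherel's theorem, yields $|\mathcal{T}f|_{L_{2}(E)}\le N|f|_{L_{2}(E)}$.

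It remains to verify, for fixed $\lambda\ge0$ and with $N$ independent of $\lambda$, the Hörmander condition in the first variable of $\mathcal{K}$: for all $(t_{0},x_{0})\in E$, $r>0$ and $(t,x)$ with $\varrho\big((t,x),(t_{0},x_{0})\big)<r$,
\[
\int_{\varrho((s,y),(t_{0},x_{0}))>Cr}\big|\mathcal{K}\big((t,x),(s,y)\big)-\mathcal{K}\big((t_{0},x_{0}),(s,y)\big)\big|\,dy\,ds\le N .
\]
I would decompose the kernel difference into a spatial increment $L^{\nu}p^{m,\nu}(s,t,x-y)-L^{\nu}p^{m,\nu}(s,t,x_{0}-y)$, a second-time-variable increment $L^{\nu}p^{m,\nu}(s,t,\cdot)-L^{\nu}p^{m,\nu}(s,t_{0},\cdot)$, and the increment of the scalar factor $e^{-\lambda(t-s)}$, and split the far region into $\{|s-t_{0}|\ge Cr/2\}$ and $\{|y-x_{0}|\ge a(Cr/2),\,|s-t_{0}|<Cr/2\}$. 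On the first piece (where $t-s,\,t_{0}-s\gtrsim r$) the three increments are dominated, after absorbing constants via the doubling of $a$, by Lemma~\ref{lem:prelim_hormander}(ii), by Lemma~\ref{lem:prelim_hormander}(iii), and by the elementary bound $\lambda r\,e^{\lambda r}\int_{Cr/2}^{\infty}\tau^{-1}e^{-\lambda\tau}\,d\tau\le N$ (uniform in $\lambda\ge0$) together with $\int|L^{\nu}p^{m,\nu}(s,t_{0},x)|\,dx\le N(t_{0}-s)^{-1}$; on the second piece the whole kernel difference is bounded by Lemma~\ref{lem:prelim_hormander}(i) with $\beta=\alpha_{2}$, applied to each of the two terms and using $|x-y|\ge a(Cr/2)/2$ for $C$ large. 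By the Calder\'on--Zygmund theorem on spaces of homogeneous type (cf.\ \cite{Ste} for the Euclidean case and \cite{MPr14} for the parabolic use), applied to the transpose $\mathcal{T}^{*}$, whose kernel thus has the required smoothness in its integration variable, $\mathcal{T}^{*}$ is of weak type $(1,1)$ and bounded on $L_{q}(E)$ for $1<q\le2$; equivalently $\mathcal{T}$ is bounded on $L_{p}(E)$ for $2\le p<\infty$, with constant depending only on $d,p,\nu,k,K$. For $1<p\le2$ I would argue by duality: computing $\mathcal{T}^{*}$ and substituting $(s,t)\mapsto(T-t,T-s)$ identifies it, up to the $L_{q}$-isometry $g\mapsto g(T-\cdot,\cdot)$, with the operator $L^{\nu}T_{\lambda}$ attached to the coefficient $\bar{m}(r,y)=m(T-r,y)$, which again satisfies $0<k\le\bar{m}\le K$; the bound for $2\le q<\infty$ just established, applied to $\bar{m}$, then yields the desired estimate for $1<p\le2$.

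The main difficulty is the last step: one must split the far region correctly according to the parabolic geometry, line up each resulting piece with the appropriate part of Lemma~\ref{lem:prelim_hormander}, and check that the factor $e^{-\lambda(t-s)}$ introduces no loss of uniformity in $\lambda$; setting up the quasi-metric framework and carrying out the time-reversal duality connecting $p\ge2$ with $p\le2$ also require some care.
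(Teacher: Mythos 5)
Your proposal is correct and follows essentially the same route as the paper's proof: both obtain the $L_{2}$ bound by passing to Fourier in $x$ and invoking the symbol estimates of Lemma~\ref{lem:symbolEst}, both verify a H\"ormander condition in the parabolic quasi-metric built from $a$ and $w$ by splitting the kernel difference into a spatial increment, a time increment, and the increment of $e^{-\lambda(t-s)}$, each controlled by the corresponding part of Lemma~\ref{lem:prelim_hormander}, and both pass from $p\geq 2$ to $1<p\leq 2$ by duality. The only superficial differences are that the paper uses Minkowski's inequality rather than Schur's test for the $L_{2}$ step, cites the H\"ormander criterion of \cite[Theorem~3.8]{KKK} directly rather than setting up the space of homogeneous type from scratch, and leaves the duality step as ``standard,'' whereas you spell out the time-reversal that identifies the transpose with an operator of the same class.
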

\noindent 
\begin{proof}
\noindent We will again follow a classical route of Calderon-Zygmund
theorem for $L_{p}-$ boundedness of singular value integrals. Clearly,
it suffices to show the boundedness of 

\[
T_{\lambda}^{\epsilon}f\left(t,x\right)=\chi_{\epsilon<t}\int_{0}^{t-\epsilon}e^{-\lambda\left(t-s\right)}\mathbf{E}f\left(s,x+Z_{t}-Z_{s}\right)ds
\]

\noindent with estimates independent of $\epsilon\in\left(0,1\right).$
In particular, we prove the estimates in two steps. 

\noindent Step I. We prove the estimate for $p=2$, indeed applying
Plancherel's theorem, and by Lemma \ref{lem:symbolEst}, there are
$N,c>0$ such that $\left|\psi^{\nu}\left(\xi\right)\right|\leq Nw\left(\left|\xi\right|^{-1}\right)^{-1}$
and $\left|\psi^{m,\nu}\left(r,\xi\right)\right|\geq cw\left(\left|\xi\right|^{-1}\right)^{-1},r\in\left[0,T\right],\xi\in\mathbf{R}^{d}.$
Hence, by Minkowski inequality, 
\begin{align*}
 & \left|L^{\nu}T_{\lambda}^{\epsilon}f\right|_{L_{2}\left(E\right)}^{2}\\
 & \leq\left|\psi^{\nu}\left(\xi\right)\int_{0}^{t-\epsilon}\exp\left\{ \int_{s}^{t}\psi^{m,\nu}\left(r,\xi\right)dr-\lambda\left(t-s\right)\right\} \hat{f}\left(s,\xi\right)ds\right|_{L_{2}\left(E\right)}^{2}\\
 & \leq N\int\int_{0}^{T}\left(w\left(\left|\xi\right|^{-1}\right)^{-1}\int_{0}^{t-\epsilon}\exp\left\{ c\left(t-s\right)w\left(\left|\xi\right|^{-1}\right)^{-1}\right\} \left|\hat{f}\left(s,\xi\right)\right|ds\right)^{2}dtd\xi\\
 & =N\int\int_{0}^{T}\left(w\left(\left|\xi\right|^{-1}\right)^{-1}\int_{0}^{t}\exp\left\{ csw\left(\left|\xi\right|^{-1}\right)^{-1}\right\} \left|\hat{f}\left(t-s,\xi\right)\right|ds\right)^{2}dtd\xi\\
 & \leq N\int w\left(\left|\xi\right|^{-1}\right)^{-2}\left(\int_{0}^{T}\exp\left\{ csw\left(\left|\xi\right|^{-1}\right)^{-1}\right\} \left(\int_{s}^{T}\left|\hat{f}\left(t-s,\xi\right)\right|^{2}dt\right)^{1/2}ds\right)^{2}d\xi\\
 & \leq N\left|f\right|_{L_{2}\left(E\right)}^{2}.
\end{align*}

\noindent Step II. We prove the estimate for $p>2$ by verifying a
suitable Hörmander condition. Note that the estimate for $1<p<2$
follows from a standard duality argument. 

We apply \cite[Theorem 3.8]{KKK} which states that it is sufficient
to verify a Hörmander-type condition (\ref{eq:hor0}). It is worth
mentioning that differentiability assumptions in \cite[Assumption 4.1]{KKK}
prevents direct application of their Hörmander condition even in the
case of $\lambda=0.$ That said, we use some of their ideas in \cite[Corollary 4.11]{KKK}
for splitting of integrals. 

First we specify $c\geq1$ such that 
\begin{equation}
a\left(x+y\right)\leq c\left(a\left(x\right)+a\left(y\right)\right),x,y>0.\label{eq:triA}
\end{equation}
which is possible due to Lemma \ref{lem:index_a} (ii). In particular,
we have 
\begin{equation}
a\left(2x\right)\leq2ca\left(x\right),x>0.\label{eq:doubleA}
\end{equation}

Define for $\left(t,x\right),\left(s,y\right)\in E,r>0$ a region
\[
\mathcal{R}=\mathcal{R}\left(t,x,s,y,r\right)=\left\{ z\in\mathbf{R}^{d}:a\left(\left|t-r\right|\right)+\left|x-z\right|\geq4c\left(a\left(\left|t-s\right|\right)+\left|x-y\right|\right)\right\} .
\]

We must show that there exists $N>0$ such that for all $\left(t,x\right)$,$\left(s,y\right)\in E,$ 

\noindent 
\begin{align}
 & \int_{0}^{\infty}\int_{\mathcal{R}}\left|\chi_{0<r<t-\epsilon}e^{-\lambda\left(t-r\right)}L^{\nu}p^{m^{\ast},\nu^{\ast}}\left(r,t,x-z\right)-\chi_{0<r<s-\epsilon}e^{-\lambda\left(s-r\right)}L^{\nu}p^{m^{\ast},\nu^{\ast}}\left(r,s,y-z\right)\right|dzdr\nonumber \\
\leq & N.\label{eq:hor0}
\end{align}

We now prove (\ref{eq:hor0}) assume without loss of generality that
$0<\epsilon<s\leq t$ and $2s-t>0.$ 

\noindent 
\begin{align*}
 & \int_{0}^{\infty}\int_{\mathcal{R}}\left|\chi_{0<r<t-\epsilon}e^{-\lambda\left(t-r\right)}L^{\nu}p^{m^{\ast},\nu^{\ast}}\left(r,t,x-z\right)-\chi_{0<r<s-\epsilon}e^{-\lambda\left(s-r\right)}L^{\nu}p^{m^{\ast},\nu^{\ast}}\left(r,s,y-z\right)\right|dzdr\\
= & \int_{2s-t}^{t}\int_{\mathcal{R}}...+\int_{0}^{2s-t}\int_{\mathcal{R}}...=\mathcal{I}{}_{1}+\mathcal{I}_{2}.
\end{align*}

\noindent \textbf{Estimate of $\mathcal{I}_{1}$}: if $2s-t<r<t$
then in $\mathcal{R}$, $\left(t-r\right)<2\left(t-s\right)$, and
by (\ref{eq:doubleA}) 

\noindent 
\begin{align*}
\left|x-z\right| & \geq4c\left(a\left(\left|t-s\right|\right)+\left|x-y\right|\right)-a\left(2\left|t-s\right|\right)\\
 & \geq2a\left(\left|t-s\right|\right),
\end{align*}
and 
\[
\left|y-z\right|\geq\left|x-z\right|-\left|x-y\right|\geq2a\left(\left|t-s\right|\right).
\]

\noindent Therefore, by Lemma \ref{lem:prelim_hormander} (i) and
(\ref{eq:doubleA}), 

\noindent 
\begin{align*}
\mathcal{I}_{1} & \leq\int_{2s-t}^{t}\int_{\left|x-z\right|\geq2a\left(\left|t-s\right|\right)}\left|L^{\nu}p^{m^{\ast},\nu^{\ast}}\left(r,t,x-z\right)\right|dzdr\\
 & +\int_{2s-t}^{t}\int_{\left|y-z\right|\geq2a\left(\left|t-s\right|\right)}\left|L^{\nu}p^{m^{\ast},\nu^{\ast}}\left(r,s,y-z\right)\right|dzdr\\
 & \leq N2^{-\beta}a\left(t-s\right)^{-\beta}a\left(2\left(t-s\right)\right)^{\beta}\\
 & \leq N.
\end{align*}

\noindent \textbf{Estimate of $\mathcal{I}_{2}$}: 

\noindent 
\begin{align*}
 & \mathcal{I}_{2}\\
 & =\int_{0}^{2s-t}\int_{\mathcal{R}}\left|\chi_{0<r<t-\epsilon}e^{-\lambda\left(t-r\right)}L^{\nu}p^{m^{\ast},\nu^{\ast}}\left(r,t,x-z\right)-\chi_{0<r<t-\epsilon}e^{-\lambda\left(t-r\right)}L^{\nu}p^{m^{\ast},\nu^{\ast}}\left(r,t,y-z\right)\right|dzdr\\
 & +\int_{0}^{2s-t}\int_{\mathcal{R}}\left|\chi_{0<r<t-\epsilon}e^{-\lambda\left(t-r\right)}L^{\nu}p^{m^{\ast},\nu^{\ast}}\left(r,t,y-z\right)-\chi_{0<r<s-\epsilon}e^{-\lambda\left(s-r\right)}L^{\nu}p^{m^{\ast},\nu^{\ast}}\left(r,s,y-z\right)\right|dzdr\\
 & =\mathcal{I}_{2,1}+\mathcal{I}_{2,2}.
\end{align*}

\medskip

We consider $\mathcal{I}_{2,1}$ in two cases.

\noindent case i: $\left|x-y\right|\leq a\left(2\left(t-s\right)\right)$,
by Lemma \ref{lem:prelim_hormander} (ii)

\noindent 
\[
\mathcal{I}_{2,1}\leq N\left|x-y\right|a\left(2\left(t-s\right)\right)^{-1}\leq N.
\]

\noindent case ii: $\left|x-y\right|>a\left(2\left(t-s\right)\right)$
then 
\begin{equation}
w\left(\left|x-y\right|\right)\geq w\left(a\left(2\left(t-s\right)\right)\right)=2\left(t-s\right).\label{eq:wts}
\end{equation}

In this case, we split $\mathcal{I}_{2,1}$ further. 
\begin{align*}
\mathcal{I}_{2,1} & \leq\int_{0}^{\left(s-w\left(\left|x-y\right|\right)\right)\vee0}\int_{\mathcal{R}}...+\int_{\left(s-w\left(\left|x-y\right|\right)\right)\vee0}^{t}\int_{\mathcal{R}}...\\
 & =\mathcal{I}_{2,1,1}+\mathcal{I}_{2,1,2}.
\end{align*}

\noindent By Lemma \ref{lem:prelim_hormander} (ii), and Lemma \ref{lem:index_a}
(i),
\begin{align*}
\mathcal{I}_{2,1,1} & \leq N\left|x-y\right|a\left(t-s+w\left(\left|x-y\right|\right)\right)^{-1}\\
 & \leq N\left|x-y\right|a\left(w\left(\left|x-y\right|\right)+\right)^{-1}\\
 & \leq N.
\end{align*}

We now estimate $\mathcal{I}_{2,1,2}$. First, we observe that if
$s-w\left(\left|x-y\right|\right)<r<t$ then by (\ref{eq:wts}), (\ref{eq:doubleA}),
and Lemma \ref{lem:index_a} (i), 
\begin{align*}
a\left(t-r\right) & \leq a\left(t-s+w\left(\left|x-y\right|\right)\right)\leq a\left(\frac{3}{2}w\left(\left|x-y\right|\right)\right)\\
 & \leq2ca\left(\frac{3}{4}w\left(\left|x-y\right|\right)\right)\leq2c\left|x-y\right|.
\end{align*}

\noindent Therefore, we have in $\mathcal{R},$ 
\begin{align*}
2c\left|x-y\right|+\left|x-z\right|\geq a\left(\left|t-r\right|\right)+\left|x-z\right| & \geq4c\left(a\left(\left|t-s\right|\right)+\left|x-y\right|\right)
\end{align*}

\noindent which implies 
\[
\left|x-z\right|\geq4ca\left(\left|t-s\right|\right)+2c\left|x-y\right|\geq a\left(\left|t-s\right|\right)+\left|x-y\right|
\]
 and also
\[
\left|y-z\right|\geq\left|x-z\right|-\left|x-y\right|\geq a\left(\left|t-s\right|\right)+\left|x-y\right|.
\]

Hence, by Lemma \ref{lem:prelim_hormander} (i), (\ref{eq:triA})
and Lemma \ref{lem:index_a} (i), 
\begin{align*}
\mathcal{I}_{2,1,2} & \leq2\int_{\left(s-w\left(\left|x-y\right|\right)\right)\vee0}^{t}\int_{\left|z\right|\geq a\left(\left|t-s\right|\right)+\left|x-y\right|}\left|L^{\nu}p\left(r,t,z\right)\right|dzdr\\
 & \leq N\left(a\left(\left|t-s\right|\right)+\left|x-y\right|\right)^{-\beta}a\left(t-s+w\left(\left|x-y\right|\right)\right)^{\beta}\\
 & \leq N\left(a\left(\left|t-s\right|\right)+\left|x-y\right|\right)^{-\beta}\left(a\left(\left|t-s\right|\right)+\left|x-y\right|\right)^{\beta}\\
 & \leq N.
\end{align*}

Finally, we proceed to estimate $\mathcal{I}_{2,2}.$
\begin{align*}
 & \mathcal{I}_{2,2}\\
 & \leq\int_{0}^{2s-t}\int_{\mathcal{R}}\left|\chi_{0<r<t-\epsilon}e^{-\lambda\left(t-r\right)}L^{\nu}p^{m^{\ast},\nu^{\ast}}\left(r,t,y-z\right)-\chi_{0<r<s-\epsilon}e^{-\lambda\left(s-r\right)}L^{\nu}p^{m^{\ast},\nu^{\ast}}\left(r,s,y-z\right)\right|dzdr\\
 & =\int_{0}^{\left(s-\epsilon\right)\wedge\left(2s-t\right)}\int_{\mathcal{R}}...+\int_{\left(s-\epsilon\right)\wedge\left(2s-t\right)}^{2s-t}\int_{\mathcal{R}}...\\
 & =\mathcal{I}_{2,2,1}+\mathcal{I}_{2,2,2}.
\end{align*}

\noindent By Lemma \ref{lem:prelim_hormander} (iii) , (\ref{eq:LpiP})
and Lemma \ref{lem:timeDensity} (ii),
\begin{align*}
\mathcal{I}_{2,2,1}\leq & \int_{0}^{2s-t}\int e^{-\lambda\left(t-r\right)}\left|L^{\nu}p^{m^{\ast},\nu^{\ast}}\left(r,t,y-z\right)-L^{\nu}p^{m^{\ast},\nu^{\ast}}\left(r,s,y-z\right)\right|dzdr\\
+ & \int_{0}^{2s-t}\int\left|e^{-\lambda\left(t-r\right)}-e^{-\lambda\left(s-r\right)}\right|\left|L^{\nu}p^{m^{\ast},\nu^{\ast}}\left(r,s,y-z\right)\right|dzdr\\
\leq & N\left(t-s\right)\left(t-s\right)^{-1}+N\left|e^{-\lambda\left(t-s\right)}-1\right|\int_{t-s}^{s}\frac{e^{-\lambda r}}{r}dr\\
\leq & N+N\left(1\wedge\lambda\left(t-s\right)\right)\int_{\lambda\left(t-s\right)}^{\infty}\frac{e^{-r}}{r}dr.
\end{align*}

\noindent By elementary computation, one can easily check that the
function $f\left(x\right)=\left(1\wedge x\right)\int_{x}^{\infty}\frac{e^{-r}}{r}dr,x>0$
is bounded and thus $\mathcal{I}_{2,2,1}\leq N.$ 

For $\mathcal{I}_{2,2,2}$, we may assume $s-\epsilon<2s-t$ and consider
$\mathcal{I}_{2,2,2}$ in two cases. 

\noindent case i. $t-\epsilon>2s-t$ then $2\left(t-s\right)>\epsilon$,
by (\ref{eq:LpiP}) and Lemma \ref{lem:timeDensity} (ii), 

\begin{align*}
\mathcal{I}_{2,2,2}\leq & \int_{s-\epsilon}^{2s-t}\int\left|e^{-\lambda\left(t-r\right)}L^{\nu}p^{m^{\ast},\nu^{\ast}}\left(r,t,y-z\right)\right|dzdr\\
\leq & \int_{s-\epsilon}^{2s-t}\frac{e^{-\lambda\left(t-r\right)}}{t-r}dr=\int_{2\left(t-s\right)}^{t-s+\epsilon}\frac{e^{-\lambda r}}{r}dr\leq\ln\left(\frac{t-s+\epsilon}{2\left(t-s\right)}\right)\leq\ln\left(3/2\right).
\end{align*}

\noindent case ii. $t-\epsilon\leq2s-t$ then $2\left(t-s\right)\leq\epsilon$,
by (\ref{eq:LpiP}) and Lemma \ref{lem:timeDensity} (ii), 
\begin{align*}
\mathcal{I}_{2,2,2}\leq & \int_{s-\epsilon}^{t-\epsilon}\int\left|e^{-\lambda\left(t-r\right)}L^{\nu}p^{m^{\ast},\nu^{\ast}}\left(r,t,y-z\right)\right|dzdr\\
\leq & \int_{s-\epsilon}^{t-\epsilon}\frac{e^{-\lambda\left(t-r\right)}}{t-r}dr=\int_{\epsilon}^{t-s+\epsilon}\frac{e^{-\lambda r}}{r}dr\leq\ln\left(\frac{t-s+\epsilon}{\epsilon}\right)\leq\ln\left(3/2\right).
\end{align*}

The proof is complete for $0<\epsilon<s\leq t$ and $2s-t>0.$ The
statement for other cases follows from basically the same but simpler
argument. 
\end{proof}
We conclude this section with the existence and uniqueness result
for a general input functions.
\begin{lem}
\noindent Let $\nu\in\mathfrak{A}^{\sigma},w=w_{\nu}$ be a continuous
O-RV function and \textbf{A, B} hold. Let $p>1,$ $0<k\leq m\left(t,y\right)\leq K,t\in\left[0,T\right],y\in\mathbf{R}^{d}$.
Then for each $f\in L_{p}\left(E\right)$ there exists a unique solution
$u\in\mathcal{H}_{p}^{\nu}\left(E\right)$ of (\ref{mainEq-noX}).
Moreover there exist $N=N\left(d,p,\nu,k,K\right)$ such that 
\[
\left|L^{\nu}u\right|_{L_{p}\left(E\right)}\leq N\left|f\right|_{L_{p}\left(E\right)}
\]
and 
\[
\left|u\right|_{L_{p}\left(E\right)}\leq\rho_{\lambda}\left|f\right|_{L_{p}\left(E\right)}.
\]
\end{lem}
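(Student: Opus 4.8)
The plan is to deduce this final existence and uniqueness statement from the a priori estimate in Proposition \ref{prop:ME} together with the explicit representation in Lemma \ref{lem:EU}, by a routine density argument. First I would fix $f\in L_{p}(E)$ and choose a sequence $f_{n}\in\tilde{C}_{p}^{\infty}(E)$ with $f_{n}\to f$ in $L_{p}(E)$; this is possible since $\tilde{C}_{p}^{\infty}(E)$ contains, e.g., smooth compactly supported functions, which are dense in $L_{p}(E)$. For each $n$, Lemma \ref{lem:EU} provides a solution $u_{n}=T_{\lambda}f_{n}\in\tilde{C}_{p}^{\infty}(E)$ of (\ref{mainEq-noX}) with right-hand side $f_{n}$, and Proposition \ref{prop:ME} (applied to $f_{n}-f_{m}$, using linearity $T_{\lambda}(f_{n}-f_{m})=u_{n}-u_{m}$) gives
\[
\left|L^{\nu}(u_{n}-u_{m})\right|_{L_{p}(E)}\leq N\left|f_{n}-f_{m}\right|_{L_{p}(E)},
\]
while the elementary estimate (\ref{eq:simpleEST}) with $\gamma=0$ gives $\left|u_{n}-u_{m}\right|_{L_{p}(E)}\leq\rho_{\lambda}\left|f_{n}-f_{m}\right|_{L_{p}(E)}$. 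Hence $(u_{n})$ is Cauchy in $L_{p}(E)$ and $(L^{\nu}u_{n})$ is Cauchy in $L_{p}(E)$, so $u_{n}\to u$ in $H_{p}^{\nu}(E)$ (using the equivalent norm from Remark \ref{rem:norm_equi}) for some $u\in H_{p}^{\nu}(E)$, and $L^{\nu}u_{n}\to L^{\nu}u$ in $L_{p}(E)$. Passing to the limit in the two displayed inequalities yields $\left|L^{\nu}u\right|_{L_{p}(E)}\leq N\left|f\right|_{L_{p}(E)}$ and $\left|u\right|_{L_{p}(E)}\leq\rho_{\lambda}\left|f\right|_{L_{p}(E)}$.

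Next I would verify that the limit $u$ is a solution in the sense of the Definition, i.e. that $u\in\mathcal{H}_{p}^{\nu}(E)$ and
\[
u(t)=\int_{0}^{t}\left[L^{m,\nu}u(s)-\lambda u(s)+f(s)\right]ds
\]
holds in $L_{p}(\mathbf{R}^{d})$ for each $t\in[0,T]$. Since each $u_{n}$ solves (\ref{mainEq-noX}) with data $f_{n}$, we have $u_{n}(t)=\int_{0}^{t}F_{n}(s)\,ds$ with $F_{n}=L^{m,\nu}u_{n}-\lambda u_{n}+f_{n}$. By Remark \ref{rem:continuity}, $\left|L^{m,\nu}(u_{n}-u_{m})\right|_{L_{p}}\leq NK\left|L^{\nu}(u_{n}-u_{m})\right|_{L_{p}}$, so $L^{m,\nu}u_{n}$ converges in $L_{p}(E)$; consequently $F_{n}\to F:=L^{m,\nu}u-\lambda u+f$ in $L_{p}(E)$, and passing to the limit in $u_{n}(t)=\int_{0}^{t}F_{n}(s)\,ds$ (the map $F\mapsto\int_{0}^{\cdot}F(s)\,ds$ is bounded from $L_{p}(E)$ into $C([0,T];L_{p}(\mathbf{R}^{d}))$ by H\"older's inequality) gives $u(t)=\int_{0}^{t}F(s)\,ds$ with $F\in L_{p}(E)$. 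Thus $u\in\mathcal{H}_{p}^{\nu}(E)$, $L^{m,\nu}u\in L_{p}(E)$, and $u$ is a solution.

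For uniqueness, suppose $u^{(1)},u^{(2)}\in\mathcal{H}_{p}^{\nu}(E)$ are two solutions with the same $f$; then $v=u^{(1)}-u^{(2)}\in\mathcal{H}_{p}^{\nu}(E)$ solves (\ref{mainEq-noX}) with zero data. One approach is to mollify: for $\phi_{\varepsilon}$ a spatial mollifier, $v*\phi_{\varepsilon}(t,\cdot)\in\tilde{C}_{p}^{\infty}$ for each $t$ (all derivatives are controlled in $L_{p}$ and in sup-norm because $L^{\nu}v\in L_{p}(E)$ gives enough smoothness via the embedding $H_{p}^{\nu}\hookrightarrow$ suitable spaces; alternatively one argues directly), and $v*\phi_{\varepsilon}$ solves the same equation with data $f*\phi_{\varepsilon}=0$; then the uniqueness part of Lemma \ref{lem:EU} forces $v*\phi_{\varepsilon}=0$, hence $v=0$ after letting $\varepsilon\to 0$. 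A cleaner alternative, which avoids regularity bookkeeping, is to invoke the duality/energy argument already used in \cite[Lemma 7]{MPr14} and cited in the proof of Lemma \ref{lem:EU}: testing the equation against $L^{\nu^{*};*}$-type functionals and using the resolvent representation of Proposition \ref{prop:ME} shows that any $\mathcal{H}_{p}^{\nu}(E)$-solution with zero data must vanish. I expect the uniqueness step to be the main obstacle, since it requires either a careful mollification argument to reduce to the smooth case of Lemma \ref{lem:EU} or a direct duality argument; everything else is a standard closure of the a priori estimates under density.
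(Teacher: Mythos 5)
Your proposal follows the same density/a priori estimate strategy as the paper's proof: approximate $f$ by $f_{n}\in\tilde{C}_{p}^{\infty}(E)$, invoke Lemma \ref{lem:EU} and Proposition \ref{prop:ME} to show $(u_{n})$ and $(L^{\nu}u_{n})$ are Cauchy, pass to the limit in $H_{p}^{\nu}(E)$, and transfer the estimates. Your filling in of the details that the paper compresses into ``passing to the limit, $u$ is a solution'' (using Remark \ref{rem:continuity} to get $L^{m,\nu}u_{n}\to L^{m,\nu}u$ and hence $u\in\mathcal{H}_{p}^{\nu}(E)$) is exactly what is needed, and your mollification route for uniqueness is a legitimate way to supply the argument the paper simply cites from \cite{MPh1} and \cite{MPr14}; note it works cleanly here precisely because $m$ is $x$-independent in (\ref{mainEq-noX}), so $L^{m,\nu}$ commutes with spatial convolution.
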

\begin{proof}
\noindent There exists a sequence $f_{n}\in\tilde{C}_{p}^{\infty}\left(E\right)$
such that $f_{n}\rightarrow f$ in $L_{p}\left(E\right)$. Applying
Lemma \ref{lem:EU} and Proposition \ref{prop:ME}, denoting the solution
of (\ref{mainEq-noX}) with $f=f_{n}$ by $u_{n}$, we have
\[
\left|u_{n}-u_{m}\right|_{L_{p}\left(E\right)}\leq\rho_{\lambda}\left|f_{n}-f_{m}\right|_{L_{p}\left(E\right)}\rightarrow0,\hspace{1em}m>n\rightarrow\infty.
\]
\[
\left|L^{\nu}u_{n}-L^{\nu}u_{m}\right|_{L_{p}\left(E\right)}\leq N\left|f_{n}-f_{m}\right|_{L_{p}\left(E\right)}\rightarrow0,\hspace{1em}m>n\rightarrow\infty.
\]

\noindent Since $H_{p}^{\nu}\left(E\right)$ is a Banach space, $u_{n}\rightarrow u\in H_{p}^{\nu}\left(E\right)$.
Passing to the limit, $u$ is a solution of (\ref{mainEq-noX}) with
$f$. Moreover, solution estimates follow from Lemma \ref{lem:EU}
and Proposition \ref{prop:ME}. Uniqueness can be proved as in proof
of \cite[Theorem 2]{MPh1} or \cite[Proposition 1]{MPr14} with minor
changes. 
\end{proof}

\section{Proof of the Main Theorem}

The proof of the main theorem is based on Lemma 8, Lemma 9 and Corollary
3 of \cite{MPr14} with our generalized auxiliary results in previous
sections in place of the original versions. We therefore only provide
a sketch of the proof omitting repetitive details and preserve some
notations from the original proof as needed for reader's convenience. 

\medskip

\noindent \textbf{Proof of Theorem} \ref{Thm:main}.
\begin{proof}
\noindent Step I. (\cite[Lemma 8]{MPr14}) We derive apriori estimate
assuming that the solution is compactly supported. Suppose that $u\in\tilde{C}_{p}^{\infty}\left(E\right)$
satisfying (\ref{Thm:main}) with a support of a ball of radius $\epsilon$
centered at $x_{0}$ 
\begin{align*}
\partial_{t}u & =L_{t,x_{0}}^{m,\nu}u\left(t,x\right)+L_{t,x}^{m,\nu}u\left(t,x\right)-L_{t,x_{0}}^{m,\nu}u\left(t,x\right)-\lambda u+f\\
u\left(0\right) & =0.
\end{align*}

Following the computation in the original proof and applying Lemma
\ref{lem:EU}, Proposition \ref{prop:ME}, Lemma \ref{lem:emb2} (ii),
and Remark \ref{rem:gen_bessel}, we derive

\noindent 
\begin{align*}
 & \left|L^{\nu}u\right|_{L_{p}\left(E\right)}\\
 & \leq N\left[\left|f\right|_{L_{p}\left(E\right)}+K\left(\epsilon\right)\left|u\right|_{H_{p}^{\nu}\left(E\right)}+w\left(\epsilon\right)^{-1}\left(\left|u\right|_{L_{p}\left(E\right)}+1_{\sigma>1}\left|\nabla u\right|_{L_{p}\left(E\right)}\right)\right]\\
 & \leq N\left[\left|f\right|_{L_{p}\left(E\right)}+K\left(\epsilon\right)C\left|L^{\nu}u\right|_{L_{p}\left(E\right)}+\left(w\left(\epsilon\right)^{-1}+C\right)\left(\left|u\right|_{L_{p}\left(E\right)}+1_{\sigma>1}\left|\nabla u\right|_{L_{p}\left(E\right)}\right)\right]
\end{align*}
and

\noindent 
\begin{align*}
 & \left|u\right|_{L_{p}\left(E\right)}\\
 & \leq\rho_{\lambda}\left[\left|f\right|_{L_{p}\left(E\right)}+K\left(\epsilon\right)C\left|L^{\nu}u\right|_{L_{p}\left(E\right)}+\left(w\left(\epsilon\right)^{-1}+C\right)\left(\left|u\right|_{L_{p}\left(E\right)}+1_{\sigma>1}\left|\nabla u\right|_{L_{p}\left(E\right)}\right)\right]
\end{align*}

\noindent where $K\left(\epsilon\right)\rightarrow0$ as $\epsilon\rightarrow0.$
Now we choose $\epsilon$ so that $NK\left(\epsilon\right)C\leq\frac{1}{2}$
and $K\left(\epsilon\right)C\le1,$ then
\[
\left|L^{\nu}u\right|_{L_{p}\left(E\right)}\leq2N\left[\left|f\right|_{L_{p}\left(E\right)}+\left(w\left(\epsilon\right)^{-1}+C\right)\left(\left|u\right|_{L_{p}\left(E\right)}+1_{\sigma>1}\left|\nabla u\right|_{L_{p}\left(E\right)}\right)\right]
\]

\noindent and 
\[
\left|u\right|_{L_{p}\left(E\right)}\leq\rho_{\lambda}\left(1+2N\right)\left[\left|f\right|_{L_{p}\left(E\right)}+\left(w\left(\epsilon\right)^{-1}+C\right)\left(\left|u\right|_{L_{p}\left(E\right)}+1_{\sigma>1}\left|\nabla u\right|_{L_{p}\left(E\right)}\right)\right].
\]

If $\sigma>1$, we apply interpolation inequality in Lemma \ref{lem:(Interpolation-Inequalities)}
(ii). For $\epsilon_{1}>0$ there is $C\left(\epsilon_{1}\right)>0$
independent of $u$ such that 
\[
\left|\nabla u\right|_{L_{p}\left(E\right)}\leq\epsilon_{1}\left|L^{\nu}u\right|_{L_{p}\left(E\right)}+C\left(\epsilon_{1}\right)\left|u\right|_{L_{p}\left(E\right)}.
\]

\noindent Thus by choosing $\epsilon_{1}$ sufficiently small so that
$2N\left(w\left(\epsilon\right)^{-1}+C\right)\epsilon_{1}\leq\frac{1}{2}$,
there is $N_{1}>0$ such that 

\noindent 
\[
\left|L^{\nu}u\right|_{L_{p}\left(E\right)}\leq N_{1}\left[\left|f\right|_{L_{p}\left(E\right)}+\left|u\right|_{L_{p}\left(E\right)}\right]
\]

\noindent and 
\[
\left|u\right|_{L_{p}\left(E\right)}\leq N_{1}\rho_{\lambda}\left[\left|f\right|_{L_{p}\left(E\right)}+\left|u\right|_{L_{p}\left(E\right)}\right].
\]

Finally, if we choose $\lambda>\lambda_{1}$ sufficiently large so
that $\frac{N_{1}}{\lambda}\leq\frac{1}{2}$ then there is $N_{2}>0$
such that 
\begin{align}
\left|L^{\nu}u\right|_{L_{p}\left(E\right)} & \leq N_{2}\left|f\right|_{L_{p}\left(E\right)}\label{eq:est_compact}\\
\left|u\right|_{L_{p}\left(E\right)} & \leq N_{2}\rho_{\lambda}\left|f\right|_{L_{p}\left(E\right)}.\nonumber 
\end{align}

\noindent Step II. (\cite[Lemma 9]{MPr14}) The estimate (\ref{eq:est_compact})
in Step I. is extended to any $u\in\tilde{C}_{p}^{\infty}\left(E\right)$
satisfying (\ref{eq:mainEq}). We let $\zeta\in C_{0}^{\infty}\left(\mathbf{R}^{d}\right)$
which has a compact support in a ball of radius $\epsilon$ centered
at $0$ then
\begin{equation}
\left|L^{\nu}u\left(t,x\right)\right|^{p}=\int\left|L^{\nu}u\left(t,x\right)\zeta\left(x-z\right)\right|^{p}dz\label{eq:zeta-0}
\end{equation}
\begin{align}
L^{\nu}u\left(t,x\right)\zeta\left(x-z\right) & =L^{\nu}\left[u\left(t,x\right)\zeta\left(x-z\right)\right]-u\left(t,x\right)L^{\nu}\zeta\left(x-z\right)\label{eq:zeta1}\\
 & +\int\left[u\left(t,x+y\right)-u\left(t,x\right)\right]\left[\zeta\left(x+y-z\right)-\zeta\left(x-z\right)\right]\nu\left(dy\right).\nonumber 
\end{align}

By using (\ref{eq:mainEq}), $u_{z}\left(t,x\right)=u\left(t,x\right)\zeta\left(x-z\right)$
must satisfies 
\begin{align*}
 & \partial_{t}\left(u_{z}\left(t,z\right)\right)\\
= & L^{m,\nu}u_{z}-\lambda u_{z}\left(t,z\right)+\zeta\left(x-z\right)f\left(t,x\right)-u\left(t,x\right)L^{m,\nu}\zeta\left(x-z\right)\\
- & \int\left[u\left(t,x+y\right)-u\left(t,x\right)\right]\left[\zeta\left(x+y-z\right)-\zeta\left(x-z\right)\right]m\left(t,x,y\right)\nu\left(dy\right)
\end{align*}

Since $u_{z}$ is compactly supported by a ball of radius $\epsilon$
centered at $z$, we may apply the estimate from Step I. By (\ref{eq:est_compact})
and Lemma \ref{lem:product0}, there exists $\delta\in\left(0,1\right)$
such that if $\lambda>\lambda_{1}$ ($\lambda_{1}$ from Step I) then
\begin{align*}
\int\left|L^{\nu}u_{z}\right|_{L_{p}\left(E\right)}^{p}dz & \leq N_{2}\left(\left|f\right|_{L_{p}\left(E\right)}^{p}+\left|u\right|_{L_{p}\left(E\right)}^{p}+\left|u\right|_{H_{p}^{\nu;\delta}\left(E\right)}^{p}\right)\\
\int\left|u_{z}\right|_{L_{p}\left(E\right)}^{p}dz & \leq N_{2}\rho_{\lambda}^{p}\left(\left|f\right|_{L_{p}\left(E\right)}^{p}+\left|u\right|_{L_{p}\left(E\right)}^{p}+\left|u\right|_{H_{p}^{\nu;\delta}\left(E\right)}^{p}\right).
\end{align*}

\noindent According to (\ref{eq:zeta-0}) and (\ref{eq:zeta1}), there
is $N_{2}>0$ such that 
\begin{align*}
\left|L^{\nu}u\right|_{L_{p}\left(E\right)}^{p} & \leq N_{2}\left(\left|f\right|_{L_{p}\left(E\right)}^{p}+\left|u\right|_{L_{p}\left(E\right)}^{p}+\left|u\right|_{H_{p}^{\nu;\delta}\left(E\right)}^{p}\right)\\
\left|u\right|_{L_{p}\left(E\right)}^{p} & \leq N_{2}\rho_{\lambda}^{p}\left(\left|f\right|_{L_{p}\left(E\right)}^{p}+\left|u\right|_{L_{p}\left(E\right)}^{p}+\left|u\right|_{H_{p}^{\nu;\delta}\left(E\right)}^{p}\right).
\end{align*}

Applying the interpolation inequality in Lemma \ref{lem:(Interpolation-Inequalities)}
(i) with $\epsilon>0$ and again choosing sufficiently small $\epsilon$
and sufficiently large $\lambda>\lambda_{2}>\lambda_{1}$, then there
is $N_{3}>0$ such that
\begin{align}
\left|L^{\nu}u\right|_{L_{p}\left(E\right)} & \leq N_{3}\left|f\right|_{L_{p}\left(E\right)}\label{eq:est_non_compact}\\
\left|u\right|_{L_{p}\left(E\right)} & \leq N_{3}\rho_{\lambda}\left|f\right|_{L_{p}\left(E\right)}.\nonumber 
\end{align}

\noindent Step III. (\cite[Corollary 3]{MPr14}) The estimate (\ref{eq:est_non_compact})
is extended to all $\lambda\geq0$ exactly with the same argument
as in the original proof. Note that the bounding constant becomes
dependent on $T.$ 

\noindent Step IV. (\cite[Proof of Theorem 1]{MPr14}) The proof is
completed by using apriori estimate from Step III. and a continuation
of parameter argument for the operator 
\[
M_{\tau}u=\tau L^{m,\nu}u+\left(1-\tau\right)L^{\nu}u,\hspace{1em},u\in\mathcal{H}_{p}^{\nu}\left(E\right),\tau\in\left[0,1\right].
\]

Note that by \cite[Theorem 1]{MPh3} the mapping $T_{\tau}$ given
by $u\left(t,x\right)=\int_{0}^{t}F\left(s,x\right)ds\rightarrow F-M_{\tau}u$
is an onto map from $\mathcal{H}_{p}^{\nu}\left(E\right)\rightarrow L_{p}\left(E\right)$
when $\tau=0.$ Owing to \cite[Theorem 5.2]{GT}, $T_{\tau}$ is an
onto map for any $\tau\in\left[0,1\right]$ if we can show that $\left|T_{\tau}u\right|_{L_{p}\left(E\right)}\asymp\left|u\right|_{\mathcal{H}_{p}^{\nu}\left(E\right)}$
with constants independent of $\tau.$

First, we show that 
\begin{equation}
\left|L^{m,\nu}u\right|_{L_{p}\left(E\right)}\leq N\left|u\right|_{H_{p}^{\nu}\left(E\right)}\label{eq:Lmnu <=00003D u}
\end{equation}
 by considering $L^{m,\nu}$ in (\ref{eq:zeta-0}) and (\ref{eq:zeta1})
instead of $L^{\nu}$, in fact,
\[
\left|L^{m,\nu}u\left(t,x\right)\right|^{p}=\int\left|L^{m,\nu}u\left(t,x\right)\zeta\left(x-z\right)\right|^{p}dz
\]
\begin{align*}
L^{m,\nu}u\left(t,x\right)\zeta\left(x-z\right) & =L^{m,\nu}\left[u\left(t,x\right)\zeta\left(x-z\right)\right]-u\left(t,x\right)L^{m,\nu}\zeta\left(x-z\right)\\
 & +\int\left[u\left(t,x+y\right)-u\left(t,x\right)\right]\left[\zeta\left(x+y-z\right)-\zeta\left(x-z\right)\right]m\left(t,y\right)\nu\left(dy\right)\\
 & =A_{1}+A_{2}+A_{3}.
\end{align*}

\noindent Following \cite[Lemma 5.2]{DK}, $A_{2}$ is estimated directly
by Minkowski inequality, $A_{3}$ is estimated by Lemma \ref{lem:product0}
and $A_{1}$ is estimated by Lemma \ref{lem:emb2} (ii).

Consequently, by (\ref{eq:Lmnu <=00003D u}), 

\begin{align*}
\left|T_{\tau}u\right|_{L_{p}\left(E\right)} & =\left|F-M_{\tau}u\right|_{L_{p}\left(E\right)}\leq\left|F\right|_{L_{p}\left(E\right)}+N\left|u\right|_{H_{p}^{\nu}\left(E\right)}\\
 & \leq N\left|u\right|_{\mathcal{H}_{p}^{\nu}\left(E\right)}.
\end{align*}

\noindent where $N$ is independent of $\tau.$

For the reverse inequality, we write
\[
u\left(t,x\right)=\int_{0}^{t}F\left(s,x\right)ds=\int_{0}^{t}T_{\tau}u\left(s,x\right)+M_{\tau}u\left(s,x\right)ds.
\]

\noindent By (\ref{eq:est_non_compact}), 
\begin{equation}
\left|u\right|_{H_{p}^{\nu}\left(E\right)}\leq N\left|T_{\tau}u\right|_{L_{p}\left(E\right)}=N\left|F-M_{\tau}u\right|_{L_{p}\left(E\right)}.\label{eq:last}
\end{equation}
By (\ref{eq:Lmnu <=00003D u}) and (\ref{eq:last}), 
\[
\left|M_{\tau}u\right|_{L_{p}\left(E\right)}\leq N\left|u\right|_{H_{p}^{\nu}\left(E\right)}\leq N\left|F-M_{\tau}u\right|_{L_{p}\left(E\right)}.
\]

\noindent Hence, 
\begin{align*}
\left|u\right|_{\mathcal{H}_{p}^{\nu}\left(E\right)} & =\left|u\right|_{H_{p}^{\nu}\left(E\right)}+\left|F\right|_{L_{p}\left(E\right)}\leq N\left(\left|F-M_{\tau}u\right|_{L_{p}\left(E\right)}+\left|M_{\tau}u\right|_{L_{p}\left(E\right)}\right)\\
 & \leq N\left|F-M_{\tau}u\right|_{L_{p}\left(E\right)}=N\left|T_{\tau}u\right|_{L_{p}\left(E\right)}.
\end{align*}

\noindent where $N$ is independent of $\tau.$ 

\noindent The proof is finished. 
\end{proof}

\section{Appendix}

We collect some results on O-RV functions which are used frequently
in this paper. Some results are taken from \cite{MF,MPh3} and slightly
modified for our need. 

\noindent 
\begin{lem}
\label{lem:alpha12_ratio}(\cite[Lemma 1]{MPh3}) Assume $w\left(r\right),r>0,$
is an O-RV function at zero and infinity with indices $p_{1},q_{1},p_{2},q_{2}$
defined in (\ref{1}), (\ref{2}), and $p_{1},p_{2}>0.$ Then for
any $\alpha_{1}>q_{1}\vee q_{2}$ , $0<\alpha_{2}<p_{1}\wedge p_{2}$,
there exist $c_{1}=c_{1}\left(\alpha_{1}\right),c_{2}=c_{2}\left(\alpha_{2}\right)>0$
such that 
\[
c_{1}\left(\frac{y}{x}\right)^{\alpha_{2}}\leq\frac{w\left(y\right)}{w\left(x\right)}\leq c_{2}\left(\frac{y}{x}\right)^{\alpha_{1}},0<x\leq y<\infty.
\]
\end{lem}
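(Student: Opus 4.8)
The plan is to extract the bound from the two definitions of the lower indices $p_1,p_2$ and upper indices $q_1,q_2$ together with the general theory of O-RV functions (Karamata-type representation and the uniform convergence statements in \cite{AA,BGT}). The key point is that $r_1(x)=\limsup_{\delta\to0}w(\delta x)/w(\delta)$ and $r_2(x)=\limsup_{\delta\to\infty}w(\delta x)/w(\delta)$ are themselves submultiplicative functions of $x$, so by the classical submultiplicativity lemma (Theorem~2 in \cite{AA}) the limits in (\ref{1}) and (\ref{2}) exist and one has the two-sided power bounds $r_1(x)\le C_\epsilon x^{p_1-\epsilon}$ for $x\le 1$ and $r_1(x)\le C_\epsilon x^{q_1+\epsilon}$ for $x\ge 1$, and similarly for $r_2$ with $p_2,q_2$. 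First I would record that for $0<x\le y$ and writing $t=y/x\ge 1$, the ratio $w(y)/w(x)=w(xt)/w(x)$ is controlled, uniformly in $x$ lying near $0$ (resp. near $\infty$), by $r_1(t)$ (resp. $r_2(t)$) up to a multiplicative constant; this is where the O-RV (as opposed to merely pointwise) hypothesis enters, via the uniform boundedness of $w(\delta x)/w(\delta)$ on compact $x$-sets.

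The main steps, in order, are: (i) fix $\alpha_1>q_1\vee q_2$ and $0<\alpha_2<p_1\wedge p_2$, and choose $\epsilon>0$ so small that $q_1+\epsilon<\alpha_1$, $q_2+\epsilon<\alpha_1$, $\alpha_2<p_1-\epsilon$, $\alpha_2<p_2-\epsilon$; (ii) derive, from the existence of the limits in (\ref{1}), (\ref{2}), the estimates $r_i(t)\le c\, t^{\alpha_1}$ for all $t\ge 1$ and $r_i(t)\ge c\, t^{\alpha_2}$ for all $t\ge 1$, $i=1,2$ (the lower bound for $r_i(t)$ when $t\ge1$ follows by applying the upper bound to $r_i(1/t)$ and using $r_i(t)r_i(1/t)\ge r_i(1)=1$, or directly from the representation); (iii) split the range of $x$ into a neighbourhood of $0$, a neighbourhood of $\infty$, and a compact middle interval, and on each piece bound $w(y)/w(x)$ above and below by the appropriate power of $y/x$ times a constant; (iv) patch the three regimes together, absorbing the finitely many transition constants into $c_1,c_2$, using that $w$ is continuous and positive (hence bounded above and below on any compact interval of $(0,\infty)$) to handle the middle range and the overlaps.

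The step I expect to be the main obstacle is (iii)--(iv): passing from the \emph{asymptotic} control of $w(y)/w(x)$ (valid only when $x$ is near $0$ or $y$ is near $\infty$) to a genuinely \emph{global} inequality for all $0<x\le y<\infty$. The difficulty is that when $x$ and $y$ both lie in a fixed compact subinterval of $(0,\infty)$ there is no scaling to exploit, so one must use continuity and positivity of $w$ to get a crude two-sided bound there, and then check that the constant produced by gluing the near-zero estimate, the compact-middle estimate, and the near-infinity estimate is still of the claimed form $c(y/x)^{\alpha_1}$ (resp. $c(y/x)^{\alpha_2}$) — this requires that the ``boundary'' values where the regimes meet are comparable, which again follows from continuity. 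Once this bookkeeping is done the lemma follows; since this is a standard fact about O-RV functions (essentially Potter-type bounds), and it is quoted from \cite[Lemma~1]{MPh3}, I would in fact simply cite that reference, and the above is the outline of the proof that lies behind it.
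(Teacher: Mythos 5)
The paper itself offers no proof of this lemma; it is quoted verbatim from \cite[Lemma 1]{MPh3}, exactly as you observe in your final remark, so in that sense your approach (cite the reference) is the same as the paper's. Your sketch of what lies behind the citation is broadly sound, but two points deserve sharpening. First, in your step (ii) the lower bound on $w(y)/w(x)$ is not obtained by bounding $r_i(t)$ from below: the quantity $r_i(t)$ is a $\limsup$, and what one actually needs is a lower bound on $\liminf_{\delta}w(\delta t)/w(\delta)$, which equals $1/r_i(1/t)$; so the correct route is the \emph{upper} Potter bound for $r_i$ applied at $1/t\le1$, not the submultiplicativity inequality $r_i(t)r_i(1/t)\ge1$, which only constrains the $\limsup$. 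Second, and more importantly, the phrase ``uniform boundedness of $w(\delta x)/w(\delta)$ on compact $x$-sets'' is strictly weaker than what your step (iii) uses: to control $w(xt)/w(x)$ with $x$ near $0$ you need $t=y/x$ to range over all of $[1,\infty)$ (with $xt$ still near $0$), and compact-set uniformity does not cover that. What is actually invoked is the Karamata/Potter representation for O-RV functions at zero (and its mirror at infinity), which yields the genuinely two-parameter estimate $w(y)/w(x)\le C(y/x)^{q_1+\epsilon}$ for all $0<x\le y\le B_0$ simultaneously, and similarly below and at infinity. Once that is made explicit, your three-regime gluing argument in (iii)--(iv) goes through exactly as you describe, with the mixed cases ($x$ small, $y$ large) handled by inserting the fixed ratios $w(A_\infty)/w(B_0)$ etc.\ and absorbing them into the constants.
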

\begin{lem}
\noindent \label{lem:al1}(\cite[Lemma 8]{MF}) Assume $w\left(r\right),r>0,$
is an O-RV function at zero with lower and upper indices $p_{1},q_{1},$
that is, 
\[
r_{1}\left(x\right)=\overline{\lim_{\varepsilon\rightarrow0}}\frac{w\left(\epsilon x\right)}{w\left(\epsilon\right)}<\infty,x>0,
\]
and 
\[
p_{1}=\lim_{\epsilon\rightarrow0}\frac{\log r_{1}\left(\epsilon\right)}{\log\epsilon}\leq q_{1}=\lim_{\epsilon\rightarrow\infty}\frac{\log r_{1}\left(\epsilon\right)}{\log\left(\epsilon\right)}.
\]
a) Let $\beta>0$ and $\tau>-\beta p_{1}.$ There is $C>0$ so that
\[
\int_{0}^{x}t^{\tau}w\left(t\right)^{\beta}\frac{dt}{t}\leq Cx^{\tau}w\left(x\right)^{\beta},x\in(0,1],
\]
and $\lim_{x\rightarrow0}x^{\tau}w\left(x\right)^{\beta}=0.$

\noindent b) Let $\beta>0$ and $\tau<-\beta q_{1}$. There is $C>0$
so that 
\[
\int_{x}^{1}t^{\tau}w\left(t\right)^{\beta}\frac{dt}{t}\leq Cx^{\tau}w\left(x\right)^{\beta},x\in(0,1],\text{ }
\]
and $\lim_{x\rightarrow0}x^{\tau}w\left(x\right)^{\beta}=\infty.$

\noindent c) Let $\beta<0$ and $\tau>-\beta q_{1}$. There is $C>0$
so that 
\[
\int_{0}^{x}t^{\tau}w\left(t\right)^{\beta}\frac{dt}{t}\leq Cx^{\tau}w\left(x\right)^{\beta},x\in(0,1],
\]
and $\lim_{x\rightarrow0}x^{\tau}w\left(x\right)^{\beta}=0.$

\noindent d) Let $\beta<0$ and $\tau<-\beta p_{1}$. There is $C>0$
so that 
\[
\int_{x}^{1}t^{\tau}w\left(t\right)^{\beta}\frac{dt}{t}=\int_{1}^{x^{-1}}t^{-\tau}w\left(\frac{1}{t}\right)^{\beta}\frac{dt}{t}\leq Cx^{\tau}w\left(x\right)^{\beta},x\in(0,1],
\]
and $\lim_{x\rightarrow0}x^{\tau}w\left(x\right)^{\beta}=\infty.$ 
\end{lem}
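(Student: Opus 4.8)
The plan is to deduce all four parts from a two‑sided power (Potter‑type) bound for $w$ near the origin and then integrate explicit powers of $t$. I would organize the argument in two steps.

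\emph{Step 1: a localized Potter estimate.} Since $w$ is O‑RV at zero, the ratio function $r_{1}$ is submultiplicative, $r_{1}(xy)\le r_{1}(x)r_{1}(y)$, and the existence of the limits $p_{1},q_{1}$ (cf.\ \cite[Theorem 2]{AA}) together with the continuity and positivity of $w$ yield, exactly as in the proof of Lemma~\ref{lem:alpha12_ratio} but using only the behaviour at $0$: for every $\alpha_{1}>q_{1}$ and every $\alpha_{2}\in(0,p_{1})$ there are $c_{1}=c_{1}(\alpha_{2})>0$ and $c_{2}=c_{2}(\alpha_{1})>0$ with
\[
c_{1}\left(\frac{y}{x}\right)^{\alpha_{2}}\le\frac{w(y)}{w(x)}\le c_{2}\left(\frac{y}{x}\right)^{\alpha_{1}},\qquad 0<x\le y\le 1 .
\]
Taking $y=1$ gives $c_{2}^{-1}w(1)\,x^{\alpha_{1}}\le w(x)\le c_{1}^{-1}w(1)\,x^{\alpha_{2}}$ for $x\in(0,1]$, so that $x^{\tau}w(x)^{\beta}$ is squeezed between constant multiples of $x^{\tau+\alpha_{1}\beta}$ and $x^{\tau+\alpha_{2}\beta}$ (the two exponents interchanging their roles of upper/lower when $\beta<0$). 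In parts (a) and (c) one picks the relevant exponent so close to the admissible endpoint that $\tau+\alpha\beta>0$, which is possible precisely because $\tau>-\beta p_{1}$ (resp.\ $\tau>-\beta q_{1}$); then $x^{\tau}w(x)^{\beta}\le Cx^{\tau+\alpha\beta}\to0$. In parts (b) and (d) the same reasoning with $\tau+\alpha\beta<0$ (using $\tau<-\beta q_{1}$, resp.\ $\tau<-\beta p_{1}$) gives $x^{\tau}w(x)^{\beta}\ge Cx^{\tau+\alpha\beta}\to\infty$. This disposes of all the limit statements.

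\emph{Step 2: the integral bounds.} For (a) ($\beta>0$) choose $\alpha_{2}\in(0,p_{1})$ with $\tau+\alpha_{2}\beta>0$; for $0<t\le x\le1$ the lower bound in Step~1 gives $w(t)\le c_{1}^{-1}(t/x)^{\alpha_{2}}w(x)$, hence
\[
\int_{0}^{x}t^{\tau}w(t)^{\beta}\,\frac{dt}{t}\le c_{1}^{-\beta}x^{-\alpha_{2}\beta}w(x)^{\beta}\int_{0}^{x}t^{\tau+\alpha_{2}\beta}\,\frac{dt}{t}=\frac{c_{1}^{-\beta}}{\tau+\alpha_{2}\beta}\,x^{\tau}w(x)^{\beta}.
\]
Part (c) ($\beta<0$) is identical with $\alpha_{1}>q_{1}$ chosen so that $\tau+\alpha_{1}\beta>0$ (possible since $\tau>-\beta q_{1}$), now using the upper bound $w(t)\ge c_{2}^{-1}(t/x)^{\alpha_{1}}w(x)$ and $\beta<0$. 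For (b) ($\beta>0$) choose $\alpha_{1}>q_{1}$ with $\tau+\alpha_{1}\beta<0$; for $0<x\le t\le1$ the upper bound gives $w(t)\le c_{2}(t/x)^{\alpha_{1}}w(x)$, and since $\int_{x}^{1}t^{\tau+\alpha_{1}\beta}\,\frac{dt}{t}\le|\tau+\alpha_{1}\beta|^{-1}\,x^{\tau+\alpha_{1}\beta}$ one gets $\int_{x}^{1}t^{\tau}w(t)^{\beta}\,\frac{dt}{t}\le C\,x^{\tau}w(x)^{\beta}$. Part (d) ($\beta<0$) is the same with $\alpha_{2}\in(0,p_{1})$ chosen so that $\tau+\alpha_{2}\beta<0$ (possible since $\tau<-\beta p_{1}$) and the lower bound $w(t)\ge c_{1}(t/x)^{\alpha_{2}}w(x)$; the displayed identity there is just the change of variable $t\mapsto1/t$.

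The only step that is not a mechanical computation is Step~1: promoting the mere finiteness of the limsup ratios at $0$, together with the values of $p_{1},q_{1}$, to a uniform power bound valid on all of $(0,1]$. This is the O‑RV Potter theorem (cf.\ \cite{AA,BGT} and the proof of Lemma~\ref{lem:alpha12_ratio}); it uses the submultiplicativity of $r_{1}$ and a compactness argument to pass from small scales to $(0,1]$. Once it is available, parts (a)--(d) follow by the elementary power‑function estimates above.
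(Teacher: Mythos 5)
Your proof is correct. The paper does not actually prove this lemma; it only cites \cite[Lemma 8]{MF} and lists the statement in the Appendix as a standing auxiliary result. Your two-step scheme --- a local Potter-type estimate on $(0,1]$ followed by elementary power integration --- is the natural and standard route, and it is internally consistent with the rest of the paper: the bound you use in Step~1 is precisely the one-sided (behaviour-at-zero-only) version of Lemma~\ref{lem:alpha12_ratio}, requiring only $p_{1},q_{1}$ rather than both index pairs. All four integral estimates and all four limit statements follow correctly from the exponent choices you make (in each case the strict inequality on $\tau$ leaves room to pick $\alpha_{1}$ or $\alpha_{2}$ slightly inside the admissible range so that $\tau+\alpha\beta$ has the right sign), and the change of variable $t\mapsto 1/t$ in part (d) checks out. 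The one step you correctly flag as not purely mechanical is promoting the $\limsup$ ratio bound near zero to a uniform power bound on all of $(0,1]$: this requires the submultiplicativity of $r_{1}$ to get a power bound in a neighbourhood $(0,\epsilon_{0}]$, and then the continuity and strict positivity of $w$ on the compact interval $[\epsilon_{0},1]$ to patch that estimate across the rest of $(0,1]$. Since the paper assumes $w$ continuous and positive, this goes through, and your argument is a complete proof of the cited lemma.
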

\noindent Similar statement holds for O-RV functions at infinity.
\begin{lem}
\noindent \label{lem:al2}(\cite[Lemma 3]{MPh3}) Assume $w\left(r\right),r>0,$
is an O-RV function at infinity with lower and upper indices $p_{2},q_{2},$
that is, 
\[
r_{2}\left(x\right)=\overline{\lim_{\varepsilon\rightarrow\infty}}\frac{w\left(\epsilon x\right)}{w\left(\epsilon\right)}<\infty,x>0,
\]
and 
\[
p_{2}=\lim_{\epsilon\rightarrow0}\frac{\log r_{2}\left(\epsilon\right)}{\log\epsilon}\leq q_{2}=\lim_{\epsilon\rightarrow\infty}\frac{\log r_{2}\left(\epsilon\right)}{\log\left(\epsilon\right)}.
\]
a) Let $\beta>0$ and $-\tau>\beta q_{2}.$ There is $C>0$ so that
\[
\int_{x}^{\infty}t^{\tau}w\left(t\right)^{\beta}\frac{dt}{t}\leq Cx^{\tau}w\left(x\right)^{\beta},x\in\lbrack1,\infty),
\]
and $\lim_{x\rightarrow\infty}x^{\tau}w\left(x\right)^{\beta}=0.$

\noindent b) Let $\beta>0$ and $-\tau<\beta p_{2}$. There is $C>0$
so that 
\[
\int_{1}^{x}t^{\tau}w\left(t\right)^{\beta}\frac{dt}{t}\leq Cx^{\tau}w\left(x\right)^{\beta},x\in\lbrack1,\infty),\text{ }
\]
and $\lim_{x\rightarrow\infty}x^{\tau}w\left(x\right)^{\beta}=\infty.$

\noindent c) Let $\beta<0$ and $\tau<-\beta p_{2}$. There is $C>0$
so that 
\[
\int_{x}^{\infty}t^{\tau}w\left(t\right)^{\beta}\frac{dt}{t}\leq Cx^{\tau}w\left(x\right)^{\beta},x\in\lbrack1,\infty),
\]
and $\lim_{x\rightarrow\infty}x^{\tau}w\left(x\right)^{\beta}=0.$

\noindent d) Let $\beta<0$ and $\tau>-\beta q_{2}$. There is $C>0$
so that 
\[
\int_{1}^{x}t^{\tau}w\left(t\right)^{\beta}\frac{dt}{t}\leq Cx^{\tau}w\left(x\right)^{\beta},x\in\lbrack1,\infty),
\]
and $\lim_{x\rightarrow\infty}x^{\tau}w\left(x\right)^{\beta}=\infty.$ 
\end{lem}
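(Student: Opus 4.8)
The plan is to deduce Lemma~\ref{lem:al2} from Lemma~\ref{lem:al1} by the reciprocal substitution $t\mapsto 1/t$, which interchanges the behaviour of a function at $0$ and at $\infty$. Set $v(s)=w(1/s)$, $s>0$. First I would check that $v$ is an O-RV function at zero: with $r_1^{v}$ the $\limsup$ at zero from (\ref{eq:def_r}) associated to $v$, the substitution $\eta=1/\epsilon$ gives
\[
r_1^{v}(x)=\limsup_{\epsilon\to 0}\frac{w(1/(\epsilon x))}{w(1/\epsilon)}=\limsup_{\eta\to\infty}\frac{w(\eta/x)}{w(\eta)}=r_2^{w}(1/x)<\infty ,\qquad x>0 .
\]
Taking logarithmic limits as in (\ref{1})--(\ref{2}) (again with $\eta=1/\epsilon$ and $\log\epsilon=-\log\eta$) yields the index identities $p_1^{v}=-q_2^{w}$ and $q_1^{v}=-p_2^{w}$, where $p_1^{v},q_1^{v}$ are the lower and upper indices of $v$ at zero and $p_2^{w},q_2^{w}$ those of $w$ at infinity. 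These indices may be negative, which is permitted in Lemma~\ref{lem:al1}.

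Next I would rewrite each of the four integrals of Lemma~\ref{lem:al2} in terms of $v$. For $x\in[1,\infty)$ and $s=1/t$ one has $dt/t=-ds/s$, so that
\[
\int_{x}^{\infty}t^{\tau}w(t)^{\beta}\,\frac{dt}{t}=\int_{0}^{1/x}s^{-\tau}v(s)^{\beta}\,\frac{ds}{s},\qquad
\int_{1}^{x}t^{\tau}w(t)^{\beta}\,\frac{dt}{t}=\int_{1/x}^{1}s^{-\tau}v(s)^{\beta}\,\frac{ds}{s},
\]
with $1/x\in(0,1]$. I then apply Lemma~\ref{lem:al1} to the function $v$ with the exponent $-\tau$ in place of $\tau$. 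For instance, part a) of Lemma~\ref{lem:al2} ($\beta>0$, $-\tau>\beta q_2$) matches part a) of Lemma~\ref{lem:al1}, whose hypothesis $-\tau>-\beta p_1^{v}=\beta q_2^{w}$ is precisely what is assumed; part b) ($\beta>0$, $-\tau<\beta p_2$) matches part b), whose hypothesis is $-\tau<-\beta q_1^{v}=\beta p_2^{w}$; and parts c) and d) ($\beta<0$) match parts c) and d) after the same bookkeeping. In every case the conclusion $\int\cdots\le C\,(1/x)^{-\tau}v(1/x)^{\beta}=C\,x^{\tau}w(x)^{\beta}$ for $1/x\in(0,1]$ and the limit $\lim_{1/x\to0}(1/x)^{-\tau}v(1/x)^{\beta}\in\{0,\infty\}$ translate back verbatim into the asserted estimates and limits as $x\to\infty$.

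The only delicate point, and hence the main thing to get right, is the sign bookkeeping: one must verify that each inequality among $\tau$, $\beta$ and the indices transforms correctly under $\tau\mapsto-\tau$ together with $(p_1^{v},q_1^{v})=(-q_2^{w},-p_2^{w})$, and that Lemma~\ref{lem:al1} is only invoked on the admissible range $1/x\in(0,1]$. No new analytic estimate is needed; the change of variables $t\mapsto1/t$ and Lemma~\ref{lem:al1} do all the work.
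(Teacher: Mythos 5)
Your proof is correct. The paper itself does not prove this lemma --- it is stated with a citation to \cite[Lemma 3]{MPh3} --- so there is no in-paper argument to compare against; your derivation via the substitution $t\mapsto 1/t$, which carries O-RV behaviour at infinity to O-RV behaviour at zero and then invokes Lemma \ref{lem:al1}, is the natural route, and the index bookkeeping ($p_1^v=-q_2$, $q_1^v=-p_2$, replacing $\tau$ by $-\tau$) matches part by part. One point worth making explicit, which you flag only in passing: the auxiliary function $v(s)=w(1/s)$ has negative lower and upper indices at zero, so you are applying Lemma \ref{lem:al1} outside the paper's standing convention $p_1,q_1>0$. This is legitimate because Lemma \ref{lem:al1} is an abstract Karamata-type integral estimate for O-RV functions whose proof in \cite{MF} places no sign restriction on the indices, but a reader following the paper's conventions could reasonably ask you to justify it.
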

\begin{lem}
\noindent \label{lem:index_a}(\cite[Lemma 5 ]{MPh3}) Let $w\left(r\right),r>0,$
be a continuous, non-decreasing O-RV function at zero and infinity
with indices $p_{1},q_{1},p_{2},q_{2}$ defined in (\ref{1}), (\ref{2}),
and $p_{1},p_{2}>0.$ Let 
\[
a_{1}\left(r\right)=\inf\left\{ t>0:w\left(t\right)\geq r\right\} ,r>0.
\]
\[
a_{2}\left(r\right)=\inf\left\{ t>0:w\left(t\right)>r\right\} ,r>0.
\]

\noindent If $a$ is either $a_{1}$ or $a_{2}$, then 

\noindent (i) $w\left(a\left(t\right)\right)=t,t>0$ and 
\[
a_{1}\left(w\left(t\right)\right)\leq t\leq a_{1}\left(w\left(t\right)+\right),t>0,
\]
\begin{align*}
a_{2}\left(w\left(t\right)-\right) & \leq t\leq a_{2}\left(w\left(t\right)\right),t>0.
\end{align*}

\noindent (ii) $a$ is O-RV at zero and infinity with lower indices
$p,\bar{p}$ and upper indices $q,\bar{q}$ respectively such that
\[
\frac{1}{q_{1}}\leq p\leq q\leq\frac{1}{p_{1}},\frac{1}{q_{2}}\leq\bar{p}\leq\bar{q}\leq\frac{1}{p_{2}}.
\]

\noindent (iii) $a\left(w\left(t\right)\right)\asymp t,t>0.$
\end{lem}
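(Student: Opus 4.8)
The plan is to handle the three parts in order, using only elementary properties of monotone generalized inverses together with Potter-type bounds for O-RV functions. First, for part (i) I would record that the hypotheses $p_1,p_2>0$ force $w(0+)=0$ and $w(\infty)=\infty$ (otherwise $w$ would be eventually constant near one endpoint, making $r_i$ identically $1$ there and the corresponding index vanish), so $w:(0,\infty)\to(0,\infty)$ is a continuous surjection. Then for $a=a_1$ the set $\{s:w(s)\ge t\}$ is a closed half-line $[a_1(t),\infty)$ with $a_1(t)>0$, and continuity gives $w(a_1(t))\ge t$ by evaluating at the endpoint and $w(a_1(t))\le t$ by letting $s\uparrow a_1(t)$; hence $w(a_1(t))=t$, and the analogous argument with the open half-line $\{s:w(s)>t\}$ handles $a_2$. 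The sandwich inequalities $a_1(w(t))\le t\le a_1(w(t)+)$ and $a_2(w(t)-)\le t\le a_2(w(t))$ are then immediate: $t$ lies in $\{s:w(s)\ge w(t)\}$, while for $r>w(t)$ the point $t$ is strictly to the left of $a_1(r)$, and symmetrically for $a_2$. I would also note here that $a$ is non-decreasing with $a(0+)=0$ and $a(\infty)=\infty$.

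For part (ii) the key input is the endpoint-localized Potter bound for O-RV functions: for any $\alpha_1>q_1$ and $\alpha_2<p_1$ there are $C\ge 1$, $r_0>0$ with $C^{-1}(y/x)^{\alpha_2}\le w(y)/w(x)\le C(y/x)^{\alpha_1}$ for $0<x\le y\le r_0$, and an analogous statement near infinity with $\alpha_1>q_2$, $\alpha_2<p_2$ (the local refinement of Lemma \ref{lem:alpha12_ratio}, cf.\ \cite{AA,BGT}). Fix $\lambda\ge1$; for $t$ small both $a(t)\le a(\lambda t)$ lie below $r_0$, and substituting $x=a(t)$, $y=a(\lambda t)$ with $w(a(\cdot))=\mathrm{id}$ gives $w(y)/w(x)=\lambda$, so the left Potter inequality yields $a(\lambda t)/a(t)\le(C\lambda)^{1/\alpha_2}$; taking $\limsup_{t\to0}$ gives $r_1^{a}(\lambda)\le(C\lambda)^{1/\alpha_2}$, hence $q\le 1/\alpha_2$, and letting $\alpha_2\uparrow p_1$ gives $q\le 1/p_1$. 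Repeating with $\lambda\le1$ (now $x=a(\lambda t)\le y=a(t)$) and the right Potter inequality gives $r_1^{a}(\lambda)\le(C\lambda)^{1/\alpha_1}$, hence $p\ge 1/\alpha_1$, and $\alpha_1\downarrow q_1$ gives $p\ge 1/q_1$. Together with $p\le q$ (a general O-RV fact, \cite{AA}) this is $1/q_1\le p\le q\le 1/p_1$; the infinity statement is identical using the Potter bounds for $w$ near infinity and $a(t)\to\infty$ as $t\to\infty$.

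For part (iii) I would first extract from (ii) a uniform doubling bound $a(2r)\le C a(r)$ for all $r>0$: O-RV of $a$ at zero gives $\limsup_{r\to0}a(2r)/a(r)<\infty$, hence the bound on $(0,r_0]$; O-RV at infinity gives it on $[r_1,\infty)$; and on $[r_0,r_1]$ it follows from monotonicity and positivity, $a(2r)\le a(2r_1)$ and $a(r)\ge a(r_0)>0$. Consequently $a_1(r+)\le a_1(2r)\le C a_1(r)$ and $a_2(r-)\ge a_2(r/2)\ge C^{-1}a_2(r)$, so combining with the sandwich inequalities from (i) gives $t\le a_1(w(t)+)\le C a_1(w(t))\le C t$ and $C^{-1}a_2(w(t))\le a_2(w(t)-)\le t\le a_2(w(t))$, i.e.\ $a(w(t))\asymp t$ in both cases.

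The main obstacle I anticipate lies entirely in part (ii): the paper's Lemma \ref{lem:alpha12_ratio} supplies only the \emph{global} Potter bound, whose exponents are governed by $q_1\vee q_2$ and $p_1\wedge p_2$, and feeding that in would yield only the non-sharp $q\le 1/(p_1\wedge p_2)$. One genuinely needs the endpoint-localized inequalities (exponents controlled by $q_1,p_1$ near zero and by $q_2,p_2$ near infinity) in order to decouple the two index pairs, and a small but necessary point is to verify that the arguments $a(\lambda t)$ remain inside the relevant localization window $(0,r_0]$ (resp.\ $[r_0',\infty)$) as $t\to0$ (resp.\ $t\to\infty$), which is exactly what $a(0+)=0$ and $a(\infty)=\infty$ guarantee.
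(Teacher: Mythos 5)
Your proposal is correct, and it supplies substantially more detail than the paper, whose ``proof'' of this lemma is essentially a pointer: part (i) is declared straightforward, part (ii) is cited to \cite[Lemma 5]{MPh3}, and part (iii) is said to follow from (ii) together with Lemma \ref{lem:alpha12_ratio}. So the right comparison is with what the paper intends rather than with a written argument.

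Your parts (i) and (iii) match the paper's intent. For (iii) the paper would apply Lemma \ref{lem:alpha12_ratio} to $a$ itself (legitimate once (ii) is known, since $a$ then has positive lower indices), which directly gives $a(y)/a(x)\le c\,(y/x)^{\alpha_1}$ for all $0<x\le y$; taking $y\downarrow w(t)$ yields $a_1(w(t)+)\le c\,a_1(w(t))$, and combining with the sandwich from (i) finishes. Your uniform doubling bound $a(2r)\le C a(r)$ accomplishes the same thing a touch more elementarily (two $\limsup$s plus compactness on a middle interval), and your handling of the one-sided limits $a_1(w(t)+)$, $a_2(w(t)-)$ is clean and correct. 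Your preliminary observations that $p_1,p_2>0$ force $w(0+)=0$, $w(\infty)=\infty$, and hence $a(0+)=0$, $a(\infty)=\infty$, are the right prerequisites and are genuinely needed in (ii) and (iii).

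Your part (ii) is the one place where you actually prove something the paper only cites. Your diagnosis is correct: Lemma \ref{lem:alpha12_ratio} as stated mixes the zero and infinity indices (its exponents are $\alpha_1>q_1\vee q_2$ and $\alpha_2<p_1\wedge p_2$), and plugging it in only gives $q\le 1/(p_1\wedge p_2)$, not the decoupled bounds in the statement. The endpoint-localized Potter inequalities — $w(y)/w(x)$ controlled by $(y/x)^{\alpha}$ with $\alpha$ sandwiched between $p_1,q_1$ for $x\le y\le r_0$, and between $p_2,q_2$ for $r_0'\le x\le y$ — are exactly what is needed, and they are standard for O-RV functions (this is what \cite{AA,BGT} provide and what \cite[Lemma 5]{MPh3} uses). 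Your inversion trick, substituting $x=a(t)$, $y=a(\lambda t)$ and using $w(a(\cdot))=\mathrm{id}$ so that $w(y)/w(x)=\lambda$ exactly, is the natural route and is carried out correctly in all four cases ($\lambda\gtrless 1$, at $0$ and at $\infty$). Your remark that $a(0+)=0$, $a(\infty)=\infty$ keep the arguments inside the localization windows is a small but real point that needs to be said.

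In short: this is a correct, self-contained proof of a statement the paper delegates to a reference; for (iii) it is essentially the paper's intended argument with doubling in place of a direct Potter bound on $a$, and for (ii) it fills in a proof the paper does not give, correctly flagging that the paper's own Lemma \ref{lem:alpha12_ratio} is insufficient as stated and must be replaced by its endpoint-localized form.
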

\begin{proof}
(i) is straightforward, (ii) is proved in \cite[Lemma 5 ]{MPh3} and
(iii) easily follows from (ii) and Lemma \ref{lem:alpha12_ratio}.
\end{proof}
\begin{cor}
\noindent \label{cor:aymp_a}(\cite[Corollary 1]{MPh3}) Let $w\left(r\right),r>0,$
be a continuous, non-decreasing O-RV function at zero and infinity
with indices $p_{1},q_{1},p_{2},q_{2}$ defined in (\ref{1}), (\ref{2}),
and $p_{1},p_{2}>0.$ Let 
\[
a_{1}\left(r\right)=\inf\left\{ t>0:w\left(t\right)\geq r\right\} ,r>0.
\]
\[
a_{2}\left(r\right)=\inf\left\{ t>0:w\left(t\right)>r\right\} ,r>0.
\]

\noindent If $a$ is either $a_{1}$ or $a_{2}$, then 

\noindent (i) For any $\beta>0$ and $\tau<\frac{\beta}{q_{1}}\wedge\frac{\beta}{q_{2}}$
there is $C>0$ such that 
\begin{eqnarray*}
\int_{0}^{r}t^{-\tau}a\left(t\right)^{\beta}\frac{dt}{t} & \leq & Cr^{-\tau}a\left(r\right)^{\beta},r>0,\\
\lim_{r\rightarrow0}r^{-\tau}a\left(r\right)^{\beta} & = & 0,\lim_{r\rightarrow\infty}r^{-\tau}a\left(r\right)^{\beta}=\infty,
\end{eqnarray*}
and for any $\beta<0,\tau>\left(-\frac{\beta}{p_{1}}\right)\vee\left(-\frac{\beta}{p_{2}}\right)$
there is $C>0$ such that 
\begin{eqnarray*}
\int_{0}^{r}t^{\tau}a\left(t\right)^{\beta}\frac{dt}{t} & \leq & Cr^{\tau}a\left(r\right)^{\beta},r>0,\\
\lim_{r\rightarrow0}r^{\tau}a\left(r\right)^{\beta} & = & 0,\lim_{r\rightarrow\infty}r^{\tau}a\left(r\right)^{\beta}=\infty.
\end{eqnarray*}
(ii) For any $\gamma>0$ and $\tau>\frac{\gamma}{p_{1}}\vee\frac{\gamma}{p_{2}}$
there is $C>0$ such that 
\begin{eqnarray*}
\int_{r}^{\infty}t^{-\tau}a\left(t\right)^{\gamma}\frac{dt}{t} & \leq & Cr^{-\tau}a\left(r\right)^{\gamma},r>0,\\
\lim_{r\rightarrow0}r^{-\tau}a\left(r\right)^{\gamma} & = & \infty,\lim_{r\rightarrow\infty}r^{-\tau}a\left(r\right)^{\gamma}=0,
\end{eqnarray*}
and for any $\gamma<0$ and $\tau<\left(-\frac{\gamma}{q_{1}}\right)\wedge\left(-\frac{\gamma}{q_{2}}\right)$
there is $C>0$ such that 
\begin{eqnarray*}
\int_{r}^{\infty}t^{\tau}a\left(t\right)^{\gamma}\frac{dt}{t} & \leq & Cr^{\tau}a\left(r\right)^{\gamma},r>0,\\
\lim_{r\rightarrow0}r^{\tau}a\left(r\right)^{\gamma} & = & \infty,\lim_{r\rightarrow\infty}r^{\tau}a\left(r\right)^{\gamma}=0.
\end{eqnarray*}
\end{cor}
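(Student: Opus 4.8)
The plan is to deduce the statement from Lemmas~\ref{lem:al1} and \ref{lem:al2}, applied not to $w$ itself but to its generalized inverse $a$. First I would record, via Lemma~\ref{lem:index_a}(ii), that $a$ (whether $a_1$ or $a_2$) is a non-decreasing O-RV function both at zero and at infinity, with lower indices $p,\bar p$ and upper indices $q,\bar q$ obeying
\[
\frac{1}{q_1}\le p\le q\le\frac{1}{p_1},\qquad\frac{1}{q_2}\le\bar p\le\bar q\le\frac{1}{p_2};
\]
in particular $p,\bar p>0$, so Lemmas~\ref{lem:al1}, \ref{lem:al2} and also Lemma~\ref{lem:alpha12_ratio} are available with $w$ replaced by $a$. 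I would also note that $a>0$ on $(0,\infty)$ (because $w(0+)=0$, the lower index of $w$ at zero being positive), hence $r\mapsto r^{\pm\tau}a(r)^\beta$ is bounded below by a positive constant on each compact subinterval of $(0,\infty)$; this small fact is what allows the two half-line estimates below to be spliced into one valid for all $r>0$.

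Next I would treat the four cases one at a time, each by the same two steps: estimate the part of the integral over $(0,1]$ by the matching clause of Lemma~\ref{lem:al1} applied to $a$, estimate the part over $[1,\infty)$ by the matching clause of Lemma~\ref{lem:al2} applied to $a$, then splice (absorbing the leftover constant $\int_0^1$ or $\int_1^\infty$ using the lower bound on $r^{\pm\tau}a(r)^\beta$). For the first bound of~(i), say ($\beta>0$, $\tau<\frac{\beta}{q_1}\wedge\frac{\beta}{q_2}$): Lemma~\ref{lem:al1}(a) for $a$ with time-exponent $-\tau$ and $a$-exponent $\beta$ has hypothesis $-\tau>-\beta p$, which holds since $\beta p\ge\beta/q_1>\tau$, and gives $\int_0^r t^{-\tau}a(t)^\beta\,\frac{dt}{t}\le Cr^{-\tau}a(r)^\beta$ for $r\le1$ together with $\lim_{r\to0}r^{-\tau}a(r)^\beta=0$; for $r>1$, Lemma~\ref{lem:al2}(b) for $a$ controls $\int_1^r$ under the hypothesis $\tau<\beta\bar p$ (which follows from $\bar p\ge1/q_2$) and gives $\lim_{r\to\infty}r^{-\tau}a(r)^\beta=\infty$. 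The other cases only change which clause is used: for $\beta<0$ in~(i), Lemma~\ref{lem:al1}(c) on $(0,1]$ and Lemma~\ref{lem:al2}(d) on $[1,\infty)$, with $\tau>(-\frac{\beta}{p_1})\vee(-\frac{\beta}{p_2})$ becoming $\tau>-\beta q$ and $\tau>-\beta\bar q$ via $q\le1/p_1$, $\bar q\le1/p_2$; for~(ii) with $\gamma>0$, Lemma~\ref{lem:al1}(b) on $(0,1]$ and Lemma~\ref{lem:al2}(a) on $[1,\infty)$ (after writing $\int_r^\infty=\int_r^1+\int_1^\infty$ when $r<1$), with $\tau>\frac{\gamma}{p_1}\vee\frac{\gamma}{p_2}$ becoming $\tau>\gamma q$ and $\tau>\gamma\bar q$; and for~(ii) with $\gamma<0$, Lemma~\ref{lem:al1}(d) and Lemma~\ref{lem:al2}(c), with $\tau<(-\frac{\gamma}{q_1})\wedge(-\frac{\gamma}{q_2})$ becoming $\tau<-\gamma p$ and $\tau<-\gamma\bar p$. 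In each case the limit assertions of the corollary are precisely those supplied by the cited clauses.

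An equivalent route, bypassing the splitting, is to use only the global two-sided bound of Lemma~\ref{lem:alpha12_ratio} for $a$: for $0<t\le r$ it bounds $a(t)^\beta/a(r)^\beta$ above by a fixed power of $t/r$ (the exponent involving a freely chosen $\alpha_2\in(0,p\wedge\bar p)$ if $\beta>0$, or $\alpha_1>q\vee\bar q$ if $\beta<0$), and substituting into $\int_0^r t^{\pm\tau}a(t)^\beta\,\frac{dt}{t}$ leaves an elementary power integral that converges and equals the claimed right-hand side up to a constant exactly when the hypothesis on $\tau$, together with $p\ge1/q_1$, $\bar p\ge1/q_2$, $q\le1/p_1$, $\bar q\le1/p_2$, keeps the window for $\alpha_1,\alpha_2$ nonempty; the $\int_r^\infty$ estimates follow the same way using $t\ge r$, and the limits by taking $t=1$ in the two-sided bound. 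Either way, the only real obstacle is bookkeeping — keeping every inequality pointed the right direction across the four sign patterns and the two integration ranges, and checking in each instance that the stated restriction on $\tau$ really lands us inside the hypotheses of the relevant clause; there is no analytic difficulty beyond that.
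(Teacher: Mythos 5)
The paper does not prove this corollary — it is cited directly from \cite[Corollary 1]{MPh3} with no argument supplied — so there is no in-paper proof to compare against. Your derivation is nevertheless sound and is the natural one: transfer the index information from $w$ to $a$ via Lemma~\ref{lem:index_a}(ii), then invoke the matching clauses of Lemmas~\ref{lem:al1} and \ref{lem:al2} with $w$ replaced by $a$, splicing the $(0,1]$ and $[1,\infty)$ bounds. All eight clause-selections and all eight hypothesis translations (for example $-\tau>-\beta p$ from $\tau<\beta/q_1$ together with $p\ge 1/q_1$) are correct, and the limit assertions indeed fall out of the same clauses.

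Two points are worth making a bit more explicit than your sketch. First, the splicing needs a uniform (not merely compact-subinterval) positive lower bound for $r^{\pm\tau}a(r)^\beta$ on the half-line away from the endpoint where the integral is based; your appeal to the cited limit ($\to\infty$) is the right idea, but the cleanest closure is via Lemma~\ref{lem:alpha12_ratio} applied to $a$: pick $\alpha_2\in(\tau/\beta,\ p\wedge\bar p)$ (nonempty precisely because of the hypothesis on $\tau$), so that $a(r)^\beta\ge c\,a(1)^\beta r^{\beta\alpha_2}$ for $r\ge1$ and hence $r^{-\tau}a(r)^\beta\ge c\,a(1)^\beta r^{\beta\alpha_2-\tau}\ge c\,a(1)^\beta$. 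Second, your ``equivalent route'' via Lemma~\ref{lem:alpha12_ratio} alone is in fact slightly cleaner, since it gives the global estimate in one stroke without case-splitting at $r=1$; it also makes the nonemptiness of the exponent window $(\tau/\beta,\ p\wedge\bar p)$ (or its analogues) the single thing that needs checking in each of the four sign patterns. Either route is a complete proof.
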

\noindent 
\begin{lem}
\noindent \label{lem:gen-by-part} Let $\nu\in\mathfrak{A}^{\sigma}$,$\sigma\in\left[1,2\right)$,
$w=w_{\nu}$ be an O-RV function at zero and infinity with indices
$p_{1},q_{1},p_{2},q_{2}$ defined in (\ref{1}), (\ref{2}) and $p_{1},p_{2}>0$.
If $\frac{\sigma-1}{\left(p_{1}\wedge p_{2}\right)}<\beta<1$ then
there exists $N=N\left(\beta,\nu\right)>0$ such that
\[
\int_{\left|y\right|\leq1}\left|y\right|w\left(\left|y\right|\right)^{\beta}\nu\left(dy\right)\leq N.
\]
\end{lem}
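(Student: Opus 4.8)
The plan is to dominate the weight $w(|y|)^{\beta}$ by a pure power of $|y|$ on the unit ball and then invoke the defining property of $\sigma$. The starting point is Lemma~\ref{lem:alpha12_ratio}: for any $\alpha_2\in(0,p_1\wedge p_2)$ the two-sided bound, applied with $x=r$ and $y=1$, yields a constant $C=C(\alpha_2,\nu)>0$ with $w(r)\le C\,r^{\alpha_2}$ for all $r\in(0,1]$. Since $\beta>0$, raising this to the power $\beta$ gives $|y|\,w(|y|)^{\beta}\le C^{\beta}\,|y|^{1+\alpha_2\beta}$ for $0<|y|\le 1$, and hence
\[
\int_{|y|\le 1}|y|\,w(|y|)^{\beta}\,\nu(dy)\le C^{\beta}\int_{|y|\le 1}|y|^{1+\alpha_2\beta}\,\nu(dy).
\]

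The point is to choose $\alpha_2$ so that the exponent $\gamma:=1+\alpha_2\beta$ strictly exceeds $\sigma$. The hypothesis $\beta>(\sigma-1)/(p_1\wedge p_2)$ (note $\beta<1$ forces $\sigma-1<p_1\wedge p_2$) guarantees that the interval $\bigl((\sigma-1)/\beta,\ p_1\wedge p_2\bigr)$ is nonempty and, since $\sigma\ge 1$, contained in $[0,\infty)$; any $\alpha_2$ in this interval is admissible for Lemma~\ref{lem:alpha12_ratio} (being $<p_1\wedge p_2$) and satisfies $\alpha_2\beta>\sigma-1$, i.e.\ $\gamma>\sigma$. When $\sigma=1$ every $\alpha_2\in(0,p_1\wedge p_2)$ works.

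It then remains to check $\int_{|y|\le 1}|y|^{\gamma}\,\nu(dy)<\infty$ for $\gamma>\sigma$. If $\gamma<2$, this follows from the definition of $\sigma$: there is $\gamma'\in(\sigma,\gamma)$ with $\int_{|y|\le 1}|y|^{\gamma'}\nu(dy)<\infty$, and $|y|^{\gamma}\le|y|^{\gamma'}$ on $\{|y|\le 1\}$. If $\gamma\ge 2$, it follows from $|y|^{\gamma}\le|y|^{2}$ on $\{|y|\le 1\}$ together with $\int(|y|^{2}\wedge 1)\,\nu(dy)<\infty$, which holds for every $\nu\in\mathfrak{A}^{\sigma}$. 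Setting $N=C^{\beta}\int_{|y|\le 1}|y|^{1+\alpha_2\beta}\,\nu(dy)$, a quantity depending only on $\beta$ and $\nu$, completes the argument. There is no substantial obstacle; the one thing to verify carefully is that the admissible range of $\alpha_2$ is nonempty, which is precisely where the hypothesis on $\beta$ enters.
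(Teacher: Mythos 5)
Your proof is correct and takes essentially the same route as the paper: bound $w(r)^{\beta}\le C r^{\alpha\beta}$ on $(0,1]$ via Lemma~\ref{lem:alpha12_ratio}, pick $\alpha\in\bigl((\sigma-1)/\beta,\,p_1\wedge p_2\bigr)$ (nonempty by the hypothesis on $\beta$) so that $1+\alpha\beta>\sigma$, and invoke the definition of $\sigma$. The only additional (unnecessary) care you take is the $\gamma\ge 2$ case; since $\alpha$ can always be taken close enough to $(\sigma-1)/\beta$ to keep $1+\alpha\beta<2$, this case never actually arises.
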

\begin{proof}
Since $1+\left(p_{1}\wedge p_{2}\right)\beta>\sigma$, we may choose
$0<\alpha<\left(p_{1}\wedge p_{2}\right)$ so that $1+\alpha\beta>\sigma.$
By Lemma \ref{lem:alpha12_ratio} and definition of $\sigma$, 
\begin{align*}
\int_{\left|y\right|\leq1}\left|y\right|w\left(\left|y\right|\right)^{\beta}\nu\left(dy\right) & \leq N\int_{\left|y\right|\leq1}\left|y\right|^{1+\alpha\beta}\nu\left(dy\right)\leq N.
\end{align*}
\end{proof}
\noindent 
\begin{lem}
\noindent \label{lem:w=00003Dgamma} Let $\nu\left(dy\right)=j_{d}\left(\left|y\right|\right)dy$
and \textbf{D }hold then 

\noindent 
\[
w_{\nu}\left(r\right)\asymp\left(\int_{r}^{\infty}\frac{1}{\gamma\left(s\right)s}ds\right)^{-1}\asymp\gamma\left(r\right),r>0.
\]
\end{lem}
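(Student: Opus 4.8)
The plan is to reduce the statement to an elementary computation with the radial profile $j_{d}$ and then invoke the asymptotic‑integral estimates for O‑RV functions collected in the Appendix.

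First I would pass from $w_{\nu}$ to an integral of $j_{d}$. Since $\nu(dy)=j_{d}(\left|y\right|)\,dy$ is radial, writing $\sigma_{d-1}$ for the surface measure of $S_{d-1}$ we have $\delta_{\nu}(r)=\sigma_{d-1}\int_{r}^{\infty}j_{d}(s)s^{d-1}\,ds$. Assumption \textbf{D}(ii) with $n=0$ gives $c_{1}s^{-d}\gamma(s)^{-1}\le j_{d}(s)\le c_{2}s^{-d}\gamma(s)^{-1}$, hence $c_{1}s^{-1}\gamma(s)^{-1}\le j_{d}(s)s^{d-1}\le c_{2}s^{-1}\gamma(s)^{-1}$; integrating over $(r,\infty)$ yields
\[
\delta_{\nu}(r)\asymp\int_{r}^{\infty}\frac{ds}{s\gamma(s)},\qquad r>0,
\]
and taking reciprocals gives the first asserted equivalence $w_{\nu}(r)\asymp\bigl(\int_{r}^{\infty}\tfrac{ds}{s\gamma(s)}\bigr)^{-1}$. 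The integral is finite for every $r>0$ because $\gamma$ is O‑RV at infinity with positive lower index (cf.\ Lemma \ref{lem:alpha12_ratio}), so the right‑hand side is well defined.

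Second I would prove $\int_{r}^{\infty}\frac{ds}{s\gamma(s)}\asymp\gamma(r)^{-1}$ for all $r>0$. For the upper bound, apply Lemma \ref{lem:al2}(c) with $w=\gamma$, $\beta=-1$, $\tau=0$ (admissible since $p_{2}>0$) to get $\int_{x}^{\infty}\gamma(t)^{-1}\tfrac{dt}{t}\le C\gamma(x)^{-1}$ for $x\ge1$, and Lemma \ref{lem:al1}(d) with the same parameters (admissible since $p_{1}>0$) to get $\int_{x}^{1}\gamma(t)^{-1}\tfrac{dt}{t}\le C\gamma(x)^{-1}$ for $x\le1$; splitting $\int_{r}^{\infty}=\int_{r}^{1}+\int_{1}^{\infty}$ when $r\le1$ and using monotonicity of $\gamma$ to absorb the fixed number $\int_{1}^{\infty}\gamma(t)^{-1}t^{-1}\,dt$ into $\gamma(r)^{-1}$ (since $\gamma(r)\le\gamma(1)$ there) gives $\int_{r}^{\infty}\frac{ds}{s\gamma(s)}\le C\gamma(r)^{-1}$ for all $r>0$. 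For the lower bound, monotonicity of $\gamma$ together with the doubling estimate $\gamma(2r)\le c\,\gamma(r)$ (a special case of Lemma \ref{lem:alpha12_ratio}) yields
\[
\int_{r}^{\infty}\frac{ds}{s\gamma(s)}\ge\int_{r}^{2r}\frac{ds}{s\gamma(s)}\ge\frac{1}{\gamma(2r)}\int_{r}^{2r}\frac{ds}{s}=\frac{\ln 2}{\gamma(2r)}\ge\frac{\ln 2}{c}\,\gamma(r)^{-1}.
\]
Chaining the two equivalences completes the proof.

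I do not anticipate a genuine obstacle; the only points needing care are verifying the sign conditions in Lemmas \ref{lem:al1}--\ref{lem:al2} (all of which reduce to $p_{1},p_{2}>0$, guaranteed because $\gamma$ satisfies Assumption \textbf{A}) and correctly handling the fixed constant $\int_{1}^{\infty}\gamma(t)^{-1}t^{-1}\,dt$ in the regime $r\le1$, which is immediate from $\gamma(r)\le\gamma(1)$.
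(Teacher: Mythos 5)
Your argument is correct, and it takes a genuinely different (and arguably cleaner) route for the second equivalence $\int_{r}^{\infty}\frac{ds}{s\gamma(s)}\asymp\gamma(r)^{-1}$. The paper derives both the upper and the lower bound at once by invoking the external Tauberian-type estimate \cite[Theorem 3]{AA} applied to $\gamma^{-1}$, which only gives the two-sided bound for $r\ge N$; it then has to extend to all $r>0$ by a case split, a substitution $s\mapsto s^{-1}$ bringing in the auxiliary O-RV function $\gamma(x^{-1})^{-1}$, and repeated use of Lemma~\ref{lem:alpha12_ratio}. You instead obtain the upper bound directly from the in-paper integral estimates: Lemma~\ref{lem:al2}(c) with $(\beta,\tau)=(-1,0)$ controls $\int_{x}^{\infty}$ for $x\ge1$, Lemma~\ref{lem:al1}(d) with the same parameters controls $\int_{x}^{1}$ for $x\le1$ (both admissible because $\gamma$ satisfies Assumption~\textbf{A}, so $p_{1},p_{2}>0$), and the fixed tail $\int_{1}^{\infty}\gamma(t)^{-1}t^{-1}dt$ is absorbed using monotonicity $\gamma(r)\le\gamma(1)$ for $r\le1$. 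Your lower bound is an elementary one-line computation: restrict the integral to $[r,2r]$, use that $\gamma$ is non-decreasing, and apply the uniform doubling $\gamma(2r)\le c_{2}2^{\alpha_{1}}\gamma(r)$ from Lemma~\ref{lem:alpha12_ratio}. This avoids the external reference, avoids the change of variables and the auxiliary function $\gamma(x^{-1})^{-1}$ entirely, and is uniform in $r>0$ from the start rather than patched together over three ranges; the cost is simply that you must treat the two directions of the inequality separately, whereas \cite[Theorem 3]{AA} delivers both simultaneously on a half-line. Your first equivalence (integrating $j_{d}$ in polar coordinates and invoking Assumption~\textbf{D}(ii) with $n=0$) is exactly what the paper means by "follows from the definition of $w_{\nu}$"; spelling it out is harmless and helpful.
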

\begin{proof}
\noindent The first estimate follows from the definition of $w_{\nu}.$
The second estimate follows from basic properties of O-RV functions.
With definition of O-RV functions in mind, we consider the function
$\gamma^{-1}$, 
\[
\limsup_{\epsilon\rightarrow\infty}\frac{\gamma\left(\epsilon x\right)^{-1}}{\gamma\left(\epsilon\right)^{-1}}=\limsup_{\epsilon\rightarrow\infty}\frac{\gamma\left(\epsilon\right)}{\gamma\left(\epsilon x\right)}=\limsup_{\epsilon\rightarrow\infty}\frac{\gamma\left(x^{-1}\epsilon x\right)}{\gamma\left(\epsilon x\right)}=r_{2}\left(x^{-1}\right).
\]

\noindent Hence, $\gamma{}^{-1}$ is an O-RV function at infinity
with the lower index of 

\noindent 
\[
p=\lim_{\epsilon\rightarrow0}\frac{\log r_{2}\left(\epsilon^{-1}\right)}{\log\epsilon}=-\lim_{\epsilon\rightarrow\infty}\frac{\log r_{2}\left(\epsilon\right)}{\log\epsilon}=-q_{2}
\]
and by a similar calculation the upper index is 
\[
q=-p_{2}.
\]

Similarly, we consider the function $\gamma\left(x^{-1}\right)^{-1},x>0,$
\[
\limsup_{\epsilon\rightarrow\infty}\frac{\gamma^{-1}\left(\frac{1}{\epsilon x}\right)}{\gamma^{-1}\left(\frac{1}{\epsilon}\right)}=\limsup_{\epsilon\rightarrow0}\frac{\gamma\left(\epsilon\right)}{\gamma\left(\epsilon x^{-1}\right)}=\limsup_{\epsilon\rightarrow0}\frac{\gamma\left(\epsilon x^{-1}x\right)}{\gamma\left(\epsilon x^{-1}\right)}=r_{2}\left(x\right).
\]

\noindent Therefore, the function $\gamma\left(x^{-1}\right)^{-1},x>0$
is an O-RV function at infinity with 
\[
p=p_{2},q=q_{2}.
\]

\noindent By applying \cite[Theorem 3]{AA} with $\gamma{}^{-1}$,
there exists $N>1$ so that for $r\geq N$,
\begin{equation}
\int_{r}^{\infty}\frac{1}{\gamma\left(s\right)s}ds\asymp\gamma\left(r\right)^{-1}.\label{eq:r>=00003DN}
\end{equation}

Now we extend the estimate to $r\in\left[1,N\right)$. On one hand,
by (\ref{eq:r>=00003DN}) and Lemma \ref{lem:alpha12_ratio},
\begin{align*}
\int_{r}^{\infty}\frac{1}{\gamma\left(s\right)s}ds & =\int_{r}^{N}\frac{1}{\gamma\left(s\right)s}ds+\int_{N}^{\infty}\frac{1}{\gamma\left(s\right)s}ds\\
 & \leq C\gamma\left(r\right)^{-1}\ln\left(N\right)+C\gamma\left(N\right)^{-1}\\
 & \leq C\gamma\left(r\right)^{-1}.
\end{align*}

\noindent On the other hand, by Lemma \ref{lem:alpha12_ratio},
\begin{align*}
\int_{r}^{\infty}\frac{1}{\gamma\left(s\right)s}ds & \geq\int_{N}^{\infty}\frac{1}{\gamma\left(s\right)s}ds\geq C\gamma\left(N\right)^{-1}\\
 & \geq C\gamma\left(r\right)^{-1}.
\end{align*}

\noindent Therefore,
\begin{equation}
\int_{r}^{\infty}\frac{1}{\gamma\left(s\right)s}ds\asymp\gamma\left(r\right)^{-1},r\geq1.\label{eq:r>=00003D1}
\end{equation}

For $r<1$, we first note that by applying \cite[Theorem 3]{AA} with
$\gamma\left(x^{-1}\right)^{-1},x>0$, there exists $N>1$ so that
for $x\geq N$. 
\begin{equation}
\int_{1}^{x}\frac{1}{\gamma\left(s^{-1}\right)s}ds\asymp\gamma\left(x^{-1}\right)^{-1}.\label{eq:l_2}
\end{equation}

For $0<r\leq\frac{1}{N}$, we change the variable of integration then
apply (\ref{eq:l_2}), (\ref{eq:r>=00003D1}) and Lemma \ref{lem:alpha12_ratio},

\noindent 
\begin{align*}
\int_{r}^{\infty}\frac{1}{\gamma\left(s\right)s}ds & =\int_{r}^{1}\frac{1}{\gamma\left(s\right)s}ds+\int_{1}^{\infty}\frac{1}{\gamma\left(s\right)s}ds\\
 & =\int_{1}^{1/r}\frac{1}{\gamma\left(s^{-1}\right)s}ds+\int_{1}^{\infty}\frac{1}{\gamma\left(s\right)s}ds\\
 & \leq C\gamma\left(r\right)^{-1}.
\end{align*}

\noindent Moreover, 
\[
\int_{r}^{\infty}\frac{1}{\gamma\left(s\right)s}ds\geq\int_{1}^{1/r}\frac{1}{\gamma\left(s^{-1}\right)s}ds\geq C\gamma\left(r\right)^{-1}.
\]

\noindent Therefore,
\begin{equation}
\int_{r}^{\infty}\frac{1}{\gamma\left(s\right)s}ds\asymp\gamma\left(r\right)^{-1},r\in\left(0,\frac{1}{N}\right].\label{eq:r<=00003D1/N}
\end{equation}

\noindent Finally, the estimate for $r\in\left[\frac{1}{N},1\right)$
follows from (\ref{eq:r>=00003D1}), (\ref{eq:r<=00003D1/N}) and
Lemma \ref{lem:alpha12_ratio}.
\end{proof}

\end{document}